\providecommand{\U}[1]{\protect\rule{.1in}{.1in}}
\let\oldtocsubsection=\tocsubsection
\let\oldtocsubsubsection=\tocsubsubsection
\renewcommand{\tocsubsection}[2]{\hspace{1em}\oldtocsubsection{#1}{#2}}
\renewcommand{\tocsubsubsection}[2]{\hspace{2em}\oldtocsubsubsection{#1}{#2}}
\theoremstyle{plain}
\newtheorem {theorem }{Theorem}
\newtheorem{lemma}{Lemma}[section]
\newtheorem{theorem}[lemma]{Theorem}
\newtheorem{corollary}[lemma]{Corollary}
\newtheorem {conjecture }{Conjecture}
\newtheorem{thm}[lemma]{Theorem}
\newtheorem{prop}[lemma]{Proposition}
\newtheorem{lem}[lemma]{Lemma}
\newtheorem{cor}[lemma]{Corollary}
\newtheorem{introtheorem}{Theorem}
\newtheorem{introthm}[introtheorem]{Theorem}
\newtheorem{introprop}[introtheorem]{Proposition}
\theoremstyle{definition}
\newtheorem{definition}[lemma]{Definition}
\newtheorem{defn}[lemma]{Definition}
\newtheorem{exm}[lemma]{Example}
\newtheorem{example}[lemma]{Example}
\newtheorem {example }{Example}
\theoremstyle{remark}
\newtheorem {remark }{Remark}
\newtheorem{remark}[lemma]{Remark}
\sloppy \theoremstyle{plain}
\newcommand{\ind}{\operatorname{ind}}
\newcommand{\Rep}{\operatorname{Rep}}
\newcommand{\Hom}{\operatorname{Hom}}
\newcommand{\oH}{\operatorname{H}}
\newcommand{\Ind}{\operatorname{Ind}}
\renewcommand{\Im}{\operatorname{Im}}
\newcommand{\Ker}{\operatorname{Ker}}
\newcommand{\N}{{\mathbb N}}
\newcommand{\Q}{{\mathbb Q}}
\newcommand{\R}{{\mathbb R}}
\newcommand{\C}{{\mathbb C}}
\newcommand{\Aut}{{\operatorname{Aut}}}
\newcommand{\End}{\operatorname{End}}
\newcommand{\Exp}{\operatorname{Exp}}
\newcommand{\Id}{\operatorname{Id}}
\newcommand{\bC}{{\mathbb C}}
\newcommand{\alp}{{\alpha}}
\newcommand{\Fre}{{Fr\'{e}chet\,}}
\newcommand{\cA}{{\mathcal{A}}}
\newcommand{\cF}{{\mathcal{F}}}
\newcommand{\fg}{{\mathfrak{g}}}
\newcommand{\fh}{{\mathfrak{h}}}
\newcommand{\fn}{{\mathfrak{n}}}
\newcommand{\fu}{{\mathfrak{u}}}
\newcommand{\fv}{{\mathfrak{v}}}
\newcommand{\fz}{{\mathfrak{z}}}
\newcommand{\z}{{\mathfrak{z}}}
\newcommand{\GL}{\operatorname{GL}}
\newcommand{\Sp}{\operatorname{Sp}}
\newcommand{\sll}{{\mathfrak{sl}}}
\newcommand{\Mp}{\widetilde{\Sp}}
\newcommand{\Sc}{{\mathcal S}}
\newcommand{\Lie}{\operatorname{Lie}}
\newcommand{\hot}{\widehat{\otimes}}
\newcommand{\cM}{\mathcal{M}}
\newcommand{\fp}{\mathfrak{p}}
\newcommand{\fl}{\mathfrak{l}}
\newcommand{\fm}{\mathfrak{m}}
\newcommand{\fr}{\mathfrak{r}}
\newcommand{\cW}{\mathcal{W}}
\newcommand{\Dima}[1]{{{#1}}}
\newcommand{\Raul}[1]{{{#1}}}
\newcommand{\SSc}[1]{{{#1}}}
\newcommand{\DimaA}[1]{{{#1}}}
\newcommand{\DimaB}[1]{{{#1}}}
\newcommand{\onto}{{\twoheadrightarrow}}
\newcommand{\ot}{\leftarrow}
\newcommand{\supp}{\operatorname{supp}}
\newcommand{\WO}{\operatorname{WO}}
\renewcommand{\sl}{\mathfrak{sl}}
\renewcommand{\mod}{\operatorname{Mod}}
\newcommand{\htimes}{\hat\otimes}
\newcommand{\cU}{\mathcal{U}}
\newcommand{\pro}{\mathrm{pro}}
\newcommand{\WS}{\operatorname{WS}}
\begin{document}

\begin{abstract} The classical Stone-von Neuman theorem relates the irreducible unitary representations of the Heisenberg group $H_n$ to non-trivial unitary characters of its center $Z$, and plays a crucial role in the construction of the oscillator representation for the metaplectic group.

In this paper we  extend  these ideas  to non-unitary and non-irreducible representations, thereby obtaining an equivalence of categories between certain representations of $Z$ and those of $H_n$. Our main result is a smooth equivalence, which involves the fundamental ideas of du Cloux on differentiable representations and smooth imprimitivity systems for Nash groups. We show how to extend the oscillator representation to the smooth setting and give an application to degenerate Whittaker models for representations of reductive groups.

We also include  an algebraic equivalence, which can be regarded as a generalization of Kashiwara's lemma from the theory of $D$-modules.

\end{abstract}
\title[Stone-von Neumann equivalence for smooth representations]{A Stone-von Neumann equivalence of categories for smooth representations of the Heisenberg group}
\keywords{Heisenberg group, oscillator representation, metaplectic group, Weil representation, imprimitivity theorem, Schwartz function,   nilpotent orbit, degenerate Whittaker model.
2020 MS Classification:         20G05, 20G20, 22E25, 22E27, 22E45, 22E50,  17B08.
}
\author{Raul Gomez}
\address{Raul Gomez, UANL FCFM Av. Universidad, San Nicolas de los Garza, N.L., Mexico}
\email{raul.gomez.rgm@gmail.com}

\author{Dmitry Gourevitch}
\address{Dmitry Gourevitch,
Faculty of Mathematics and Computer Science,
Weizmann Institute of Science,
234 Herzl Street, Rehovot 7610001 Israel}
\email{dmitry.gourevitch@weizmann.ac.il}
\urladdr{\url{http://www.wisdom.weizmann.ac.il/~dimagur}}

\author{Siddhartha Sahi}
\address{Siddhartha Sahi, Department of Mathematics, Rutgers University, Hill Center -
Busch Campus, 110 Frelinghuysen Road Piscataway, NJ 08854-8019, USA}
\email{sahi@math.rugers.edu}
\date{\today}
\maketitle

\section{Introduction}

The Heisenberg group $H=H_n$ plays an important role in many areas of mathematics and physics. It is the extension of a $2n$-dimensional symplectic vector space $W=W_{2n}$ by a one dimensional center $Z$. A fundamental result, due to Stone and von Neumann, relates characters of $Z$ to representations of $H$. More precisely if $\chi$ is a non-trivial unitary character of $Z$ then there is, up to equivalence, a unique irreducible unitary representation $\pi=\pi_\chi$ of $H$ in which $Z$ acts by the character $\chi$. The symplectic group $\Sp$ acts on $H$ by automorphisms and the Stone-von-Neumann theorem plays a crucial role in the construction of the Segal-Shale-Weil metaplectic representation of the double cover $\widetilde{\Sp}$,  indeed of the Fourier-Jacobi group $\widetilde{\Sp}\ltimes H$. This in turn is very important in the theory of automorphic forms and the $\theta$-correspondence.

In this paper we give two generalizations of this result to non-unitary, and non-irreducible, representations, obtaining equivalences of categories between suitable representations of $Z$ and of $H$. The first, easier, generalization given in Theorem \ref{thmA}, involves algebraic categories and can be regarded as an extension of Kashiwara's equivalence from the theory of $D$-modules \cite{Kas}.

Our main result is the second generalization, stated in  Theorem \ref{thmB} below. It involves smooth representations and is much more delicate. Its proof occupies the first half of the paper and involves in a crucial way the fundamental ideas of du Cloux on differentiable representations and smooth imprimitivity systems for Nash groups. Our results allow us to construct a metaplectic representation for an arbitrary non-degenerate representation of the center $Z$.  In fact, in Theorem \ref{thm:FJ}
below we show that this construction defines an equivalence of categories between certain representations of $\Sp\times Z$ and $\widetilde{\Sp}\ltimes H$.

Our results were motivated by certain natural questions arising in the study of degenerate Whittaker models for representations of reductive groups, which are useful in the theory of automorphic forms.  We describe one such application in Theorem \ref{thm:Int_proWhit} below, which can be regarded as an ``independence of polarization'' result for certain spaces of equivariant functionals on a smooth representation. We should like to think that in view of the ubiquity of the Heisenberg group, the results of the present paper will prove useful elsewhere as well.

\subsection{Statement of Theorem \ref{thmA}}\label{subsec:A}
Let $L\subset W$ be a maximally isotropic subspace, then $L$ and $P=LZ$ are abelian subgroups of $H$. We write $\fh, \fl, \fz, \fp$ for the Lie algebras of $H,L,Z,P$ and $\cU(\fh)$ etc. for their enveloping algebras. We choose a generator $z$ for the one-dimensional Lie algebra $\fz$, and write $\cM(\fz)^{\times}$ for the category of $\fz$-modules on which $z$ acts invertibly -- evidently this category does not depend on the choice of $z$. Our algebraic result concerns the category $\cM_{\fl}(\fh)^{\times}$ consisting of $\fh$-modules satisfying the following conditions
\begin{enumerate}
\item $\fl$ acts local nilpotently
\item $z$ acts invertibly
\end{enumerate}

%
%
\SSc{
For $n=0$, the Heisenberg group reduces to $Z$, and $\cM_{\fl}(\fh)^{\times}$ reduces to $\cM(\fz)^{\times}$.  We show that the categories are equivalent for all $n$; for this we define a pair of functors as follows

\[ \label{FG}  F:  \cM_{\fl}(\fh)^{\times}\leftrightarrows \cM(\fz)^{\times}: G,\quad  F(M):=M^\fl,\; G(N):=\cU(\fh)\otimes_{\cU(\fp)}E(N), \]
where $M^\fl$ denotes the subspace of $\fl$-invariants of an $\fh$-module $M$, and $E(N)$ denotes the extension of a $\fz$-module to a module for $\fp=\fl+\fz$, trivial on $\fl$.

\begin{introtheorem}\label{thmA}
The functors $F,G$ are quasi-inverses; thus $\cM_{\fl}(\fh)^{\times}$ and $\cM(\fz)^{\times}$ are equivalent.
\end{introtheorem}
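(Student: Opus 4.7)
The plan is to fix a Lagrangian complement $\overline{\fl}$ of $\fl$ in $\fw$ and choose dual bases $x_1,\ldots,x_n\in\fl$ and $\overline{x}_1,\ldots,\overline{x}_n\in\overline{\fl}$ with $[x_i,\overline{x}_j]=\delta_{ij}z$. By PBW, $\cU(\fh)\cong \cU(\overline{\fl})\otimes\cU(\fp)$ as a right $\cU(\fp)$-module, so $G(N)\cong \Sym(\overline{\fl})\otimes N$ as a vector space, with first factor the polynomial algebra $\C[\overline{x}_1,\ldots,\overline{x}_n]$. The central element $z$ acts diagonally (nontrivially only on $N$), and since $x_i$ kills $1\otimes N$ (the $\fp$-action on $E(N)$ is trivial along $\fl$) the relation $[x_i,\overline{x}_j]=\delta_{ij}z$ forces $x_i$ to act on $\Sym(\overline{\fl})\otimes N$ as the operator $z\cdot\partial/\partial\overline{x}_i$.

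For the unit $\eta_N\colon N\to FG(N)$, the joint kernel on $\Sym(\overline{\fl})\otimes N$ of the operators $z\cdot \partial/\partial\overline{x}_i$ is precisely the space of constant polynomials $1\otimes N$, since $z$ is invertible on $N$. Hence $\eta_N$ is an isomorphism.

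For the counit $\epsilon_M\colon GF(M)\to M$, $u\otimes m\mapsto u\cdot m$, the key combinatorial identity, obtained by iterating $[x_i,\overline{x}_j^k]=k\delta_{ij}z\overline{x}_j^{k-1}$ and using that $\fl$ kills $m\in M^{\fl}$, is
\[
  x^\beta\overline{x}^\alpha m\;=\;\beta!\binom{\alpha}{\beta}z^{|\beta|}\,\overline{x}^{\alpha-\beta}m\text{ if }\alpha\geq\beta,\text{ and }0\text{ otherwise},\quad m\in M^{\fl}.
\]
Injectivity of $\epsilon_M$ is then immediate: given a putative nonzero relation $\sum\overline{x}^\alpha m_\alpha=0$ with $m_\alpha\in M^{\fl}$, choose $\beta$ with $|\beta|$ maximal among $\{\alpha:m_\alpha\neq 0\}$ and apply $x^\beta$; a surviving term requires $\alpha\geq\beta$ and $|\alpha|\leq|\beta|$, forcing $\alpha=\beta$ and yielding $\beta!z^{|\beta|}m_\beta=0$, hence $m_\beta=0$ by invertibility of $z$, a contradiction.

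Surjectivity is the heart of the argument and plays the role of Kashiwara's lemma. Assign to $m\in M$ its $\fl$-depth $d(m):=\min\{k:\fl^{k+1}\cdot m=0\}$, finite by local nilpotence of $\fl$. If $d(m)=k>0$, every $x^\alpha m$ with $|\alpha|=k$ lies in $M^{\fl}$, and the identity above shows that the corrected element
\[
  m'\;:=\;m-\sum_{|\alpha|=k}\frac{1}{\alpha!\,z^{k}}\,\overline{x}^\alpha\cdot(x^\alpha m)
\]
satisfies $x^\beta m'=0$ for every $|\beta|=k$, so $d(m')<k$. Iterating, $m\in\cU(\overline{\fl})\cdot M^{\fl}$, and naturality of the construction finishes the proof. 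The only non-formal ingredient is the invertibility of $z$, which is exactly what permits this explicit ``Taylor-type'' integration of an $\fl$-nilpotent element to an invariant representative; I expect this to be the sole conceptually delicate step, as it is the Heisenberg analogue of the mechanism underlying Kashiwara's equivalence for $D$-modules.
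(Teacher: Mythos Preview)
Your proof is correct and follows the same underlying idea as the paper's, namely the Heisenberg analogue of Kashiwara's lemma; however, the packaging differs in a way worth noting. The paper proceeds by induction on $\dim\fl$: it first proves a one-variable Key Lemma showing that for a single pair $(x,y)$ with $[x,y]=z$ one has $\Ker x^{i+1}=\bigoplus_{j=0}^{i}y^j\Ker x$, established via an eigenspace decomposition for the operator $z^{-1}yx$, and then bootstraps to general $\fl$ by iterating over a basis. You instead work directly in all variables at once, using the multi-index identity $x^\beta\overline{x}^\alpha m=\beta!\binom{\alpha}{\beta}z^{|\beta|}\overline{x}^{\alpha-\beta}m$ and an explicit depth-reduction (``Taylor correction'') step. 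Your approach is more elementary and self-contained for this particular statement, while the paper's filtration-by-eigenvalues argument is closer in spirit to the standard $D$-module proof and perhaps more suggestive of generalizations. One small point you might make explicit: your depth function $d(m)$ is well-defined because for $\fl$ abelian, local nilpotence of each $x_i$ on $m$ forces $x_i^{a_i}$ to annihilate all of $\cU(\fl)\cdot m$, so $\fl^{k}m=0$ for $k$ large; this is implicit in your argument but deserves a sentence.
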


In fact, Theorem \ref{thmA} is closely related to the Kashiwara equivalence   in the theory of $D$-modules \cite{Kas}, and can be proved using a similar strategy. Since we do not use this result in the main text, we defer the proof to Appendix \ref{app:Kash}.
\subsection{Statement of Theorem B}

We now give the smooth version of Theorem \ref{thmA}. If $G$ is a Nash group, for example the Heisenberg group $H$ or one of its subgroups $P,L,Z$, then we write $\Rep^{\infty}(G)$  for the category of smooth, nuclear \Fre representations of moderate growth \DimaA{(see \S \ref{sec:Sc} below)}.
For two representations $\pi,\tau\in \Rep^{\infty}(G)$ we denote by $\pi\hot \tau$ the completed tensor product, \Dima{by $\pi_G$ the coinvariants, and by
$\pi\hot_G\tau:=(\pi\hot\tau)_{\Delta G}$ the coinvariants of $\pi\hot \tau$ under the diagonal action of $G$ (see Definition \ref{def:coinv} for more details).}

Let $z$ be a generator of the Lie algebra $\fz$ of $Z$ as before; we say a representation $(\rho,M)$ in $\Rep^{\infty}(Z)$ is \emph{non-singular} if there exists a representation $(\rho',M)$ in $\Rep^{\infty}(Z)$ such that
$$d\rho(z)d\rho'(z)=d\rho'(z)d\rho(z)=1.$$
We write $\Rep^{\infty}(Z)^\times\subset \Rep^{\infty}(Z)$ for the category of non-singular $Z$-representations, and $\Rep^{\infty}(H)^\times\subset \Rep^{\infty}(H)$ for the category of $H$-representations with non-singular restriction to $Z$.

Our main result is that $\Rep^{\infty}(Z)^\times$ is equivalent to $\Rep^{\infty}(H)^\times$. As a first step  in \S \ref{sec:nonsing} below we show that the category $\Rep^{\infty}(Z)^\times$ is equivalent to the category of smooth modules  over the algebra $\Sc(\widehat{Z}^{\times})$ of Schwartz functions on the space $\widehat{Z}^{\times}$ of non-trivial unitary characters of $Z$. By a smooth module we mean a module satisfying $\Sc(\widehat{Z}^{\times})M=M$ (see \S \ref{sec:alg} below).
The algebra $\Sc(\widehat{Z}^{\times})$ is a smooth module over itself,  and the corresponding representation structure, that we denote by $\varpi$, is given by
$$\varpi(z)\phi(\chi):=\chi(z)\phi(\chi).$$
Let $E(\varpi)$ denote the trivial extension of $\varpi$ to a module for $P=LZ$ and define
$$\Omega:=\ind_{P}^H E(\varpi),$$
where ``$\ind$'' is the Schwartz induction functor for Nash groups defined by du Cloux  \cite{dCl} (see \S\ref{subsec:ImpThm} below).
Then $\Omega$ belongs to $\Rep^{\infty}(H)^\times$, and by Proposition \ref{prop:LagIndep} below, it does not depend on the choice of the Lagrangian $L$ (up to isomorphism).

We now define a pair of functors
$$C:\Rep^{\infty}(H)^\times \leftrightarrows \Rep^{\infty}(Z)^\times:I
\quad C(\tau):= \tau\hot_H \overline{\Omega}, \,\,I(\rho):=  \rho\hot_Z \Omega
$$
where $\overline{\Omega}$ is the complex conjugation of $\Omega$.

\begin{introtheorem}[See \S \ref{sec:cat}]\label{thmB}
The functors $C,I$ are quasi-inverses; thus $\Rep^{\infty}(H)^\times$ and  $\Rep^{\infty}(Z)^\times$ are equivalent.
\end{introtheorem}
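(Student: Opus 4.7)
The plan is to follow the classical Stone--von-Neumann strategy via Mackey-type imprimitivity, adapted to du Cloux's smooth Nash-group framework. Using the equivalence $\Rep^\infty(Z)^\times \simeq \Sc(\widehat{Z}^\times)\text{-Mod}_{sm}$ established in \S\ref{sec:nonsing}, any non-singular $Z$-representation $\rho$ is a smooth module over the Schwartz algebra, with $\varpi = \Sc(\widehat{Z}^\times)$ playing the role of the identity: $\rho \hot_Z \varpi \simeq \rho$. Since $Z$ is central in $H$ and Schwartz induction should commute with $Z$-equivariant completed tensor products, one expects a projection formula
\[
I(\rho) \;=\; \rho \hot_Z \ind_P^H E(\varpi) \;\simeq\; \ind_P^H E(\rho \hot_Z \varpi) \;\simeq\; \ind_P^H E(\rho),
\]
so up to natural isomorphism $I$ is the functor $\rho \mapsto \ind_P^H E(\rho)$.

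Next I would construct the unit $\eta_\rho \colon \rho \to CI(\rho)$ and counit $\epsilon_\tau \colon IC(\tau) \to \tau$ from the canonical Stone--von-Neumann pairing between $\overline{\Omega}$ and $\Omega$. To see that $\eta$ is an isomorphism it would suffice to establish a smooth Plancherel-type identity
\[
\overline{\Omega} \hot_H \Omega \;\simeq\; \Sc(\widehat{Z}^\times)
\]
as a $(Z,Z)$-bimodule, since then $CI(\rho) \simeq \rho \hot_Z \Sc(\widehat{Z}^\times) \simeq \rho$. Conversely, for $\epsilon$ I would invoke du Cloux's smooth imprimitivity theorem for the Nash pair $(H,P)$, which identifies $H$-representations equipped with a compatible $\Sc(H/P)$-module structure with $P$-representations via $\ind_P^H$. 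The key point is that non-singularity of $\tau|_Z$ canonically upgrades $\tau$ to such an imprimitivity system: Fourier analysis along $Z$ together with the factorization $\Sc(H/Z) \simeq \Sc(\widehat{Z}^\times) \hot \Sc(H/P)$ transports the $\Sc(\widehat{Z}^\times)$-action on $\tau|_Z$ to an $\Sc(H/P)$-action on $\tau$. This reduces $\tau$ to a $P$-representation of the form $E(\rho)$ with $\rho \in \Rep^\infty(Z)^\times$, giving $\tau \simeq \ind_P^H E(\rho) \simeq IC(\tau)$.

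The main obstacle I foresee is the smooth Plancherel identity for $\overline{\Omega} \hot_H \Omega$. For a single unitary central character $\chi$ this is the classical orthogonality relation for matrix coefficients of the Schr\"odinger representation, but in our smooth Fr\'echet, nuclear, moderate-growth framework, with $\chi$ varying over $\widehat{Z}^\times$ and no Hilbert structure available, the proof demands careful control of how Schwartz induction interacts with completed tensor products and coinvariants, together with uniform continuity of the intertwining integrals in the parameter $\chi$. Du Cloux's theory is designed for exactly this kind of analysis, but its detailed application here, together with the verification that the projection formula above really does hold at the level of smooth moderate-growth representations, is what I expect to be the technical heart of the proof.
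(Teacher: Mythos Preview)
Your overall strategy—identifying $I$ with $\ind_P^H E(-)$, $C$ with $L$-coinvariants, and then invoking du Cloux's imprimitivity—matches the paper's. But the specific mechanism you propose for producing the imprimitivity system on $\tau$ is wrong, and your two-step plan (a Plancherel identity for $CI\simeq\Id$, imprimitivity for $IC\simeq\Id$) is more circuitous than needed.

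The factorization $\Sc(H/Z) \simeq \Sc(\widehat{Z}^\times) \hot \Sc(H/P)$ is false: $H/Z = W$ has dimension $2n$ while $\widehat{Z}^\times \times H/P$ has dimension $n+1$, so these are not even diffeomorphic for $n>1$. Fourier analysis along $Z$ alone cannot manufacture an $\Sc(H/P)$-action on $\tau$. What the paper does instead is take Fourier transform along the \emph{entire} maximal abelian subgroup $P = L \times Z$, giving $\tau$ a differentiable $\Sc(\widehat{L} \times \widehat{Z}^\times)$-structure (non-singularity is precisely what lets one pass from $\widehat{Z}$ to $\widehat{Z}^\times$). The commutation relations in $H$ then make $\tau$ an $(H,\widehat{L}\times\widehat{Z}^\times)$-imprimitivity system, but the $H$-action on $\widehat{L}\times\widehat{Z}^\times$ is not a product: the key step you are missing is the change of variables $(\psi,\chi)\mapsto(\chi\psi,\chi)$, after which the action decouples into the transitive action on $\widehat{L}\simeq H/P$ times the trivial action on $\widehat{Z}^\times$. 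The \emph{extended} imprimitivity theorem (Theorem~\ref{thm:imprimitivity}, with auxiliary parameter space $\widehat{Z}^\times$) then gives the equivalence in one stroke: taking the fiber at $0\in\widehat{L}$ yields $\tau_L$, with inverse $\ind_P^H$. Both $CI\simeq\Id$ and $IC\simeq\Id$ fall out of this single equivalence; the Plancherel identity $\overline{\Omega}\hot_H\Omega\simeq\varpi$ you anticipate having to prove directly is recorded only as a remark after the fact (Remark~\ref{rem:NatTran}).
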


The letter $C$ stands for coinvariants, and $I$ for induction. This is since for any choice of a Langrangian $L$, $C(\tau)$ is isomorphic to the coinvariants $\tau_L$ and $I(\rho)\cong \ind_P^HE(\rho)$ (see \S \ref{sec:cat} below). Thus we get a family of equivalences, one for each $L$, between the same two categories, and this fact plays a key role in our next result.


%
%
%

\subsection{Statement of Proposition \ref{prop:OmExt} and Theorem \ref{thm:FJ}}

The Heisenberg group $H$ is the extension of a symplectic vector space $W$ by a one dimensional center $Z$, and the symplectic group $Sp(W)$ acts on $H$ by automorphisms fixing $Z$. The classical Stone-von Neumann theorem implies that if $\rho$ is an irreducible unitary representation of $H$ with non-trivial central character under $Z$ then $\rho$ extends to a representation $\sigma$ of the universal cover of the symplectic group $Sp(W)$. A celebrated result of Segal-Shale-Weil \cite{Seg,Sha,Wei} asserts that $\sigma$ descends to a double cover of $Sp(W)$ called the metaplectic group $\Mp(W)$. Our next result is a smooth analog of this for the canonical representation $\Omega$.

\begin{introprop}[See \S \ref{sec:Weil}]\label{prop:OmExt} There is a unique extension of $\Omega$ to a representation of $\Mp(W)\ltimes H$.
\end{introprop}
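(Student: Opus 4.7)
The plan is to prove existence via the classical Segal-Shale-Weil construction applied fiberwise over the character variety $\widehat{Z}^{\times}$, and to deduce uniqueness from Theorem~\ref{thmB} combined with the perfectness of $\Mp(W)$.

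For existence, for each $\chi \in \widehat{Z}^{\times}$ the specialization $\Omega_\chi := \ind_P^H \chi$ is the smooth Stone-von Neumann representation of $H$ with central character $\chi$, and the classical Shale-Weil theorem produces a canonical extension $\widetilde{\omega}_\chi$ of $\Omega_\chi$ to $\Mp(W) \ltimes H$. Fixing a Lagrangian $L$, one identifies $\Omega$ with a Schwartz section model over $\widehat{Z}^{\times}$ whose fiber at $\chi$ is $\Omega_\chi$, and defines the extended action of $\widetilde{g} \in \Mp(W)$ on $\Omega$ fiberwise by $\widetilde{\omega}_\chi(\widetilde{g})$. The explicit Shale-Weil formulas on a generating set (Fourier transform, multiplication by quadratic exponentials, together with $\chi$-dependent normalizing factors involving square roots and Maslov indices) depend smoothly on $\chi \in \widehat{Z}^{\times}$, so this fiberwise construction assembles into a smooth representation of $\Mp(W) \ltimes H$ on $\Omega$ extending the given $H$-action.

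For uniqueness, let $\widetilde{\omega}_1, \widetilde{\omega}_2$ be two such extensions and set $c(\widetilde{g}) := \widetilde{\omega}_1(\widetilde{g}) \widetilde{\omega}_2(\widetilde{g})^{-1}$. The semidirect-product relation forces $c(\widetilde{g})$ to commute with the $H$-action, so $c(\widetilde{g}) \in \End_H(\Omega)^{\times}$. By Theorem~\ref{thmB}, $\End_H(\Omega) \cong \End_Z(\varpi)$, which is the commutative algebra of multipliers on $\Sc(\widehat{Z}^{\times})$. Viewing each $\widetilde{\omega}_i(\widetilde{g})$ as an $H$-isomorphism $\Omega^g \xrightarrow{\sim} \Omega$ (where $\Omega^g$ denotes the $g$-twisted representation, which lies in $\Rep^{\infty}(H)^{\times}$ and satisfies $C(\Omega^g) = \varpi$ because $g$ fixes $Z$ pointwise), the functor $C$ sends it to an invertible element of $\End_Z(\varpi)$; hence the conjugation action of $\widetilde{\omega}_i(\widetilde{g})$ on $\End_H(\Omega) \cong \End_Z(\varpi)$ corresponds under $C$ to inner conjugation inside the commutative algebra $\End_Z(\varpi)$, which is trivial. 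Therefore $c \colon \Mp(W) \to \End_H(\Omega)^{\times}$ is a continuous group homomorphism into an abelian group. Since $\Mp(W)$ is a connected real Lie group with simple Lie algebra (for $n \geq 1$), it is perfect, and so $c$ is trivial; hence $\widetilde{\omega}_1 = \widetilde{\omega}_2$.

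The principal technical obstacle is verifying that the fiberwise Shale-Weil action genuinely assembles into a smooth representation of moderate growth on the Schwartz total space of $\Omega$. The delicate points are the uniform control of the $\chi$-dependence of the Weil cocycle's normalizing factors (square roots, Maslov indices, Gaussian integrals) together with all their derivatives; this should be tractable from the explicit formulas, and could alternatively be bypassed by constructing the $\Mp(W)$-action directly on the Schwartz-induction model $I(\varpi) = \Omega$ using du Cloux's imprimitivity machinery already developed in the first half of the paper.
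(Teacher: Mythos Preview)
Your proposal is correct and follows essentially the same strategy as the paper, with a modest difference in the uniqueness argument.

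For existence, the paper does exactly what you outline: it writes explicit Weil-type formulas on $\Sc(W/L\times\widehat{Z}^{\times})$ (formulas \eqref{=WeilE}--\eqref{=WeilE3}) and verifies them fiberwise over $\widehat{Z}^{\times}$. The ``principal technical obstacle'' you flag is dispatched in the paper by a single dilation: conjugating by $f(x)\mapsto f(\sqrt{|s|}\,x)$ reduces the fiber at $\chi_s$ to the classical Weil representation at $s=\pm 1$, so smoothness and the group law follow from the explicit formulas without any separate analysis of Maslov indices or Gaussian normalizations.

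For uniqueness, the paper argues fiberwise: since $\Mp(W)$ commutes with $Z$, any extension descends to each $\Omega_\chi$; there it is determined up to a character of $\Mp(W)$ because $\End_H(\Omega_\chi)=\C$ (by the equivalence $I$ applied to the one-dimensional $\chi$), and $\Mp(W)$ has no characters. Your argument is a global repackaging of the same two ingredients: you form $c(\widetilde g)\in\End_H(\Omega)^{\times}$, identify this with the commutative multiplier algebra $\End_Z(\varpi)$ via Theorem~\ref{thmB}, and invoke perfectness of $\Mp(W)$. The one step you should tighten is the claim that conjugation by $\widetilde\omega_i(\widetilde g)$ is trivial on $\End_H(\Omega)$: the cleanest justification is not ``inner conjugation in a commutative algebra via $C$'' (which requires tracking $C(\Omega)$ versus $C(\Omega^g)$), but rather that $\widetilde\omega_i(\widetilde g)$ commutes with the $Z$-action (since $g$ fixes $Z$), hence with the $\Sc(\widehat{Z}^{\times})$-module structure, hence with the multipliers that constitute $\End_H(\Omega)$. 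With that adjustment your global argument and the paper's fiberwise argument are equivalent.
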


Our proof is pointwise; namely we define the action of $\Mp$ on $\Omega$ by analogy with the formulas for the Weil representation, and check it for each character $\chi$ of $Z$.

We next upgrade these formulas to a functorial equivalence. Let $\Rep^{\infty}(\Mp(W)\ltimes H)_{-}^\times$ denote the subcategory of $\Rep^{\infty}(\Mp(W)\ltimes H)$ consisting of  representations whose restriction to $H$ lies in $\Rep^{\infty}(H)^\times$, and which are \emph{genuine} in the sense that the preimage of the identity of $\Sp$ in $\widetilde{\Sp}$\ acts non-trivially.
We upgrade the functors $C$ and $I$ from Theorem \ref{thmB} to functors
$$C^+:\Rep^{\infty}(\Mp(W)\ltimes H)_{-}^\times\leftrightarrows \Rep^{\infty}(\Sp(W)\times Z)^\times:I^+$$
given by $C^+(\tau):= \tau\hot_H \overline{\Omega}$ with $\Mp(W)$ acting diagonally (thus factorizing through $\Sp(W)$), and $Z$ acting on $\tau$;  and $I^+(\rho):=  \rho\hot_Z \Omega$, with  $H$ acting on $\Omega$, and $\Mp$ acting diagonally.

\begin{introtheorem}[See \S \ref{sec:Weil}]\label{thm:FJ}
The functors $C^+,I^+$ are quasi-inverses; thus $\Rep^{\infty}(\Mp(W)\ltimes H)_{-}^\times$ and $\Rep^{\infty}(\Sp(W)\times Z)^\times$ are equivalent.
\end{introtheorem}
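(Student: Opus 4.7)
The plan is to bootstrap from Theorem \ref{thmB}, using Proposition \ref{prop:OmExt} to carry the additional metaplectic symmetry through the equivalence already established at the level of $H$- and $Z$-representations.

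First I would verify that $C^+$ and $I^+$ are well-defined functors between the stated categories. For $\tau\in\Rep^{\infty}(\Mp(W)\ltimes H)_{-}^\times$, the diagonal action of $\Mp(W)$ on $\tau\hot\overline{\Omega}$ descends to the $H$-coinvariants because $\Mp$ normalizes $H$ through its quotient $\Sp$. Let $\epsilon\in\ker(\Mp\to\Sp)$ be the non-trivial element. Since both $\tau$ and $\Omega$ are genuine, $\epsilon$ acts by $-1$ on each; on $\overline{\Omega}$ it still acts by $-1$, since complex conjugation fixes $-1$. Thus $\epsilon$ acts by $(-1)(-1)=+1$ on $C^+(\tau)$, so the diagonal $\Mp$-action factors through $\Sp$, and it commutes with the $Z$-action since $\Sp$ fixes $Z$ pointwise. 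A symmetric computation shows that the $\Mp\ltimes H$-action on $I^+(\rho)=\rho\hot_Z\Omega$ is well-defined and genuine.

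Next I would promote the quasi-inverse natural isomorphisms $\alpha:\id\Rightarrow I\circ C$ and $\beta:C\circ I\Rightarrow\id$ provided by Theorem \ref{thmB}. These are built from canonical maps that use only the bimodule structure of $\Omega$ over $(H,\Sc(\widehat{Z}^{\times}))$. Since Proposition \ref{prop:OmExt} equips $\Omega$ with an $\Mp$-action compatible with the semidirect product $\Mp\ltimes H$ and commuting with the $Z$-action, the canonical morphisms $\alpha_\tau$ and $\beta_\rho$ automatically intertwine the diagonal $\Mp$-actions on source and target. Hence $\alpha_\tau$ promotes to an $\Mp\ltimes H$-equivariant isomorphism $\tau\cong I^+(C^+(\tau))$ and $\beta_\rho$ to an $\Sp\times Z$-equivariant isomorphism $C^+(I^+(\rho))\cong\rho$; naturality in the enhanced categories follows from naturality in the original ones.

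The main obstacle is the rigorous justification of this ``transport of equivariance'', namely that the canonical isomorphisms from the proof of Theorem \ref{thmB} are indeed functorial in an $\Mp$-symmetry of the bimodule $\Omega$. A direct approach is to fix a Lagrangian $L$ and use the concrete descriptions $C(\tau)\cong\tau_L$ and $I(\rho)\cong\ind_P^H E(\rho)$; here the $\Sp$-action on $C^+(\tau)$ is not visible in the $\tau_L$-model, since $\Sp$ moves $L$, so one must revert to the intrinsic description $\tau\hot_H\overline{\Omega}$ and invoke the explicit Weil formulas of Proposition \ref{prop:OmExt} to check the $\Mp$-equivariance of the unit and counit. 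The fact that Proposition \ref{prop:OmExt} is established pointwise, one central character at a time, is convenient: the compatibility reduces character-by-character to the classical Stone--von Neumann situation, where the $\Mp$-equivariance of the Weil intertwining operators is standard. Combining this with the $H$- and $Z$-equivariance already established in Theorem \ref{thmB} yields the full $\Mp\ltimes H$- respectively $\Sp\times Z$-equivariance, completing the proof.
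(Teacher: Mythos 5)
Your overall strategy matches the paper's: reduce the $\Mp$-equivariance of the unit $\alpha$ and counit $\beta$ from Theorem \ref{thmB} to a pointwise (character-by-character) check, using the explicit Weil action from Proposition \ref{prop:OmExt}. Your preliminary well-definedness/genuineness verification is a sound addition that the paper leaves implicit.

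The one step that remains vague in your write-up is precisely the bootstrap from the character-by-character statement to general $\rho$ and $\tau$. A general $\rho\in\Rep^{\infty}(\Sp\times Z)^{\times}$ is not a direct integral of characters in any sense usable here, so ``reduces character-by-character'' is not immediately rigorous. The paper closes this gap by exploiting the generator structure of the categories together with naturality: for $C^+I^+\cong\Id$, one first checks the claim for one-dimensional $\rho$ (this is the classical pointwise case), then for $\rho=\varpi$ by observing that evaluation at each $\chi\in\widehat{Z}^{\times}$ is a morphism $\varpi\to\chi$ and that $\alpha$ is natural, then for $\rho=\varpi\hot_{\C}V$, and finally for general $\rho$ because every object of $\Rep^{\infty}(Z)^{\times}$ is a quotient of some $\varpi\hot_{\C}V$. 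The proof of $I^+C^+\cong\Id$ is parallel, with the $\tau=\Omega$ case handled not by a formula check but by the uniqueness clause of Proposition \ref{prop:OmExt}, and the general case again by naturality and the fact that every $\tau$ is a quotient of some $\Omega\hot_{\C}V$. If you replace your phrase ``reduces character-by-character'' with this two-step argument (generator object $\varpi$ resp.\ $\Omega$, then naturality to pass to quotients of $\varpi\hot_{\C}V$ resp.\ $\Omega\hot_{\C}V$), your proposal becomes a complete proof along the same lines as the paper's.
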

 }

 Since every representation of $Z$ trivially extends to a representation of $\Sp(W)\times Z$ on the same space, we obtain a functor
$$T: \Rep^{\infty}(H)^\times \overset{C}{\to}\Rep^{\infty}( Z)^\times\overset{\mathrm{triv}}{\to} \Rep^{\infty}(\Sp(W)\times Z)^\times\overset{I^+}{\to} \Rep^{\infty}(\Mp(W)\ltimes H)_{-}^\times$$
The functor $T$ can be characterized uniquely as follows.
\begin{introtheorem}[See \S \ref{sec:Weil}]\label{thm:Ext}
The functor $T$ is the unique right quasi-inverse of the restriction functor $$\Rep^{\infty}(\Mp(W)\ltimes H)_{-}^\times \to \Rep^{\infty}(H)^\times.$$
\end{introtheorem}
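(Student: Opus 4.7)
The plan is to establish the theorem in two parts: first show $T$ is itself a right quasi-inverse of $R$, then argue uniqueness by combining Theorem \ref{thmB} with the uniqueness assertion in Proposition \ref{prop:OmExt}.

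For existence, unfold $T = I^+\circ\mathrm{triv}\circ C$. The underlying $H$-representation of $I^+(N)$ is $I(N|_Z)$ by construction, and $\mathrm{triv}(C(\tau))|_Z = C(\tau)$; hence $R\circ T\cong I\circ C\cong\mathrm{Id}$ by Theorem \ref{thmB}. Denote the resulting natural isomorphism by $\eta:R\circ T\cong\mathrm{Id}$.

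For uniqueness, let $(T',\eta')$ be another right quasi-inverse. For each $\tau\in\Rep^{\infty}(H)^\times$ define the linear map $\Phi_\tau := \eta_\tau^{-1}\circ\eta'_\tau:T'(\tau)\to T(\tau)$; this is $H$-equivariant by construction and natural in $\tau$ from the naturality of $\eta,\eta'$. The remaining task is to verify that $\Phi_\tau$ is $\Mp(W)$-equivariant, which promotes it to an isomorphism in $\Rep^{\infty}(\Mp(W)\ltimes H)_{-}^\times$. Fix by Theorem \ref{thmB} an $H$-isomorphism $\theta_\tau:C(\tau)\hot_Z\Omega\IsoTo\tau$, and for each $\phi\in C(\tau)$ let $\tilde\phi:\Omega\to\tau$ be the $H$-equivariant map $\omega\mapsto\theta_\tau(\phi\otimes\omega)$. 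Functoriality of $T'$ yields an $\Mp\ltimes H$-equivariant morphism $T'(\tilde\phi):T'(\Omega)\to T'(\tau)$. By Proposition \ref{prop:OmExt} the $\Mp\ltimes H$-structure on $T'(\Omega)$ is forced to coincide with the canonical Weil extension $\Omega^+ = T(\Omega)$. Transporting via $\eta'_\Omega$ and $\eta'_\tau$ realises $T'(\tilde\phi)$ as an $\Mp\ltimes H$-equivariant map $\Omega^+\to\tau$ whose $H$-restriction is $\tilde\phi$; the analogous argument for $T(\tilde\phi)$ yields the same $H$-restriction. Since $T(\tau)=C(\tau)\hot_Z\Omega^+$ has $\Mp$ acting only on the second factor, one computes $g\cdot\theta_\tau(\phi\otimes\omega)=\theta_\tau(\phi\otimes g\omega)$ in $T(\tau)$, and the same formula then holds for the transported $\Mp$-action on $T'(\tau)$. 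Hence the two $\Mp$-actions on $\tau$ agree on all elements of the form $\theta_\tau(\phi\otimes\omega)$, which span a dense subspace of $\tau$ by Theorem \ref{thmB}. Continuity of the $\Mp$-action then implies $\Phi_\tau$ is globally $\Mp$-equivariant.

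The main obstacle is to apply Proposition \ref{prop:OmExt} rigidly enough: one needs the $\Mp$-action on any $\Mp\ltimes H$-extension of $\Omega$ to be canonically determined, not merely determined up to an $H$-automorphism of $\Omega$. This should hold because the Weil formulas are defined pointwise on each central-character component of $\Omega$, and because $\End_H(\Omega)\cong\Sc(\widehat{Z}^\times)$ acts by $Z$-central multipliers that commute with $\Mp$ (since $\Mp$ fixes $Z$ pointwise); consequently any ambiguity of identification is $\Mp$-central and does not affect the comparison of actions. A secondary technicality is the topological density of pure tensors $\theta_\tau(\phi\otimes\omega)$ in the completed tensor product $C(\tau)\hot_Z\Omega\cong\tau$, which is a standard feature of the Fréchet framework underlying Theorem \ref{thmB}.
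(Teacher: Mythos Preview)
Your argument is correct, but takes a different route from the paper. The paper's proof is a two-line reduction: by Theorem~\ref{thm:FJ} the equivalences $C^+,I^+$ identify the restriction functor $\Rep^{\infty}(\Mp\ltimes H)_{-}^\times \to \Rep^{\infty}(H)^\times$ with the forgetful functor $\Rep^{\infty}(\Sp\times Z)^\times \to \Rep^{\infty}(Z)^\times$, so it suffices to show the latter has a unique right quasi-inverse, and this follows from the case analysis already carried out in proving Theorem~\ref{thm:FJ} (one-dimensional reps force the $\Sp$-action to be trivial, and every $\rho$ is a quotient of some $\varpi\hot V$). You instead stay at the $\Mp\ltimes H$ level and run the generation argument directly, using $\Omega$ as the generator: every $\tau$ is densely spanned by images of $H$-morphisms $\Omega\to\tau$, and Proposition~\ref{prop:OmExt} pins down the $\Mp$-action on the source. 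What the paper's route buys is brevity, since the work has been absorbed into Theorem~\ref{thm:FJ}; what your route buys is logical independence from Theorem~\ref{thm:FJ}, needing only Theorem~\ref{thmB} and Proposition~\ref{prop:OmExt}.

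Two remarks on your writeup. First, your $\theta_\tau$ and $\eta_\tau$ are in fact the same map (both are the natural isomorphism $IC\cong\Id$ from Theorem~\ref{thmB}); it would be cleaner to introduce only one symbol. Second, the worry in your final paragraph is unfounded: Proposition~\ref{prop:OmExt} asserts literal uniqueness of the $\Mp\ltimes H$-action extending the given $H$-action on the \emph{same} underlying space $\Omega$, not merely uniqueness up to isomorphism. So once you transport the $\Mp$-action from $T'(\Omega)$ to $\Omega$ along $\eta'_\Omega$, it is forced to equal the canonical Weil action on the nose, and there is no residual ambiguity to discuss.
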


In other words, Theorem \ref{thm:Ext} says that any $\tau \in \Rep^{\infty}(H)^\times$ extends functorially via $T$ to a representation of $\Mp(W)\ltimes H$ on the same space, and such a functorial  extension is unique. A single representation  $\tau \in \Rep^{\infty}(H)^\times$ can have several extensions, but any two of them will  differ by an action of $\Sp(W)$ on $C(\tau)$.

\subsection{Statement of Theorem \ref{thm:Int_proWhit}}\label{subsec:IntproWHit}

We give an application of our results to degenerate Whittaker models. Let $G$ be a real reductive group with Lie algebra $\fg$. A Whittaker pair $(S,\varphi)$ for $G$ consists of a real semisimple element $S$ in $\fg$ and an element $\varphi \in \fg^*$ such that
$ad^*(S)(\varphi)=-2\varphi$. To this data one can associate the nilpotent Lie algebra  $\fu =\fg^S_{\ge1}$ consisting of $ad(S)$-eigenspaces with eigenvalues $\ge1$, \DimaA{an anti-symmetric form $\omega_{\varphi}$ on $\fu$  
and a Heisenberg quotient $\fh$ of $\fu$ such that $\varphi$ defines a central character of $\fh$ - see \S \ref{sec:Whit} below. 
Let $\fl\subset \fu$ be a maximal isotropic Lie subalgebra, and $L\subset G$ be the corresponding unipotent subgroup. Let $\chi_\varphi$ the unitary character of $L$ defined by $\varphi$. 
  We then define
 $\pro\cW_{S,\varphi}:=\pro\cW_{S,\varphi}^L:=\Sc(G)_{(L,\chi_\varphi)},$ where the subindex $(L,\chi_\varphi)$ means generalized coinvariants - see \S \ref{sec:coinv} below.
We show in \S \ref{sec:Whit} that this representation of $G$ does not depend on the choice of $L$. 
 For any $\pi\in \Rep^{\infty}(G)$ we define
$$\pi_{(S,\varphi)}:=\pi_{(S,\varphi)}^L:=\pi\hot_G (\pro\cW^L_{S,\varphi})$$
We also construct a natural isomorphism $\pi^L_{(S,\varphi)}\cong \pi_{(L,\chi_{\varphi})}$.}
\Dima{If $S$ can be completed to an $\sl_2$-triple $(e,S,f)$ such that $\varphi$ is given by the Killing form pairing with $f$ then we call the pair $(S,\varphi)$ and the model \emph{neutral}.} By the Jacobson-Morozov theorem, every \DimaA{nilpotent $f\in \fg$} can be completed to a neutral pair, uniquely up to conjugation. Thus every \DimaA{nilpotent $\varphi\in \fg^*$} defines a neutral model, which does not depend on the choice of $S$. We will thus denote $\pro\cW^{L}_{S,\varphi}$ for neutral $S$ by $\pro\cW^{L}_{\varphi}$, and the corresponding $\pi^{L}_{(S,\varphi)}$ by $\pi^{L}_{(\varphi)}$. Denote by $\WO(\pi)$ the set of all $\varphi\in \fg^*$ such that $\pi_{(\varphi)}$ does not vanish. This set is a union of coadjoint $G$-orbits, and we denote the collection of orbits in $\WO(\pi)$ that are maximal under the closure ordering by $\WS(\pi)$.

\DimaB{Following \cite{GGS} we construct in \S \ref{sec:Whit} below a map $\pro\cW^{L}_{(\varphi)}\onto \pro\cW^{L'}_{(S,\varphi)}$ for any pair $(S,\varphi)$ and any choice of maximal isotropic Lie algebras $\fl,\fl'$ as above. 
For any $\pi \in \Rep^{\infty}(G)$ this defines a map $\pi^L_{(\varphi)}\onto \pi^{L'}_{(S,\varphi)}$. 
}

In \S \ref{sec:Whit} we use Theorem \ref{thmB}, as well as the technique of \cite{GGS2}, to prove the following theorem.

\begin{introthm}[See \S\ref{sec:Whit}]\label{thm:Int_proWhit}
Let $\pi\in \Rep^{\infty}(G)$, and let $(S,\varphi)$ be a Whittaker pair with $\Dima{G\cdot}\varphi\in \WS(\pi)$. Then the map $\pi^{L}_{(\varphi)}\to \pi^{L'}_{(S,\varphi)}$ given by \eqref{=modSeqpi} below is an \DimaA{isomorphism}.
\end{introthm}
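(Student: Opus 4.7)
The plan is to reduce the statement to a comparison between the given pair $(S,\varphi)$ and a neutral pair $(S_0, \varphi)$ obtained from Jacobson--Morozov by a deformation argument in the style of \cite{GGS2}. One constructs a finite chain of Whittaker pairs $(S_0,\varphi), (S_1,\varphi), \ldots, (S_N,\varphi) = (S,\varphi)$ such that each consecutive pair differs only by a single ``adjacent'' modification of the grading, and factors the natural map $\pi_{(\varphi)}\to \pi_{(S,\varphi)}$ through this chain. The theorem then splits into (i) identifying the coinvariants $\pi_{(S_i,\varphi)}$ and $\pi_{(S_{i+1},\varphi)}$ for adjacent pairs, and (ii) showing that the ``defect'' at each step vanishes by the maximality of $G\cdot\varphi$ in $\WO(\pi)$.

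For (i), Theorem \ref{thmB} enters decisively. An adjacent modification only alters the Heisenberg quotient $\fh$ of $\fu$ by a change of Lagrangian polarization of the underlying symplectic space, while the Heisenberg group $H$ and its central character $\chi_\varphi$ are unchanged. By Proposition \ref{prop:LagIndep}, the representation $\Omega_\varphi$ is independent of the choice of Lagrangian up to canonical isomorphism, and the smooth Stone--von Neumann equivalence converts the two models of $(\pi|_U)\hot_U \Omega_\varphi$ into manifestly isomorphic objects in $\Rep^{\infty}(Z)^\times$.

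For (ii), the transition from $(S_i,\varphi)$ to $(S_{i+1},\varphi)$ fits into an exact sequence whose error term is a coinvariant of $\pi$ against a Whittaker-type module supported on coadjoint orbits whose closures strictly contain $G\cdot \varphi$. The hypothesis $G\cdot\varphi\in \WS(\pi)$ asserts that $G\cdot\varphi$ is maximal in $\WO(\pi)$, so each such error term vanishes. Combining the isomorphism from (i) with this vanishing yields an embedding with dense image at each step, and hence after composing along the full chain.

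The main obstacle is the topological aspect of step (i): algebraically the two coinvariants are visibly the same datum, but in $\Rep^{\infty}$ one must realize the identification as a continuous morphism of nuclear \Fre spaces with dense image. Theorem \ref{thmB}, together with the Schwartz-induction formalism of du Cloux, is precisely what upgrades the algebraic identification to a morphism in the smooth \Fre category, and it is here that the full strength of the paper's smooth Stone--von Neumann equivalence is brought to bear.
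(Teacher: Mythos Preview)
Your broad strategy matches the paper's: deform from a neutral pair to $(S,\varphi)$ along a path $S_t$, break the comparison into finitely many elementary steps, invoke Theorem~\ref{thmB} for independence of polarization, and use $G\cdot\varphi\in\WS(\pi)$ to kill the obstruction at each step. However, your decomposition into (i) and (ii) misstates where each ingredient actually enters.

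The sequence \eqref{=modSeqpi} alternates between two genuinely different kinds of steps. At each critical index $i$ one has \emph{two} maximal isotropic subspaces $L_i,R_i\subset\fg^{S_{t_i}}_{\ge1}$, chosen so that $\fr_i\subset\fl_{i+1}$. The first kind of step is the isomorphism $\pi_{(L_i,\chi_\varphi)}\cong\pi_{(R_i,\chi_\varphi)}$ at the \emph{same} grading $S_i$; this is where Theorem~\ref{thmB} enters, via Lemmas~\ref{lem:SvN} and~\ref{lem:swap}. The second kind is the quotient map $\pi_{(R_i,\chi_\varphi)}\to\pi_{(L_{i+1},\chi_\varphi)}$ induced by the inclusion $\fr_i\subset\fl_{i+1}$; here the nilpotent algebra $\fu=\fg^{S_t}_{\ge1}$ and its Heisenberg quotient genuinely change. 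Your assertion in (i) that an adjacent modification ``only alters the Heisenberg quotient by a change of Lagrangian polarization while $H$ is unchanged'' is therefore incorrect, and Theorem~\ref{thmB} does not by itself bridge $S_i$ to $S_{i+1}$.

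The substance of the theorem is that the second kind of step is an isomorphism when $G\cdot\varphi\in\WS(\pi)$. The paper does not argue this via an exact sequence with error term supported on larger orbits as you propose; instead it dualizes and shows that $\fl_{i+1}/\fr_i$ acts locally nilpotently on the space of generalized $(\fr_i,\chi_\varphi)$-equivariant functionals on $\pi$. This local-nilpotency statement is reduced, via the quasi-Whittaker machinery of \cite[Lemma~4.9, Proposition~4.12, Corollary~4.20]{GGS2}, to Proposition~\ref{prop:LocNilp} (a criterion for representations of the $ax+b$ group $P_2(\R)$). Your sketch of (ii) invokes the right hypothesis but omits this mechanism, which is where the actual work lies.
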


\DimaA{
\subsection{Analogs over other local fields}

Analogous theorems with analogous proofs still hold if one lets $H$ be a Heisenberg group over any local field $F$ of characteristic zero (that is $\R,\C$, or a finite extension of $\Q_p$). In this case $Z$ is the field of definition, and one defines $\Rep^{\infty}(Z)$ to be the category of smooth modules  over the algebra $\Sc(\widehat{Z}^{\times})$ of Schwartz functions on  $\widehat{Z}^{\times}$. 

Over non-Archimedean  fields 
the space of Schwartz functions means just locally constant compactly supported complex-valued functions. This case is easier since in this case no functional analysis is required, and the category of smooth modules is equivalent to the more geometric category of equivariant $l$-sheaves. For certain non-Archimedean analogs of Theorems \ref{thmB}-\ref{thm:Ext} above see \cite[\S 2]{Weis}.
}

\subsection{Structure of the paper}
In \S\S\ref{sec:alg},\ref{sec:Sc} we give the necessary preliminaries from \cite{dCl} on abstract \Fre algebras, and on Schwartz functions on Nash manifolds. In \S \ref{sec:Imp} we recall and slightly extend the du Cloux theory of smooth imprimitivity systems.  In \S\ref{sec:proj1} we develop the technical notion of a projective line over a 1-parameter group.

In  \S\ref{sec:nonsing} we use all the previous sections to define the category $\Rep^{\infty}(Z)^{\times}$ of non-singular representations of $\R$.
 In \S \ref{sec:cat} we define the category  $\Rep^{\infty}(H)^{\times}$ of non-singular representations of $H$ and construct an equivalence between it and $\Rep^{\infty}(Z)^{\times}$, proving Theorem \ref{thmB}.

In \S \ref{sec:Weil} we construct the generalized oscillator representation, and prove Proposition \ref{prop:OmExt}, and Theorems \ref{thm:FJ} and \ref{thm:Ext}.

In \S \ref{sec:coinv} we prove some technical statements on generalized coinvariants.

In \S \ref{sec:Whit} we apply Theorem \ref{thmB}, and the statements from \S \ref{sec:coinv}, to degenerate pro-Whittaker models, and prove Theorem \ref{thm:Int_proWhit}.

In Appendix \ref{app:Kash} we prove Theorem \ref{thmA}.

\subsection{Ideas of the proofs}
Let us first give an intuition for the notion of smooth imprimitivity system, defined in \S \ref{sec:Imp} below. A $G$-imprimitivity system $\cF$ on a $G$-space $X$ is a \Fre space with compatible structures of a module over the algebras of Schwartz functions on $X$ (with pointwise multiplication as product) and on $G$ (with convolution), satisfying certain conditions.

Geometrically, we may think of a $G$-imprimitivity system on a $G$-space $X$ as an equivariant sheaf: for any point $x\in X$ we have a \Fre space  $\cF_x$, and for any $g\in G$ a continuous linear operator $\cF_{x}\to \cF_{gx}$. The space $\cF$ is the space of global sections of this sheaf.

Let us now sketch our proof of Theorem \ref{thmB}. Let $\tau\in \Rep^{\infty}(H)^{\times}$. Using Fourier transform on $L\times Z$, $\tau$ defines an $H$-imprimitivity system  on $\widehat{L}\times \widehat{Z}$, with the property that $L\times Z\subset H$ acts on the fiber at any point $(\chi,\psi)$ by the character $(\chi,\psi)$ .  The superscript $\times$ in $\Rep^{\infty}(H)^{\times}$ means that the jet of this sheaf at \Raul{$\widehat{L}\times 0$}  vanishes, and thus the system is completely determined by its restriction to $\widehat{L}\times \widehat{Z}^{\times}$.
Since $0\times  \widehat{Z}^{\times} \subset \widehat{L}\times \widehat{Z}^{\times}$ is a section transversal to the action of $H$, the stabilizer of any point in this system is $L\times Z$, and the action of this stabilizer is given by the point, the system really contains the same information as a sheaf on $\widehat{Z}^{\times}$, or equivalently a representation $\rho\in \Rep^{\infty}(Z)^{\times}$.

In Proposition \ref{prop:OmExt} we construct the action on the generators of $\Sp(W)$ by analogy with the formulas for the classical Weil representation given in \cite[(4.24)-(4.26)]{Fol}. In these formulas, we replace the character of $Z$ given by $\exp(2\pi i t)$ by the representation $\varpi$.  We prove that this is a representation, and also prove the uniqueness by evaluating each function in $\varpi$ at each point in $\widehat{Z}^{\times}$ and reducing to the classical case.

We prove Theorems \ref{thm:FJ} and \ref{thm:Ext} by arguing that $\Omega\in \Rep^{\infty}(H)^{\times}$ and $\varpi\in \Rep^{\infty}(Z)^{\times}$ generate their corresponding categories, and thus reducing the theorems on functors to theorems on these representations, which in turn are again proved by evaluation at each point in $\widehat{Z}^{\times}$.

\subsection{Acknowledgements}
D.G. was partially supported by ISF grant 249/17, and BSF grant 2019724.
S.S. was partially supported by NSF grants DMS-1939600 and DMS-2001537, and Simons foundation grant 509766.

\section{Preliminaries}
\DimaA{
\subsection{Nuclear \Fre spaces}

Let us begin with a brief recall on some standard facts about nuclear \Fre spaces. For more details we refer the reader to \cite[Appendix A]{CHM}.

If $V$ is a topological vector space, then we denote by $V'$ its topological dual, that is the space
of continuous linear functionals $V\to \bC$. We endow $V'$ with the strong dual topology (i.e. the topology of uniform convergence on bounded sets) and note that
$V'$ is a  separated topological vector space.

\par If $T: V \to W$ is a morphism of topological vector spaces, then we denote by $T': W'\to V'$ the corresponding
dual morphism.  A morphism $T: V\to W$ is called {\it strict}, provided that  $T$ induces an isomorphism
of topological vector spaces $V/\ker T \simeq \Im T$.

In this paper we consider mostly nuclear \Fre spaces, NF-spaces for short. If $F$ is a NF-space and $E\subset F$ is a closed subspace then both $E$ and $F/E$ (in the induced and quotient topologies respectively) are NF-spaces. Moreover, any surjective morphism of \Fre spaces is open (see  \cite[Theorem 17.1]{Tre}). Thus, a morphism of NF-spaces is strict if and only if it has closed image. The following lemma says that morphisms of finite corank are always strict.

\begin{lemma}[{\cite[Lemma A.1]{CHM}}]\label{lem:FinGood}
Let $T:E\to F$ be a continuous linear map between $NF$-spaces. Assume that the image $\Im T$ is \Raul{of} finite codimension in $F$. Then $T$ is a strict morphism.
\end{lemma}

Any closed subset of a \Fre space is a \Fre space, and any quotient of a \Fre space by a closed subspace is a \Fre space. The same holds for NF spaces. If $V$ is a \Fre space and $W$ is a NF space then the projective and the injective topologies on their tensor product coincide. We will denote the completion with respect to these topologies by $V\hot W$.
 The completed tensor product of NF spaces is an NF space, and the functor of tensor product with a fixed NF space is exact in the sense that it sends short exact sequences with strict morphisms to short exact sequences with strict morphisms. 

The category of NF spaces is stable under inverse limits over sequences. 
The duals of NF spaces are called DNF spaces. NF spaces are reflexive, and thus the duality between NF and DNF spaces is an equivalence of categories.}

\subsection{Abstract {\Fre} algebras}\label{sec:alg}

We will start by recalling some definitions and results from \cite[\S 2.3]{dCl}. For completeness, we will include proofs for some of the main results, but further details can be found in the aforementioned paper.

\begin{definition}
 Let $A$ be a Fr\'echet algebra, and let $E$ be a Fr\'echet $A$-module.
  Let $\alp:=\alp_E$ denote the canonical map $\alp_E:A\hot E\longrightarrow E$. We say that   $E$ is
 \begin{enumerate}[(i)]
  \item \emph{Non-degenerate} if
   $\{v\in E\, | \, A\cdot v =0\}=0$
and $\alp$ has dense image.
\item \emph{Differentiable} if it is non-degenerate  and $\alp$ is surjective.
\item \emph{Hereditarily differentiable} if any closed submodule $F\subset E$ is differentiable.
\item \emph{Factorizable} if it is non-degenerate  and  the restriction of $\alp$ to the algebraic tensor product $A\otimes E$ is surjective.
 \end{enumerate}

\end{definition}

\begin{definition}
 We say that a Fr\'echet algebra $A$
\DimaA{ is \emph{differentiable} if  it is differentiable as a module over itself. We say that $A$ 
  has the \emph{factorization property} if it  is differentiable  and every differentiable $A$-module is factorizable.} We say that $A$ is \emph{hereditary} if any differentiable module is hereditarily differentiable.
\end{definition}

\begin{remark}
 Notice that if $A$ is hereditary and has the factorization property then it is \emph{hereditarily factorizable,} that is every closed submodule $F$ of a factorizable module $E$ is factorizable.
\end{remark}
\Dima{
\begin{defn}
Let $A$ be a Fr\'echet algebra and $E$ be a non-degenerate \Fre $A$-module.
The \emph{space of Schwartz vectors} $E(A)$ of $E$\ is the image of $\alp_E$, with the topology that makes the  linear isomorphism $(A\hot E)/\Ker\alp_E\cong E(A)$ a homeomorphism.
\end{defn}

\begin{lemma}
 Let $A$ be a \DimaA{differentiable} Fr\'echet algebra. Then
 \begin{enumerate}[(i)]
  \item If $F$ is a \Fre space, then $A\hot F$ is differentiable.
  \item If $E$ is a non-degenerate Fr\'echet module, then the space of Schwartz vectors $E(A)$ is differentiable.
 \end{enumerate}
\end{lemma}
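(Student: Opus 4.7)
The plan is to handle the two parts in turn, with (i) supplying a key input for (ii).

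For (i), I would equip $A \hot F$ with the $A$-module structure coming from left multiplication on the first tensor factor, so that after the canonical identification $A \hot (A \hot F) \cong (A \hot A) \hot F$ the action map $\alp_{A \hot F}$ is exactly $\mu \hot \id_F$, where $\mu \colon A \hot A \to A$ denotes the multiplication of $A$. Each of the three defining conditions then reduces to a corresponding statement about $\mu$. Triviality of the annihilator: given $v \in A \hot F$ with $A \cdot v = 0$, I would apply any $\phi \in F'$ to produce $(\id_A \hot \phi)(v) \in A$, which is killed by all of $A$ and hence vanishes by non-degeneracy of $A$ over itself; since $F'$ separates points of $F$, we conclude $v=0$. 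Surjectivity: $\mu$ is surjective because $A$ is differentiable over itself, and this property passes to $\mu \hot \id_F$ because, in the nuclear \Fre setting in which we are working, completed tensor product preserves surjections. Density of the image is immediate from surjectivity.

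For (ii), recall that $E(A)$ is by definition $(A \hot E)/\Ker \alp_E$ with the quotient topology, which is again \Fre. I would first note that $E(A) \subset E$ is stable under the $A$-action, since $a \cdot (b \cdot e) = (ab) \cdot e$ still lies in $AE = E(A)$; consequently $\alp_{E(A)} \colon A \hot E(A) \to E(A)$ is well defined. The heart of the argument is the commutative square
\[
\begin{CD}
A \hot A \hot E @>\mu \hot \id_E>> A \hot E \\
@V \id_A \hot \alp_E VV @VV \alp_E V \\
A \hot E(A) @>\alp_{E(A)}>> E(A)
\end{CD}
\]
which commutes on elementary tensors because $(ab) \cdot e = a \cdot (b \cdot e)$. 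By part (i) the top arrow is surjective, and the left arrow is surjective because $\alp_E$ is a quotient map and in the nuclear \Fre setting $\hot$ preserves quotients. The common diagonal composition thus surjects onto $E(A)$, which forces $\alp_{E(A)}$ itself to be surjective. The annihilator condition on $E(A)$ is inherited from $E$, since $E(A)$ is an $A$-submodule of the non-degenerate module $E$; density of the image then follows from surjectivity.

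The main technical obstacle is the exactness of completed tensor product, i.e.\ the fact that a continuous surjection of nuclear \Fre spaces remains surjective after $- \hot X$ for any nuclear \Fre $X$. This standard but essential input is what lets me promote the factorization of $A$ as a module over itself to the analogous factorization of $A \hot F$ in part (i) and of $E(A)$ in part (ii). With it in hand the remainder is a clean diagram chase; without it, the analogous statement would fail for a general \Fre algebra.
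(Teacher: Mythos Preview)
Your proof is correct and follows essentially the same approach as the paper, which argues more tersely that surjectivity of $\alp_A$ yields surjectivity of $\alp_{A\hot F}=\alp_A\hot\id_F$, and that $E(A)$ inherits differentiability as a quotient of the differentiable module $A\hot E$. One minor remark: the preservation of surjections under $-\hot G$ that you flag as the key technical input in fact holds for arbitrary \Fre spaces (via Grothendieck's series representation of elements of the projective tensor product combined with the open mapping theorem), so your restriction to the nuclear setting is unnecessarily cautious, though harmless.
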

}
\begin{proof}
 For the first part of the lemma, we notice that, since $A$ is differentiable, the map $\alp_A:A\hot A\rightarrow A$ is surjective. Hence, the map $\alp_{A\hot E}: A\htimes(A\htimes F)\rightarrow A\htimes F$ is also surjective.

 For the second part, we notice that $E(A)$ is a quotient of $A\htimes E$ and hence the result follows from the above argument.
\end{proof}

\begin{definition}\label{def:AI}
 An \emph{approximate identity} on a \Fre algebra $A$ is a sequence $\{e_{n}\}$ in $A$ such that for all $a\in A$ we have \DimaA{that
 \[
  \lim_{n\rightarrow \infty} e_{n}\cdot a = \lim_{n\rightarrow \infty} a\cdot e_{n}=a.
 \]}
\end{definition}

\begin{lemma}\label{lem:App1}
 If $A$ is a \Fre algebra with an approximate identity $\{e_{n}\}$, and $E$ is a differentiable $A$-module, then
 \[
  \lim_{n\rightarrow \infty} e_{n}\cdot v = v \qquad \mbox{for all $v\in E.$}
 \]
\end{lemma}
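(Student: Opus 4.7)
The strategy is to reduce to pointwise convergence on a dense subset, then upgrade via an equicontinuity (Banach--Steinhaus) argument. Since $E$ is differentiable, the action map $\alpha_E \colon A \hot E \to E$ is surjective, so any $v \in E$ can be written as $v = \alpha_E(w)$ for some $w \in A \hot E$. Define continuous linear maps $\beta_n \colon A \hot E \to E$ by $\beta_n(w) := e_n \cdot \alpha_E(w)$. A direct computation on elementary tensors and continuity give the factorization $\beta_n = \alpha_E \circ (L_{e_n} \hot \id_E)$, where $L_{e_n} \colon A \to A$ denotes left multiplication by $e_n$. On an elementary tensor $a \otimes v'$, the approximate identity condition yields
\[
\beta_n(a \otimes v') = (e_n a) \cdot v' \longrightarrow a \cdot v' = \alpha_E(a \otimes v'),
\]
and by linearity the same pointwise convergence $\beta_n \to \alpha_E$ holds on the algebraic tensor product $A \otimes E$, which is dense in $A \hot E$.

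To extend this to all of $A \hot E$, I would invoke the Banach--Steinhaus theorem. Since $A$ is Fr\'echet and hence barrelled, and for each $a \in A$ the sequence $L_{e_n}(a) = e_n a$ converges (by Definition \ref{def:AI}) and is therefore bounded, the family $\{L_{e_n}\}_{n}$ is equicontinuous on $A$. The standard crossnorm description of the projective tensor product then shows that $\{L_{e_n} \hot \id_E\}_n$ is equicontinuous on $A \hot E$, and composing with the continuous map $\alpha_E$ gives equicontinuity of $\{\beta_n\}$. An equicontinuous family of continuous linear maps between \Fre spaces that converges pointwise on a dense subset converges pointwise everywhere; thus $\beta_n(w) \to \alpha_E(w)$ for all $w \in A \hot E$. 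Specializing to any $w$ with $\alpha_E(w) = v$ yields $e_n \cdot v = \beta_n(w) \to v$, as required.

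The only delicate point is the equicontinuity step: specifically, verifying that an equicontinuous family on $A$ induces an equicontinuous family on $A \hot E$ after tensoring with the identity. This is a standard property of the projective tensor topology (its defining seminorms are the projective crossnorms $p \otimes r$ of defining seminorms on the factors), so no real obstruction arises; everything else is a formal consequence of surjectivity of $\alpha_E$ and the defining property of $\{e_n\}$.
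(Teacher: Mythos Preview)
Your proof is correct. Both your argument and the paper's reduce to the claim that $e_n \cdot w \to w$ for all $w$ in $A \hot E$ (via surjectivity of $\alpha_E$), and both ultimately rest on Banach--Steinhaus. The difference lies in how the passage from $A$ to $A \hot E$ is handled. The paper invokes the series representation theorem \cite[Theorem 45.1]{Tre}: any $w \in A \hot E$ can be written as $\sum_j \lambda_j\, a_j \otimes v_j$ with $(a_j)$ a null sequence in $A$, so that $\{a_j\}\cup\{0\}$ is compact and $e_n a_j \to a_j$ uniformly in $j$ (this uniform convergence step itself tacitly uses equicontinuity of the $L_{e_n}$, i.e.\ Banach--Steinhaus). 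You instead lift the equicontinuity of $\{L_{e_n}\}$ directly to $\{L_{e_n} \hot \id_E\}$ via the crossnorm description of the projective topology, then conclude by the standard fact that an equicontinuous sequence converging on a dense subspace converges everywhere. Your route avoids the Treves representation theorem at the cost of checking the (indeed routine) tensor-stability of equicontinuity; the paper's route is more concrete but leans on a heavier structural result about the completed projective tensor product.
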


\begin{proof}
 Since every differentiable module is a quotient of a module of the form $A\htimes F$, it is enough to consider just spaces of  this form.
 From \cite[Theorem 45.1]{Tre}, any $w\in A\htimes F$ can be written in the form
 \[
  w=\sum_{j=1}^{\infty} \lambda_{j}a_{j}\otimes v_{j},
 \]
where $\sum_{j=1}^{\infty} |\lambda_{j}|\leq 1$ and $a_{j}$, $v_{j}$ are sequences that converge to $0$ in $A$ and $F$, respectively. In particular, the set $\{a_{j}\}\cup \{0\}$ is compact, and hence $e_{n}a_{j}\rightarrow a_{j}$ uniformly. It follows that
\[
 e_{n}w=\sum_{j=1}^{\infty} \lambda_{j} e_{n}a_{j}\otimes v_{j}\rightarrow \sum_{j=1}^{\infty} \lambda_{j}a_{j}\otimes v_{j}=w.
\]
\end{proof}

\begin{lemma}\label{lemma:A_approx_identity}
 Let $A$ be a differentiable Fr\'echet algebra with an approximate identity, and $E$ be a non-degenerate $A$-module. If we set
 \[
  \mathcal{F}(E)=\{F\subset E\,| \, \mbox{$F$ is closed} \}
 \]
 and
 \[
  \mathcal{F}(E(A))=\{F\subset E(A)\,| \, \mbox{$F$ is closed}\},
 \]
Then the map $F\mapsto \overline{F}$ from $\mathcal{F}(E(A))$ to $\mathcal{F}(E)$\Dima{, that sends each submodule into its closure in $\cF(E)$,} is injective, and increasing with respect to the natural order on $\mathcal{F}(E(A))$ and $\mathcal{F}(E)$.
\end{lemma}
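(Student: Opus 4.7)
The plan is to split the lemma into its two assertions: monotonicity is essentially formal from properties of closure, while injectivity is where the approximate identity does real work.

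\textbf{Monotonicity.} If $F_1 \subset F_2$ in $\mathcal{F}(E(A))$, then $\overline{F_1} \subset \overline{F_2}$ in $E$ since closure is an increasing operation. No hypothesis on $A$ is needed here.

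\textbf{Injectivity.} I would show the stronger statement that a closed submodule $F \subset E(A)$ is uniquely determined by its closure $\overline{F}\subset E$. Concretely, suppose $F_1,F_2\in \mathcal{F}(E(A))$ satisfy $\overline{F_1}=\overline{F_2}$; I want to prove $F_1\subset F_2$, and by symmetry the reverse. Fix $v\in F_1$. Since $v\in \overline{F_2}$, there is a sequence $w_n\in F_2$ with $w_n\to v$ in $E$. Now fix an approximate identity $\{e_m\}$ on $A$. The canonical map $\alpha_E:A\hot E\to E(A)$ is continuous, so for each fixed $m$,
\[
e_m w_n = \alpha_E(e_m\otimes w_n)\;\longrightarrow\; \alpha_E(e_m\otimes v)=e_m v\qquad\text{in }E(A).
\]
Since each $e_m w_n$ lies in $A\cdot F_2\subset F_2$ and $F_2$ is \emph{closed in $E(A)$}, this forces $e_m v\in F_2$ for every $m$. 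Finally, $E(A)$ is differentiable by the previous lemma, so by Lemma~\ref{lem:App1} applied to $E(A)$ we have $e_m v\to v$ in $E(A)$, and again closedness of $F_2$ in $E(A)$ gives $v\in F_2$.

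The crucial (and only) subtle point is the distinction between the $E$-topology and the finer $E(A)$-topology: approximating $v$ by $w_n$ succeeds only in $E$, while closedness of $F_2$ is only assumed in $E(A)$. The approximate identity bridges this gap in two steps: first one smears $w_n$ by $e_m$ to promote the approximation into $E(A)$, and then one sends $m\to\infty$ to recover $v$ using Lemma~\ref{lem:App1}. I expect this two-step limit exchange to be the main conceptual obstacle; it works because $e_m$ acts continuously from $E$ into $E(A)$ (so it improves the topology), and because differentiability of $E(A)$ together with the approximate identity property guarantees $e_m v\to v$ in the finer topology.
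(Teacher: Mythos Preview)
Your proof is correct and uses essentially the same mechanism as the paper: the action of $A$ upgrades $E$-convergence to $E(A)$-convergence (via continuity of $\alpha_E$), and then the approximate identity together with differentiability of $E(A)$ recovers the original vector. The paper packages this slightly differently, proving the identity $\overline{F}(A)\subset F$ and deducing that $F$ is the $E(A)$-closure of $\overline{F}(A)$, which gives an explicit recovery formula for $F$ from $\overline{F}$; your direct element-chasing argument is a cleaner route to injectivity alone.
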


\begin{proof}
 It's clear that the map $F\mapsto \overline{F}$ is increasing. Now let $F$ be a closed submodule of $E(A)$. Then the \Dima{restriction of $\alp_E$\ to $A\hot F$ is a continuous map from $A\htimes F$} onto $F$. Now, since $A\htimes F$ is dense in $A\htimes \overline{F}$ it follows that
\[
 \Dima{\alp_E}(A\htimes \overline{F})=\overline{F}(A)\subset F.
\]
 Now, since $E$ is non-degenerate, we have that if $A\cdot v=0$ for $v\in \overline{F},$ then $v=0$. On the other hand, since $E(A)$ is differentiable, we have, from Lemma \ref{lem:App1}, that $e_{n}\cdot v \rightarrow v$ for all $v\in E(A)$ and hence for all $v\in F.$ Consequently, $F$ and $\overline{F}$ are two non-degenerate $A$-modules, and hence $\overline{F}(A)$ is differentiable. But then
 \[
  \overline{F}(A)=\Dima{\alp_E}(A\htimes \overline{F}(A))\subset \Dima{\alp_E}(A\htimes F)=F(A).
 \]
It follows that $F(A)=\overline{F}(A).$ Therefore, we can recover $F$ from $\overline{F}$ as the closure in $E(A)$ of $\overline{F}(A)$. From this we conclude that the map $F\mapsto \overline{F}$ has to be injective.
\end{proof}

\Dima{
\begin{defn}
We say that a Fr\'echet $A$-module $E$ is \emph{topologically irreducible} if every non-zero submodule is dense in $E$.
\end{defn}}

\begin{corollary}
 If $E$ is topologically irreducible as an $A$-module, then so is $E(A).$
\end{corollary}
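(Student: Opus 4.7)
The plan is to translate topological irreducibility of $E(A)$ back to that of $E$ via the injective, order-preserving closure map $F \mapsto \overline{F}$ from Lemma \ref{lemma:A_approx_identity}. Concretely, given a nonzero closed $A$-submodule $F \subset E(A)$, I would form its closure $\overline{F} \subset E$ and note that $\overline{F}$ is a closed $A$-submodule of $E$ which contains $F$ and is therefore nonzero. Topological irreducibility of $E$ then forces $\overline{F} = E$.

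Once $\overline{F} = E$, the construction of Schwartz vectors gives $\overline{F}(A) = E(A)$. The proof of Lemma \ref{lemma:A_approx_identity} already established the key identity $F(A) = \overline{F}(A)$, so $F(A) = E(A)$. Combined with the trivial inclusions $F(A) \subset F \subset E(A)$, this forces $F = E(A)$, which is exactly the claim that every nonzero closed submodule of $E(A)$ is dense (in fact equal) in $E(A)$.

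The only point to verify is that the identity $F(A) = \overline{F}(A)$ is licensed in our setting, i.e.\ that $A$ is a differentiable Fr\'echet algebra with approximate identity and that $E$ is non-degenerate; these are precisely the standing hypotheses of Lemma \ref{lemma:A_approx_identity} and hence are in force. I do not anticipate any significant obstacle: the statement is really a direct corollary of the injective correspondence between closed submodules of $E(A)$ and their closures in $E$, and the argument collapses to the short chain of inclusions above.
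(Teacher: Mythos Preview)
Your argument is correct and is exactly the intended deduction from Lemma~\ref{lemma:A_approx_identity}. One small streamlining: you do not need to reach back into the proof of the lemma for the identity $F(A)=\overline{F}(A)$; the bare injectivity statement already suffices, since if $F\subsetneq E(A)$ were a proper nonzero closed submodule then $\overline{F}=E=\overline{E(A)}$ would violate injectivity of $F\mapsto\overline{F}$.
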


\begin{lemma}\label{lemma:B_A-algebra_with_approximate_identity} 
 Let $B$ be a \Fre $A$ algebra with an approximate identity. 
\DimaA{ Suppose that $B$ is differentiable as a module over itself.
 Then any closed $B$-submodule of $B\htimes E$ is $A$-invariant. In particular, if $E$ is a differentiable $B$-module, then the natural action of $A$ on $B\htimes E$ descends to an action on $E$.}
\end{lemma}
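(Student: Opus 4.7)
The plan is to exploit the approximate identity in $B$ to rewrite the $A$-action on $B\htimes E$ as a limit of $B$-actions, thereby forcing any closed $B$-submodule to be $A$-stable.

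More precisely, let $F\subset B\htimes E$ be a closed $B$-submodule, fix $a\in A$ and $w\in F$. Since $B$ is a differentiable $A$-algebra, the tensor product $B\htimes E$ is a differentiable $B$-module (the canonical map $B\htimes(B\htimes E)\to B\htimes E$ is surjective because $\alpha_B$ is), so Lemma \ref{lem:App1} applied to the approximate identity $\{e_n\}\subset B$ yields $e_n\cdot w\to w$ in $B\htimes E$. Continuity of the $A$-action together with the compatibility identity $a\cdot(e_n\cdot w)=(ae_n)\cdot w$ (which holds because the $A$-module structure on $B\htimes E$ is induced from the $A$-module structure on the first factor, and $B$ is an $A$-algebra) then gives
\[
a\cdot w \;=\; a\cdot\lim_{n\to\infty} e_n\cdot w \;=\; \lim_{n\to\infty}(ae_n)\cdot w.
\]
Each element $ae_n$ lies in $B$, so $(ae_n)\cdot w\in F$ by $B$-stability of $F$; since $F$ is closed, the limit $a\cdot w$ also lies in $F$. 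This proves the first assertion.

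For the second assertion, let $V$ be a differentiable $B$-module, so that the action map $\alpha_V\colon B\htimes V\twoheadrightarrow V$ is a continuous surjection. Its kernel $K=\Ker\alpha_V$ is a closed $B$-submodule of $B\htimes V$, hence $A$-invariant by the first part. Consequently the $A$-action on $B\htimes V$ descends through $\alpha_V$ to a well-defined continuous $A$-action on $V$, as desired.

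The only real subtlety is verifying the compatibility $a\cdot(b\cdot w)=(ab)\cdot w$ in $B\htimes E$; this is the step that genuinely uses that $B$ is an $A$-\emph{algebra} rather than merely an $A$-module, and it is proven by first checking it on elementary tensors $b'\otimes e$ (where both sides equal $(ab\,b')\otimes e$) and then extending by continuity and density. Once this is in hand, the argument is a direct application of the approximate identity together with the closure of $F$.
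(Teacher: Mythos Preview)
Your proof is correct and follows essentially the same approach as the paper: verify the compatibility $a\cdot(b\cdot w)=(ab)\cdot w$ on pure tensors and extend by density, then use the approximate identity together with Lemma~\ref{lem:App1} to write $a\cdot w=\lim_n(ae_n)\cdot w\in F$, and finally apply the first part to the kernel of $\alpha_V$. The only cosmetic difference is that you make explicit the fact that $B\htimes E$ is a differentiable $B$-module before invoking Lemma~\ref{lem:App1}, which the paper leaves implicit.
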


\begin{proof}
 First notice that if $w=\tilde{b}\otimes v \in B\otimes E$, then for all $a\in A$, $b\in B$ we have that
 \[
  a(bw)=a(b\tilde{b})\otimes v=(ab)\tilde{b}\otimes v = (ab)w.
 \]
Since linear combinations of pure tensors are dense in $B\htimes E$, we conclude that for all $a\in A,$ $b\in B$ and $w\in B\htimes E$ we have that
\[
 a(bw)=(ab)w.
\]
Now let $F\subset B\htimes E$ be a closed $B$-submodule and let $(e_{n})_{n}$ be an approximate identity in $B$. Then, for all $w\in F$ and $a\in A$ we have, according to Lemma \ref{lem:App1}, that
\begin{align*}
 a\cdot w  = a(\lim_{n\rightarrow \infty} e_{n}\cdot w)
           = \lim_{n\rightarrow \infty} a(e_{n}\cdot w)
           = \lim_{n\rightarrow \infty} ((a e_{n})\cdot w) \in F
\end{align*}
since $ae_{n}\in B$ for all $n$ and $F$ is a closed $B$-submodule. Letting $F$ be the kernel of the multiplication map $B\htimes E\rightarrow E$ we see that if $E$ is differentiable, then the action of $A$ on $B\htimes E$ indeed descends to an action of $A$ on $E$.
\end{proof}

\subsection{Schwartz functions on Nash manifolds}\label{sec:Sc}
We will use the theory of Schwartz functions and tempered distributions on Nash  manifolds, see e.g., \cite{dCl,AG_Sc}.
Nash manifolds are smooth semi-algebraic manifolds, e.g. a union of connected components of the manifold of real points of an algebraic manifold defined over $\R$.

We denote the space of complex valued Schwartz functions on a Nash manifold $Y$ by $\Sc(Y)$. This space has a natural structure of a commutative \Fre algebra\DimaA{, given by pointwise product of functions}.
\DimaA{If $Y$ is affine, then $\Sc(Y)$ consists of infinitely smooth functions $f$ on $Y$ such that $|Df|$ is bounded for every algebraic differential operator $D$ on $Y$. The \Fre topology is given by the family of seminorms $\sigma_D(f)=\sup_{y\in Y} |Df(y)|.$}
 For a Nash group $G$, with fixed left-invariant measure $dg,$ $\Sc(G)$ also has a natural structure of an algebra under convolution, which is not commutative unless $G$ is commutative.
\DimaA{If $X$ is compact then $\Sc(X)$ is $C^{\infty}(X)$. For any $X$, $\Sc(X)$ is a nuclear \Fre space.}

\begin{thm}[{\cite[Theorem 1.2.4(i)]{dCl} or \cite[Theorem 5.4.3]{AG_Sc}}]\label{thm:U}
Let $X$ be a Nash manifold and $U\subset X$ be an open Nash subset. Then the extension by zero defines an isomorphism of $\Sc(U)$ with the \DimaA{closed} subspace of $\Sc(X)$ consisting of functions that vanish to infinite order outside $U$.
\end{thm}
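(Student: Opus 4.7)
The plan is to show both that extension-by-zero lands in $\Sc(X)$ with image contained in the subspace $\Sc_Z^\infty(X):=\{\psi\in\Sc(X):\psi\text{ vanishes to infinite order on }Z\}$, where $Z:=X\setminus U$, and that restriction to $U$ furnishes its continuous inverse. The key geometric input will be a non-negative Nash function $f\colon X\to[0,\infty)$ with $f^{-1}(0)=Z$. One can construct such $f$ by first embedding $X$ as a closed Nash submanifold of some $\mathbb{R}^N$ (the standard affine reduction for Nash manifolds, handled up to a finite open Nash cover with a Nash partition of unity) and then taking a sum of squares of Nash equations cutting out $Z$ in $X$. The \L{}ojasiewicz inequality for semi-algebraic functions then yields, on every compact $K\subset X$, bounds of the form $d(x,Z)^a\le C\,f(x)\le C'\,d(x,Z)^b$ for positive constants depending on $K$.

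Next, I would describe the Schwartz topology on $U$ by seminorms of the form $\sup_{x\in U}|P(x)\,f(x)^{-k}D^\alpha\phi(x)|$, where $P$ ranges over polynomials in the ambient coordinates, $\alpha$ over multi-indices, and $k\ge 0$: the factor $f^{-k}$ encodes decay at the ``boundary'' $Z$, while $P$ controls decay at infinity in $\mathbb{R}^N$. With this description, the theorem splits into two parallel seminorm estimates. First, for $\phi\in\Sc(U)$, the bounds $|f^{-k}D^\alpha\phi|\le C_{k,\alpha}$ together with \L{}ojasiewicz imply that each $D^\alpha\phi$ decays faster than every power of $d(x,Z)$ as $x\to Z$; a Whitney-type extension argument then shows that the zero extension $e(\phi)$ is smooth on $X$ with all derivatives vanishing on $Z$, and polynomial decay at ambient infinity transfers directly, so $e(\phi)\in\Sc_Z^\infty(X)$. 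Second, for $\psi\in\Sc_Z^\infty(X)$, pointwise infinite-order vanishing of all $D^\alpha\psi$ along $Z$ combined with \L{}ojasiewicz yields uniform bounds $|D^\alpha\psi(x)|\le C_{k,\alpha}\,f(x)^k$ near $Z$, for every $k$ and $\alpha$; together with the ambient Schwartz seminorms on $X$ this gives precisely the seminorm bounds defining $\Sc(U)$ for $\psi|_U$.

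The main obstacle will be the passage in the second estimate from pointwise infinite-order vanishing on $Z$, which is a jet-theoretic condition, to uniform tempered decay in powers of $f$ on tubes around $Z$, which is a Schwartz-type uniform condition. This is exactly what the \L{}ojasiewicz inequality for Nash functions is designed to accomplish, and it is the one non-elementary ingredient of the argument. Once both seminorm estimates are in hand, continuity of the extension map $e$ and of the restriction $r$ is immediate, and the compositions $r\circ e$ and $e\circ r$ are manifestly the identity on their respective domains; alternatively, once one direction is known to be a continuous bijection, the open mapping theorem for Fr\'echet spaces supplies the topological isomorphism.
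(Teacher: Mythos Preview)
The paper does not give its own proof of this theorem: it is stated as a citation to \cite[Theorem 1.2.4(i)]{dCl} and \cite[Theorem 5.4.3]{AG_Sc}, with no argument supplied. So there is no ``paper's proof'' to compare against.

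That said, your sketch is essentially the argument one finds in the cited references, particularly \cite{AG_Sc}: reduce to the affine case via a finite Nash cover and partition of unity, produce a non-negative Nash function $f$ with zero set $Z=X\setminus U$, use the \L{}ojasiewicz inequality to compare $f$ with $d(\cdot,Z)$, and then push seminorm estimates in both directions. You have correctly identified the one genuinely non-formal step, namely passing from pointwise infinite-order vanishing along $Z$ to uniform decay in powers of $f$; this is indeed where \L{}ojasiewicz (together with Taylor's theorem with remainder applied along segments towards $Z$) does the work. One small caution: not every Nash manifold in the sense of \cite{AG_Sc} is globally affine, so the global embedding into $\R^N$ and the global Nash equations for $Z$ are only available after restricting to members of an affine cover; you gesture at this but should make sure the partition-of-unity gluing is carried through carefully, since the Schwartz seminorms on $U$ must be assembled from the local pieces. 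With that bookkeeping in place, your outline is sound.
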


\DimaA{

\begin{thm}[{\cite[Corollary 2.6.3]{AGRhamShap}}]\label{thm:prod}
Let $X,Y$ be Nash manifolds. Then $\Sc(X)\hot \Sc(Y)\cong \Sc(X\times Y)$.
\end{thm}
}

\begin{theorem}[{\cite[\S 2.3]{dCl}}]
 If $G$ is a Nash group and $X$ is a Nash manifold, then both $\Sc(G)$ and $\Sc(X)$ are hereditary, have the factorization property and an approximate identity.
\end{theorem}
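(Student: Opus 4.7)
The plan is to establish the three properties—approximate identity, factorization, and hereditary—separately, handling $\Sc(X)$ under pointwise multiplication and $\Sc(G)$ under convolution in parallel, and addressing the approximate identity first since it feeds into the other two via Lemma \ref{lem:App1}.

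For the approximate identity on $\Sc(X)$, I would exhaust $X$ by an increasing sequence of compact semi-algebraic subsets $K_n$ and use Nash partitions of unity to produce Nash functions $e_n \in \Sc(X)$ with $0 \le e_n \le 1$, $e_n \equiv 1$ on $K_n$, and tempered bounds on all derivatives uniform in $n$. The Leibniz rule together with the rapid decay of $f$ and its derivatives outside $K_n$ then yields $e_n f \to f$ in the Schwartz topology for every $f \in \Sc(X)$. For $\Sc(G)$ under convolution, a standard mollifier construction suffices: pick $\phi \in \Sc(G)$ nonnegative with total integral $1$ and support near the identity, rescale in a coordinate chart near $e \in G$ to obtain a sequence $\phi_n$ concentrating at $e$, and verify $\phi_n * f \to f$ using the continuity of translation on $\Sc(G)$.

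For the factorization property, the core ingredient is a Dixmier--Malliavin type result asserting that every Schwartz function on $\R^N$ is a finite sum of pointwise products of Schwartz functions. Via a closed Nash embedding $X \hookrightarrow \R^N$, which realizes $\Sc(X)$ as a quotient of $\Sc(\R^N)$ modulo functions flat on $X$, this factorization transfers to $\Sc(X)$. Given a differentiable module $E$ and $v \in E$, one writes $v = \alp_E(w)$ with $w \in \Sc(X) \hot E$, approximates $w$ by algebraic tensors, and uses the factorization of $\Sc(X)$ together with continuity of the action to produce a genuine finite-sum representation $v = \sum_i \phi_i \cdot v_i$. For $\Sc(G)$ under convolution, the parallel argument invokes the classical Dixmier--Malliavin theorem for Lie groups.

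For the hereditary property, let $E$ be differentiable and let $F \subset E$ be a closed submodule. By Lemma \ref{lem:App1}, $e_n \cdot v \to v$ for every $v \in F$. Applying the factorization property to each $e_n$ to write $e_n = \sum_i a_{n,i} b_{n,i}$ with $a_{n,i}, b_{n,i} \in \Sc(X)$, one obtains $e_n \cdot v = \sum_i a_{n,i} \cdot (b_{n,i} \cdot v) \in \alp_F(\Sc(X) \otimes F)$, since $F$ is a submodule; a density and non-degeneracy argument then upgrades this to surjectivity of $\alp_F$, giving differentiability of $F$. The main obstacle I anticipate is ensuring that the Dixmier--Malliavin factorization is compatible with the tempered seminorms when transferring from $\R^N$ to a general Nash manifold—one must simultaneously control supports and polynomial growth of the factors—and, analogously, adapting the Lie-group Dixmier--Malliavin argument from compactly supported smooth functions to Schwartz functions on $G$.
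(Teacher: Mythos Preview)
The paper does not prove this theorem at all; it is simply quoted from du Cloux \cite[\S 2.3]{dCl} without argument, so there is no in-paper proof to compare against. That said, your sketch has a substantive gap worth flagging.

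Your hereditary argument does not close. Writing $e_n = \sum_i a_{n,i} b_{n,i}$ and observing $e_n \cdot v \in \alp_F(\Sc(X)\otimes F)$ only shows that the image of $\alp_F$ is \emph{dense} in $F$, since $e_n \cdot v \to v$. Differentiability of $F$ requires $\alp_F:\Sc(X)\hot F \to F$ to be \emph{surjective}, and ``a density and non-degeneracy argument then upgrades this'' is precisely the step that needs work and is not supplied. The actual Dixmier--Malliavin mechanism is stronger than factorizing the approximate identity: for a given $v$ it produces a representation $v = \sum_i \phi_i \cdot v_i$ in which the $v_i$ lie in the closed $\Sc(X)$-submodule (resp.\ $\Sc(G)$-submodule) generated by $v$ itself, so automatically in $F$. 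Your factorization-for-modules step has the same issue: approximating $w \in \Sc(X)\hot E$ by algebraic tensors and invoking continuity yields only a limit of finite sums, not a finite sum; one must instead run the Dixmier--Malliavin construction directly on $v$. These are exactly the points where du Cloux's treatment is delicate, and where the tempered/Schwartz adaptation of the classical $C_c^\infty$ argument requires care.
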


\begin{lemma}[{\cite[Example 2.3.3]{dCl}}]
Let $G$ be a Nash group. Given a smooth, moderate growth, representation of $G$ on a Fr\'echet space $E,$ we can define a natural action of $\Sc(G)$ on this space that makes it a differentiable module. Furthermore, \DimaA{this defines an equivalence of categories between smooth, moderate growth, representations of $G$ in \Fre spaces and differentiable Fr\'echet  $\Sc(G)$-modules.}
\end{lemma}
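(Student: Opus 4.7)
The plan is to construct explicit functors in both directions and verify they are mutually inverse. Given a smooth moderate growth representation $(\pi,E)$ of $G$, I will define the $\Sc(G)$-action by the Bochner integral
$$\pi(f)v := \int_G f(g)\,\pi(g)v\,dg, \qquad f\in\Sc(G),\ v\in E.$$
Moderate growth of $\pi$ combined with Schwartz decay of $f$ ensures convergence: for each continuous seminorm $\|\cdot\|_\alpha$ on $E$ there exist $N$ and a seminorm $\|\cdot\|_\beta$ with $\|\pi(g)v\|_\alpha \leq (1+\|g\|)^N\|v\|_\beta$, and this is swallowed by any Schwartz seminorm of $f$. Joint continuity of the pairing then gives a continuous map $\alpha_E:\Sc(G)\hot E\to E$, and a Fubini argument on $G\times G$ shows $\pi(f_1 * f_2)=\pi(f_1)\pi(f_2)$, so $E$ becomes an $\Sc(G)$-module.

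Non-degeneracy is handled using an approximate identity $\{e_n\}\subset \Sc(G)$, chosen so that $e_n$ is supported in a shrinking neighborhood of the identity and $\int e_n=1$. Smoothness of $\pi$ gives $\pi(e_n)v\to v$ for every $v\in E$, which implies both that $\alpha_E$ has dense image and that $\{v: \Sc(G)\cdot v=0\}=0$. The main obstacle is differentiability, i.e.\ surjectivity of $\alpha_E$; this is the Dixmier--Malliavin--du Cloux factorization theorem, asserting that every smooth moderate growth vector is a finite sum $v=\sum_i \pi(f_i)w_i$ with $f_i\in\Sc(G)$, $w_i\in E$. This theorem crucially uses moderate growth and is where the technical weight of the lemma lies; I would import it as a black box from du Cloux.

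For the converse, let $E$ be a differentiable $\Sc(G)$-module. Since $\Sc(G)$ has the factorization property (recalled in the previous theorem), every $v\in E$ can be written as $v=f\cdot w$ with $f\in\Sc(G)$, $w\in E$. I define
$$\pi(g)v := (L_g f)\cdot w, \qquad L_g f(h):=f(g^{-1}h).$$
Well-definedness is the delicate point: if $f_1\cdot w_1 = f_2\cdot w_2$, I must show $(L_g f_1)\cdot w_1 = (L_g f_2)\cdot w_2$. I would verify this by first checking the identity on elements of the form $(k*f)\cdot w=k\cdot(f\cdot w)$ for $k\in\Sc(G)$ (using the associativity of the module action and the fact that $L_g(k*f)=(L_g k)*f$), then passing to the limit via an approximate identity applied from the left and using non-degeneracy.

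Smoothness and moderate growth of $\pi$ follow from the corresponding properties of left translation on $\Sc(G)$: the orbit map $g\mapsto L_g f$ is a smooth map $G\to \Sc(G)$ of moderate growth, and factoring through the continuous action $\Sc(G)\times E\to E$ preserves these properties. Group homomorphism property follows from $L_{gh}=L_g\circ L_h$. Finally, the two constructions are mutually inverse: starting from $(\pi,E)$, the recovered action sends $g$ to the operator $v\mapsto (L_g f)\cdot w$ where $v=\int f(h)\pi(h)w\,dh$, and a change of variable $h\mapsto gh$ identifies this with $\pi(g)v$; conversely, starting from a module $E$ and integrating the reconstructed representation against $f\in\Sc(G)$ recovers the original action, again by approximate identity arguments. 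The hard part, as noted, is the Dixmier--Malliavin factorization; everything else is an approximate-identity bookkeeping exercise built on the formalism already developed in Section \ref{sec:alg}.
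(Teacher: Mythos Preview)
The paper does not supply its own proof of this lemma; it is simply cited from \cite[Example 2.3.3]{dCl}. Your sketch is a correct outline of the standard argument, and the identification of the Dixmier--Malliavin/du Cloux factorization as the substantive step is exactly right.

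Two minor remarks. First, the factorization property as stated in the paper only guarantees that each $v$ is a \emph{finite sum} $\sum_i f_i\cdot w_i$, not a single product $f\cdot w$; your definition of $\pi(g)$ extends by linearity, so this is cosmetic. Second, your well-definedness argument for the converse direction can be streamlined using the machinery already set up in \S\ref{sec:alg}: let $G$ act on $\Sc(G)\hot E$ by left translation on the first factor; the kernel of the surjection $\alpha_E:\Sc(G)\hot E\to E$ is a closed $\Sc(G)$-submodule, and the approximate-identity argument of Lemma~\ref{lemma:B_A-algebra_with_approximate_identity} (with $G$ playing the role of $A$ acting through left translation) shows this kernel is $G$-stable, so the action descends. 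This replaces your explicit check that $(L_gf_1)\cdot w_1=(L_gf_2)\cdot w_2$ with a one-line invocation and is closer in spirit to how du Cloux organizes things.
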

\DimaA{We denote these equivalent categories by $\Sc\mod(G)$, and the subcategory consisting of nuclear \Fre modules by $\Rep^{\infty}(G)$.} If $Z$ is an arbitrary Nash manifold, we will denote by $\Sc\mod(Z)$ the category of differentiable Fr\'echet $S(Z)$-modules.

\subsection{Imprimitivity systems}\label{sec:Imp}

In this section we recall and slightly extend du-Cloux theory of smooth imprimitivity systems from \cite[\S 2.4-2.5]{dCl}.

\begin{definition}
Let $X$ be a Nash $G$-manifold. An \emph{imprimitivity system of base $X$} on a differentiable $G$-module $E$ is an action of the algebra $\Sc(X)$ such that
\[
(g\cdot \alp) v= g (\alp (g^{-1} v)), \qquad \mbox{for all $g\in G,$ $\alp\in \Sc(X),$ $v\in E.$}
\]
We will denote the category of imprimitivity systems for $(G,X)$ by $\Rep_{G,X}$ and consider the categories $\Rep_{G,X}^{nd}$ and $\Sc\mod_{G,X}$ consisting of the systems for which the action of $\Sc(X)$ on $E$ is non-degenerate and differentiable, respectively.
\end{definition}
Intuitively, we think of an imprimitivity system on $X$ as of a $G$-equivariant cosheaf on $X$.
\\
Denote $\cA:=\Sc(G\times X)$.  The space $\cA$ has a natural structure of an imprimitivity system given by
\begin{equation}
(g\cdot \alp)(g_1,x):=\alp(g^{-1}g_1,x)\Dima{\text{ and } (\psi\cdot \alp)(g,x)=\psi(gx)\alp(g,x)},
\end{equation}
 for all $g,g_1\in G, x\in X, \alp\in \cA,\psi\in \Sc( X)$.
Given an imprimitivity system $E$, we define a natural map $\cA\otimes E\to E$ by
\begin{equation}\label{=Aim}
\alp\otimes v \mapsto \int_G(\alp^g \cdot gv)dg, \text{ where }\alp^g\in \Sc(X) \text{ is given by } \alp^g(x):=\alp(g,g^{-1}x)
\end{equation}
It is easy to check that substituting $\cA$ for $E$, we obtain an algebra structure on $\cA$. Explicitly, the
  multiplication is given by:
\[
\alp_{1}\ast\alp_{2}(g,x) =\int_{G}\alp_{1}(g_{1},g_{1}^{-1}gx)\alp_{2}(g_{1}^{-1}g,x)\,dg_{1}.
\]
Furthermore, for any imprimitivity system $E$, \eqref{=Aim} defines an $\cA$-module structure on $E$.

\begin{lemma}
\begin{enumerate}[(i)]
\item The \Fre algebra $\cA$ is differentiable, hereditary, has an approximate identity and satisfies the factorization property.
\item The $\cA$-module structure \eqref{=Aim} defines an equivalence between the category $\Sc \mod_{G,X}$ of smooth imprimitivity systems and the category of differentiable $\cA$-modules.
\end{enumerate}
\end{lemma}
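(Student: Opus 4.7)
The proof splits into the two parts of the lemma and largely follows the strategy of du Cloux \cite[\S 2.4]{dCl}.

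For part (i), the plan is to transfer the relevant properties from $\Sc(G)$ and $\Sc(X)$ to $\cA$, exploiting the canonical identification $\cA \cong \Sc(G)\hot\Sc(X)$ of Fr\'echet spaces, which holds because Schwartz spaces on Nash manifolds are nuclear. Nuclearity of $\cA$ is then automatic. For the approximate identity, I would take approximate identities $\{e_n\}\subset \Sc(G)$ in the convolution structure and $\{f_n\}\subset \Sc(X)$ in the pointwise structure, and verify that $E_n(g,x) := e_n(g)f_n(x)$ furnishes an approximate identity in $\cA$. By density of pure tensors this reduces to checking the limit on elements of the form $\phi\otimes\psi$, which unwinds via the explicit twisted convolution formula together with continuity of the $G$-action on $\Sc(X)$.

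The factorization and hereditary properties are the most delicate points. For factorization, one exploits the fact that any differentiable $\cA$-module $V$ admits a restricted $\Sc(G)$-module structure (obtained by contracting against an approximate identity in $\Sc(X)$) and a restricted $\Sc(X)$-module structure (obtained in the same spirit), much as in Lemma \ref{lemma:B_A-algebra_with_approximate_identity}. The factorization properties of $\Sc(G)$ and $\Sc(X)$ separately should then combine, via a double factorization argument, to yield factorization of $\cA$. The hereditary property follows because closed submodules inherit both separate actions, so the same argument applies to them.

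For part (ii), the direction $\Rep_{G,X}^{nd}\to \cA\text{-mod}$ is given by \eqref{=Aim}; I would verify that this formula defines an associative, continuous, differentiable module structure and is functorial. For the inverse direction, given a differentiable $\cA$-module $V$, I would recover the $G$- and $\Sc(X)$-actions by approximation: for $\phi\in\Sc(G)$ set $\rho(\phi)v := \lim_{n}(\phi\otimes f_n)\cdot v$, and for $\psi\in\Sc(X)$ set $\psi\cdot v := \lim_n (e_n\otimes\psi)\cdot v$. Convergence follows from Lemma \ref{lem:App1} applied to the restricted actions, and differentiation of the $\Sc(G)$-action yields the smooth $G$-action. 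The imprimitivity relation $(g\cdot\alpha)v = g(\alpha(g^{-1}v))$ is then read off from the explicit twisted multiplication in $\cA$, and the two functors are mutually quasi-inverse by a dense-subspace computation using the approximate identity $\{E_n\}$.

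The main obstacle will be the factorization property, since even for $\Sc(G)$ alone this is a nontrivial theorem; extending it to the smash-product algebra $\cA$ requires a careful combination of factorization on the $G$- and $X$-factors together with control of the twisted multiplication. Once factorization is in place, the equivalence in (ii) follows relatively mechanically from the approximate-identity technology developed in \S\ref{sec:alg}.
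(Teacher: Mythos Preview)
Your plan is correct and uses the same ingredients as the paper (tensor-product approximate identity $e_n\otimes f_n$, transfer of factorization from $\Sc(G)$ and $\Sc(X)$, recovery of the separate actions via approximate identities), but the paper organizes the argument differently and this sidesteps the obstacle you flag. Rather than proving factorization and hereditariness directly in (i), the paper first shows $\cA\in\Sc\mod_{G,X}$ (using the change of variables $(g,x)\mapsto(g,g^{-1}x)$ to untwist the $\Sc(X)$-action), then establishes the equivalence (ii), and only then reads off factorization and hereditariness of $\cA$ as immediate consequences of the corresponding properties of $\Sc(G)$ and $\Sc(X)$ applied through the equivalence. For the inverse functor in (ii), instead of defining the separate actions by limits $\lim_n(\phi\otimes f_n)\cdot v$, the paper puts the obvious imprimitivity structure on $\cA\hot E$ (acting on the first factor), shows the kernel of the multiplication map $m:\cA\hot E\to E$ is stable under $\Sc(G)$ and $\Sc(X)$ via the approximate-identity trick $x\cdot v=\lim_n xe_nv$, and passes to the quotient; this avoids having to verify convergence in $E$ directly. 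Your route would work, but the paper's ordering makes the ``main obstacle'' you identified essentially disappear.
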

\begin{proof}
We will start by showing $\cA\in \Sc\mod_{G,X}$. Since $\cA=\Sc(G\times X)\cong \Sc(G) \hot \Sc(X)$ as $G$-modules, $\cA\in \Sc\mod(G)$. After the change of variables $(g,x)\mapsto (g,g^{-1}x)$, the action of $\Sc(X)$ on $\cA$ becomes multiplication on the $\Sc(X)$ coordinate. Thus $\cA\in \Sc\mod(\Sc(X))$ and thus $A\in \Sc\mod_{G,X}$.

Let us now show that any $E\in \Sc\mod_{G,X}$ is differentiable as an $\cA$-module. Let $\{x_n\}$ and $\{\theta_n\}$ be approximate identities in $\Sc(G)$ and $\Sc(X)$ respectively, and let $e_n(g,x):=x_n(g)\theta_n(x)$. Then, by Lemma \ref{lem:App1},
\DimaA{
\[
 \lim_{n\to \infty} \theta_n\cdot v = v,\quad \lim_{n\to \infty} x_n\cdot v = v,
\]
and thus, by the standard bilinear argument, we conclude that
\begin{equation}\label{=1v}
 \lim_{n\to \infty} e_n\cdot v = v\quad \text{ for any } v\in E.
\end{equation}}
Thus, $E$ is non-degenerate. Taking $\cA$ as $E$ we also obtain that $e_n$ is an approximate identity in $\cA$. Finally, since $\Sc(G)$ and $\Sc(X)$ satisfy the factorization property, we have $E  =\Sc(X)E=\Sc(G)\Sc(X)E$ and hence $AE=E$, that is $E$ is differentiable as an $\cA$-module.

Now let $E\in \Sc\mod(\cA)$, and let $V$ denote the kernel of the  epimorphism $m:\cA\hot E\onto E$. The imprimitivity system on $\cA$ defines an imprimitivity system on $\cA\hot E$ by action on the $\cA$ coordinate. Then $V$ is a subsystem, {\it i.e.} is stable under multiplication by $\Sc(X)$ and $\Sc(G)$. Indeed, for any $v\in V, x\in \Sc(X)$, and $\theta\in \Sc(G)$ we have
$$x\cdot v=\lim_{n\to \infty} xe_nv\in V \quad \text{ and }\theta\cdot v=\lim_{n\to \infty} \theta e_nv\in V.$$
Now, we define the imprimitivity system on $E$ using the identification $m:(\cA\hot E)/V \cong E$.

Finally, since $\Sc(X)$ and $\Sc(G)$ are hereditary and satisfy the factorization property, so does $\cA$.
\end{proof}

\begin{lemma}
If $E\in \Rep_{G,X}$ then $E(\cA)=E(\Sc(X))$.
\end{lemma}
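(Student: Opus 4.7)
The plan is to prove the set equality $E(\cA) = E(\Sc(X))$ by showing that the two canonical maps $\alp_E^{\cA}:\cA\hot E\to E$ and $\alp_E^X:\Sc(X)\hot E\to E$ have the same image. I do this by interposing the auxiliary space $\Sc(X)\hot\Sc(G)\hot E$ and relating the two maps through a single commutative square.

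The key device is that $\cA=\Sc(G\times X)$ admits two natural presentations as completed tensor products: the standard $\Sc(G)\hot\Sc(X)$, and the ``twisted'' $\Sc(X)\hot\Sc(G)$, with the two related by the topological isomorphism $\rho:\Sc(X)\hot\Sc(G)\xrightarrow{\sim}\cA$ induced by pullback along the Nash diffeomorphism $\sigma:G\times X\to X\times G$, $\sigma(g,x)=(gx,g)$; explicitly $\rho(\phi\otimes\psi)(g,x)=\psi(g)\phi(gx)$. A direct computation from the definition $\alp\cdot v=\int_G \alp^g\cdot(gv)\,dg$ combined with the imprimitivity identity $(g\phi)(gv)=g(\phi\cdot v)$ gives the two identities
\begin{equation*}
(\psi\otimes\phi)\cdot v \;=\; \psi\cdot(\phi\cdot v), \qquad \rho(\phi\otimes\psi)\cdot v \;=\; \phi\cdot(\psi\cdot v).
\end{equation*}
The second identity works because $\rho(\phi\otimes\psi)^g=\psi(g)\phi$ has no $g$-dependence in the $x$-variable, so $\phi$ factors outside the integral; the twist built into $\rho$ was designed precisely to cancel the $g^{-1}$ in $\alp^g(x)=\alp(g,g^{-1}x)$.

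Assembling these on triple tensors yields the commutative square
\begin{equation*}
\begin{CD}
\Sc(X)\hot\Sc(G)\hot E @>{\mathrm{id}\hot\alp_E^G}>> \Sc(X)\hot E \\
@V{\rho\hot\mathrm{id}}VV @VV{\alp_E^X}V \\
\cA\hot E @>>{\alp_E^{\cA}}> E.
\end{CD}
\end{equation*}
Since $\rho\hot\mathrm{id}$ is an isomorphism, chasing down-then-right shows $E(\cA)=\alp_E^{\cA}(\cA\hot E)=\alp_E^X\bigl((\mathrm{id}\hot\alp_E^G)(\Sc(X)\hot\Sc(G)\hot E)\bigr)\subseteq E(\Sc(X))$. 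Conversely, since $E$ is differentiable as a $G$-module the map $\alp_E^G$ is surjective, and by nuclearity of $\Sc(X)$ (which makes $\hot$ exact) $\mathrm{id}\hot\alp_E^G$ is also surjective; chasing right-then-down in the other direction then yields $E(\Sc(X))\subseteq E(\cA)$.

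The main obstacle is ensuring that $\mathrm{id}\hot\alp_E^G$ is genuinely surjective onto $\Sc(X)\hot E$, not merely of dense image. In the nuclear Fr\'echet setting of the paper this is guaranteed by exactness of the completed tensor product; alternatively it can be obtained by combining the factorization property of $\Sc(G)$ with du Cloux's open-mapping arguments recalled in \S\ref{sec:alg}.
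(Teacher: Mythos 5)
Your proof is correct, and it takes a genuinely different (more explicit) route than the paper's. The paper's argument is short and leans on the equivalence of categories between $\Sc\mod_{G,X}$ and differentiable $\cA$-modules established in the lemma immediately before: for $E(\cA)\subset E(\Sc(X))$ it observes that $\cA$ is a differentiable $\Sc(X)$-module (via the change of variables $(g,x)\mapsto(g,g^{-1}x)$), so the quotient $E(\cA)$ of $\cA\hot E$ is also $\Sc(X)$-differentiable; and for the reverse inclusion it notes that $E(\Sc(X))$ is a $G$-submodule (the multiplication map being $G$-equivariant for the diagonal action), hence lies in $\Sc\mod_{G,X}$, hence is a differentiable $\cA$-module and so $E(\Sc(X))=\cA\, E(\Sc(X))\subset\cA\, E = E(\cA)$. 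You instead build the comparison by hand through the twisted isomorphism $\rho:\Sc(X)\hot\Sc(G)\xrightarrow{\sim}\cA$ coming from the diffeomorphism $(g,x)\mapsto(gx,g)$ and the resulting factorization $\rho(\phi\otimes\psi)\cdot v=\phi\cdot(\psi\cdot v)$ of the $\cA$-action, then chase a commutative square. What your approach buys is that it makes the decomposition of the $\cA$-action into $\Sc(X)$- and $\Sc(G)$-pieces completely transparent and avoids invoking the category equivalence; what it costs is the surjectivity of $\mathrm{id}\hot\alp_E^G$, which you correctly identify as requiring the exactness of $\hot$ with the nuclear Fr\'echet space $\Sc(X)$ (or, as you note, du Cloux's open-mapping machinery), a non-trivial fact that the paper's route sidesteps by reusing its abstract lemma. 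Both proofs are sound; yours is closer to an explicit verification of what the abstract lemma encodes.
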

\begin{proof}
Since $\cA$ is a differentiable $\Sc(X)$-module, it follows that $\cA\hot E$ is differentiable for $\Sc(X)$, and hence the same is true for $E(\cA)$. Thus $E(\cA)\subset E(\Sc(X))$.

To prove the opposite inclusion, notice that $\Sc(X)\hot E\to E$ is a $G$-morphism, if we act on $\Sc(X)\otimes E$ diagonally. It follows that $E(\Sc(X))$ is a $G$-module, and hence $E(\Sc(X))\in \Sc\mod_{G,X}$. Then by the definition of $E(\cA)$ it follows that $ E(\Sc(X))\subset E(\cA)$.
\end{proof}

\subsection{Imprimitivity theorem}\label{subsec:ImpThm}

Let $H\subset G$ be a Nash \Raul{closed} subgroup, and  let $X=H\backslash G$. We will also
let $x_0\in X$ denote the coset $H$, and let $m_{x_0}:=\{\alp\in \Sc(X)\, \vert \alp(x_0)=0\}$. For any $E\in \Sc\mod(G,X)$ define $E_0:=E/m_{x_0}E$. The subspace $m_{x_0}E$ is closed in $E$ by \cite[Lemma 2.5.7]{dCl}.
\footnote{du-Cloux defines the ideal $m_{x_0}$ to be in $\Sc(X)\oplus \C$, since he talks about $\Rep_{G,X}$. For $E\in \Sc\mod(G,X)$, we can use $m_{x_0}\subset \Sc(X)$, since $E=\Sc(X)E$.}

\begin{defn}[{ \cite[\S 2]{dCl}}]\label{def:ind}
 Let  $\pi \in  \Sc\mod(H)$, and let $\Sc(G,\pi)$ denote the space of Schwartz functions from $G$ to the underlying space of $\pi.$ We define a map from  $\Sc(G,\pi)$  to the space $C^{\infty}(G,\pi)$ of all smooth $\pi$-valued functions on $G$ by $f\mapsto \overline f$, where $$\overline f (x)=\int_{h\in H}\pi(h)f(xh)dh,$$
and $dh$ denotes a fixed left-invariant measure on $H$. The Schwartz induction $\ind_H^G(\pi)$ is defined to be the image of this map. Note that $\ind_H^G(\pi)\in \Sc\mod(G)$.
\end{defn}

Given an $H$-module $F$, we denote by $F^{-1/2}$ the tensor product of $F$\ with the square root of the inverse modular function on $H$. Similarly, we define $\delta^{1/2}\ind_H^GF$ to be the Schwartz induction from $H$ to $G$, tensored with the square root of the modular function of $G$.

\begin{theorem}[{\cite[Theorem 2.5.8]{dCl}}]\label{thm:imprimitivity0}
The functor $E\mapsto E_0^{-1/2}$ defines an equivalence of categories between $\Sc\mod(G,X)$ and $\Sc\mod(H)$ with inverse $F\mapsto \delta^{1/2}\ind_H^GF$.
In particular, the natural $G$-morphism $E\to \delta^{1/2}\ind_H^GE_0$ is $\Sc(X)$-equivariant.
\end{theorem}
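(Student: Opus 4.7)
The plan is to establish the statement as an adjoint equivalence: construct the unit and counit morphisms, show they are natural, and then verify they are isomorphisms by reducing to ``free'' generators of each category.

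First, I would verify that the fiber functor $E \mapsto E_0$ is well-defined with values in $\Sc\mod(H)$. Since $H$ stabilizes $x_0$, the ideal $m_{x_0} \subset \Sc(X)$ is $H$-stable, so $m_{x_0}E$ is an $H$-submodule of $E$ and the quotient $E_0$ inherits an $H$-action. That $m_{x_0}E$ is closed is given in the excerpt (\cite[Lemma 2.5.7]{dCl}), so $E_0$ is \Fre, and smoothness plus moderate growth pass to the quotient. The twist by $\delta^{-1/2}$ (the inverse modular function on $H$) is inserted to ensure the eventual adjunction respects Haar measures; it has no effect on verifying the rest of the argument and can be tracked symbolically.

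Second, I would construct the unit $\eta_E : E \to \delta^{1/2}\ind_H^G E_0^{-1/2}$ by sending $v$ to the function $\overline{f_v}$ where $f_v(g)$ is (up to the modular normalization) the image of $g^{-1}v$ in $E_0$. For this to land in the Schwartz induction in the sense of Definition \ref{def:ind}, one must check that $g \mapsto f_v(g)$ is Schwartz modulo $H$; this uses that $E$ is smooth of moderate growth as a $G$-representation, combined with the imprimitivity relation $(g\cdot \alp)v = g(\alp(g^{-1}v))$, which translates decay of $\alp \in \Sc(X)$ into decay of $f_v$ along the $G$-orbit $X = H\backslash G$. Compatibility with the $\Sc(X)$-action is automatic from the definition of $m_{x_0}$ and the action by pointwise multiplication on $\ind_H^G$. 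Dually, I would construct the counit $\epsilon_F: (\delta^{1/2}\ind_H^G F)_0^{-1/2} \to F$ via $\overline f \mapsto f(e)$ (after the modular twist), which is well-defined because $\Sc(X)$-functions vanishing at $x_0$ push Schwartz-induced functions into those vanishing at $e$.

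Third, and this is the main obstacle, I would verify that $\eta_E$ and $\epsilon_F$ are isomorphisms. My strategy is to reduce to ``free'' generators of each category. On the imprimitivity side, every $E \in \Sc\mod(G,X)$ is a quotient of the free object $\cA = \Sc(G \times X)$, for which Theorem \ref{thm:U} identifies $m_{x_0}\cA$ with the functions vanishing to infinite order on $G \times \{x_0\}$, so $\cA_0 \cong \Sc(G)$ explicitly as an $H$-module under left translation; the image under $\delta^{1/2}\ind_H^G$ is easily seen to reproduce $\cA$. Dually, every $F \in \Sc\mod(H)$ is a quotient of $\Sc(H) \hot F$, for which both composites can be computed directly. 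To propagate from free modules to all modules, I would use the exactness of the two functors, which follows from the hereditary and factorizable properties of $\Sc(X)$, $\Sc(G)$, and $\cA$ established in Section \ref{sec:alg} and the preceding lemma: $m_{x_0}E$ is closed and the fiber $E \mapsto E_0$ preserves surjections, while Schwartz induction preserves surjections by du Cloux's factorization theorem. Injectivity of $\eta_E$ rests on $\Sc(X)\cdot E = E$ together with the fact that $\{g \cdot m_{x_0} \mid g \in G\}$ generates $\Sc(X)$ as an ideal-system covering $X$, so no nonzero $v$ can lie in $\bigcap_g g(m_{x_0}E)$. Surjectivity of $\eta_E$ follows from approximate identity and factorization arguments as in Lemmas \ref{lem:App1} and \ref{lemma:B_A-algebra_with_approximate_identity}, applied to $\cA$-modules. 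The final assertion, that the comparison map is $\Sc(X)$-equivariant, is built into the construction of $\eta_E$.
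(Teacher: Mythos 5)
The paper does not prove this theorem; it is cited verbatim from du~Cloux \cite[Theorem~2.5.8]{dCl}, so there is no internal argument to compare against and your proposal is being judged on its own merits.

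Your outline follows the standard unit/counit strategy and the constructions of $\eta_E$ and $\epsilon_F$ are the right ones, but the step where you actually show they are isomorphisms contains a genuine gap. The injectivity argument, ``no nonzero $v$ can lie in $\bigcap_g g(m_{x_0}E)$ because $\{g\cdot m_{x_0}\}$ generates $\Sc(X)$ as an ideal-system covering $X$,'' is not a proof. From the imprimitivity relation one has $g(m_{x_0}E) = m_{gx_0}E$, so the kernel of $\eta_E$ is $\bigcap_{x\in X} m_xE$, and showing this vanishes is precisely the hard part of the theorem. The fact that the $m_x$ jointly cover $X$ (or even that $\Sc(X)E=E$) does not by itself force $\bigcap_x m_xE = 0$: for a quotient $E = (\cA\hot V)/K$ one only gets $\bigcap_x m_xE = \bigl(\bigcap_x (m_x(\cA\hot V) + K)\bigr)/K$, which is in general larger than $\bigl(\bigcap_x m_x(\cA\hot V)\bigr)/K + K/K = 0$. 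Establishing that the intersection actually collapses is where the nuclear \Fre structure, the identification $\cA \cong \Sc(G\times X)$, and Theorem~\ref{thm:U} (jets along $G\times\{x\}$) must enter concretely; one cannot pass directly from the free case to the general case by the exactness argument you sketch, because the five-lemma propagation requires knowing the relevant functors preserve kernels and not just cokernels, and this in turn reduces to the same intersection computation.

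A second, smaller concern: you correctly flag that one must show $g\mapsto f_v(g)$ is Schwartz modulo $H$, but ``this uses that $E$ is smooth of moderate growth\ldots combined with the imprimitivity relation'' is a statement of the conclusion rather than an argument. The standard way to do this is to use differentiability of $E$ over $\cA$ to write $v=\sum_i\alp_i v_i$ with $\alp_i\in\cA=\Sc(G\times X)$ and then transport the Schwartz estimates of the $\alp_i$; spelling this out would also supply most of what is needed to fix the injectivity and surjectivity steps, since both can be reduced (via the factorization property and hereditary differentiability) to explicit computations on $\cA\hot V$ where the jet description of $m_x(\cA\hot V)$ is available.
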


\begin{theorem}\label{thm:imprimitivity}
Let $Z$ be a Nash manifold, and $M:=X\times Z$, with $X$ given as above. Let $G$ act on $M$ by acting on the first coordinate.

\begin{enumerate}[(i)]

\item \label{imp:Smod} If $E\in \Sc\mod(G,M)$ then $E\in \Sc\mod(G,X)$ and $E\in \Sc\mod(Z)$.
\item \label{imp:variant} The natural map $E\to E_0^{-1/2}$ is $\Sc(Z)$-equivariant.

\item \label{imp:ence} The functor $E\to E_0^{-1/2}$ defines an equivalence of categories between $\Sc\mod(G,M)$ and $\Sc\mod(H,Z)$, with inverse $F\mapsto \delta^{1/2}\ind_H^GF$.
\end{enumerate}
\end{theorem}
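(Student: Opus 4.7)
The strategy is to reduce everything to the standard imprimitivity theorem \ref{thm:imprimitivity0}, exploiting the algebra decomposition $\Sc(M) \cong \Sc(X) \hot \Sc(Z)$ (commutative, with pointwise multiplication) together with the hypothesis that $G$ acts trivially on the $Z$-coordinate. For part \ref{imp:Smod}, restricting an $\Sc(M)$-action on $E$ along the two subalgebra inclusions produces commuting actions of $\Sc(X)$ and $\Sc(Z)$. The imprimitivity compatibility with $G$ for the $\Sc(X)$-factor is inherited directly from the compatibility for $\Sc(M)$, since $G$ acts only on the $X$-coordinate, and this places $E$ in $\Rep_{G,X}$. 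Differentiability of each restricted action follows from the factorization $E = \Sc(M) \cdot E = \Sc(X) \cdot \Sc(Z) \cdot E$, which forces both $\Sc(X) \cdot E = E$ and $\Sc(Z) \cdot E = E$.

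For part \ref{imp:variant}, since $\Sc(M)$ is commutative the action of $\Sc(Z)$ commutes with that of $\Sc(X)$, so multiplication by $\Sc(Z)$ preserves the closed submodule $m_{x_0} E$. Hence the $\Sc(Z)$-action descends to $E_0 = E/m_{x_0} E$ and the canonical surjection $E \to E_0$ is tautologically $\Sc(Z)$-equivariant. For part \ref{imp:ence}, combine this observation with Theorem \ref{thm:imprimitivity0}: the functor $E \mapsto E_0^{-1/2}$ lands in $\Sc\mod(H,Z)$ because the residual $\Sc(Z)$-action on $E_0$ is differentiable (as a quotient of a differentiable $\Sc(Z)$-module) and commutes with the $H$-action (both being derived from commuting actions on $E$, since $\Sc(Z)$ commutes with the entire $G$-action). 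Conversely, for $F \in \Sc\mod(H,Z)$, the space $\delta^{1/2}\ind_H^G F$ carries a pointwise $\Sc(Z)$-action transported from $F$; this commutes with the $G$-translation action and with the ambient $\Sc(X)$-imprimitivity, so through the algebra isomorphism $\Sc(M) \cong \Sc(X) \hot \Sc(Z)$ it assembles into an object of $\Sc\mod(G,M)$. The two functors are quasi-inverse because the natural isomorphisms supplied by Theorem \ref{thm:imprimitivity0} for the underlying $\Sc\mod(G,X)$- and $\Sc\mod(H)$-structures transport the $\Sc(Z)$-action unchanged.

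The main technical step that requires care is verifying that Schwartz induction preserves differentiability of the auxiliary $\Sc(Z)$-action. Writing $\Sc(G, F) \cong \Sc(G) \hot F$ and using $\Sc(Z) \cdot F = F$ gives $\Sc(Z) \cdot \Sc(G, F) = \Sc(G, F)$, and this differentiability then passes to the quotient $\ind_H^G F$ because the averaging map $f \mapsto \bar f$ is $\Sc(Z)$-equivariant (since $H$ acts trivially on $Z$, so the pointwise $\Sc(Z)$-action commutes with the integration against the $H$-action inside the averaging). Once this differentiability point is in place, everything else is a formal bookkeeping of commuting structures through Theorem \ref{thm:imprimitivity0}.
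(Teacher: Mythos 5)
Your overall strategy mirrors the paper's (reduce to Theorem~\ref{thm:imprimitivity0} on the $(G,X)$-factor, carry the $\Sc(Z)$-action along via commutativity), and parts~\eqref{imp:variant} and~\eqref{imp:ence} are essentially correct, with a welcome extra remark about $\Sc(Z)$-equivariance of the averaging map defining Schwartz induction. However, part~\eqref{imp:Smod} has a genuine gap: $\Sc(X)$ and $\Sc(Z)$ are \emph{not} subalgebras of $\Sc(M)=\Sc(X\times Z)$. For non-compact $X$ or $Z$, the pullback of a Schwartz function along the projection $X\times Z\to X$ fails to decay in the $Z$-direction, so there is no inclusion $\Sc(X)\hookrightarrow\Sc(M)$ and one cannot simply ``restrict'' the $\Sc(M)$-action. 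For the same reason the chain $E=\Sc(M)\cdot E=\Sc(X)\cdot\Sc(Z)\cdot E$ is not literal: $\Sc(X)\cdot\Sc(Z)$ (finite sums of products of pullbacks) is only dense in $\Sc(M)$, not equal to it.

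The correct route, and the one the paper takes, is to lift to $\Sc(M)\hot E\cong\Sc(X)\hot\Sc(Z)\hot E$, where the $\Sc(X)$- and $\Sc(Z)$-actions are tautologically defined on the appropriate tensor factor, and then descend to $E$: one shows that the kernel of the surjection $\Sc(M)\hot E\onto E$ is stable under these actions, using an approximate identity $\{e_n\}$ in $\Sc(M)$ (in the same spirit as Lemma~\ref{lemma:B_A-algebra_with_approximate_identity}). Equivalently, one is extending the $\Sc(M)$-action to the multiplier algebra and then restricting to $\Sc(X)$ and $\Sc(Z)$ viewed as multipliers of $\Sc(M)$. Once the actions are defined this way, the facts you assert---commutativity, compatibility with the $G$-action, and $\Sc(X)\cdot E=E$, $\Sc(Z)\cdot E=E$ via the factorization property---all hold, and the remainder of your argument for~\eqref{imp:variant} and~\eqref{imp:ence} goes through unchanged.
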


\begin{proof}
\eqref{imp:Smod} We need to show that $E$ is differentiable for $\Sc(X)$ and $\Sc(Z)$. Since $E$ is differentiable for $\Sc(M)$, the natural map $\Sc(M)\hot E\to E$ is surjective. Since $\Sc(M)\cong \Sc(X)\hot \Sc(Z)$, $\Sc(M)\hot E$ is a differentiable module for $\Sc(X)$ and $\Sc(Z)$. Thus, it is enough to show that any closed $\Sc(M)$ submodule $V\subset \Sc(M)\hot E$ is closed under multiplication by $\Sc(X)$ and $\Sc(Z)$. This is shown using an approximate identity $\{e_n\}$ in $\Sc(M)$.

\eqref{imp:variant} is evident, since the action of $\Sc(Z)$ commutes with that of $\Sc(X)$.

\eqref{imp:ence} Follows from (\ref{imp:Smod},\ref{imp:variant}) and Theorem \ref{thm:imprimitivity0}.
\end{proof}

\Dima{
\begin{defn}\label{def:coinv}
For $\pi\in \Rep^{\infty}(G)$, denote by $\pi_G$ the space of coinvariants, i.e. quotient of $\pi$ by the intersection of kernels of all $G$-invariant functionals. Explicitly,
$$\pi_G=\pi/\overline{\{\pi(g)v -v\, \vert \,v\in \pi, \, g\in G\}}.$$
For $\tau \in \Rep^{\infty}(G)$ we denote  $\pi\hot_G\tau:=(\pi\hot\tau)_{\Delta G}$ - the space of coinvariants under the diagonal action.
\end{defn}
Note that if $G$ is connected then $\pi_G=\pi/\overline{\fg\pi}$ which in turn is equal to the quotient of $\oH_0(\fg,\pi)$ by the closure of zero, where $\fg$ denotes the Lie algebra of $G$.
}

We will need the following version of Frobenius reciprocity for Schwartz induction.
\begin{lem}[{\cite[Lemma 2.8]{GGS2}}]\label{lem:Frob} Let $\tau \in \Rep^{\infty}(H)$, $\pi\in \Rep^{\infty}(G)$.
 Then
$$\pi\underset{H}{\hot} \tau\delta_H\delta_G^{-1}\cong\pi \underset{G}{\hot} \ind_H^G(\tau).$$
\end{lem}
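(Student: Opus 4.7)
The plan is to interpret $\ind_H^G(\tau)$ as an imprimitivity system on $X = H\backslash G$ and then apply the equivalence of Theorem \ref{thm:imprimitivity0} to reduce the diagonal $G$-coinvariants of $\pi \hot \ind_H^G(\tau)$ to ordinary $H$-coinvariants on the fiber over the base point $x_0$.

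First I would note that $\ind_H^G(\tau)$ is naturally an object of $\Sc\mod(G,X)$: the algebra $\Sc(X) = \Sc(H\backslash G)$ acts on $\ind_H^G(\tau)$ via pointwise multiplication of the representing functions on $G$ (well defined because $\Sc(X)$-functions pulled back to $G$ are $H$-invariant), and compatibility with the $G$-action is automatic. Next, form $V := \pi \hot \ind_H^G(\tau)$ with $G$ acting diagonally and $\Sc(X)$ acting through the second factor. The imprimitivity identity $(g\cdot\alpha)v = g(\alpha(g^{-1}v))$ on $\ind_H^G(\tau)$ transfers verbatim to $V$, so $V \in \Sc\mod(G,X)$.

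The main step is then the identification $V_G \cong (V_0^{-1/2})_H$, which follows from Theorem \ref{thm:imprimitivity0}: the equivalence $E\mapsto E_0^{-1/2}$ between $\Sc\mod(G,X)$ and $\Sc\mod(H)$ intertwines the natural forgetful functors to $\Sc\mod(G)$ and $\Sc\mod(H)$, and therefore descends to coinvariants. Concretely, the imprimitivity relation shows that the closure of $\{gv - v : g\in G,\,v\in V\}$ in $V$ maps, under the quotient $V \to V_0^{-1/2}$, onto the closure of $\{hw - w : h\in H,\,w\in V_0^{-1/2}\}$ bijectively, modulo the modular twist encoded in the exponent $-1/2$.

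Finally, since $\Sc(X)$ acts only on the second factor of $V$, one has $V_0 = \pi \hot (\ind_H^G\tau)_0$. Comparing the inverse functor $F \mapsto \delta^{1/2}\ind_H^G F$ of Theorem \ref{thm:imprimitivity0} with the unnormalized $\ind_H^G$ of Definition \ref{def:ind} yields $(\ind_H^G\tau)_0^{-1/2} \cong \tau\delta_H\delta_G^{-1}$. Substituting gives
$$\pi \hot_G \ind_H^G(\tau) = V_G \cong (\pi \hot \tau\delta_H\delta_G^{-1})_H = \pi \hot_H \tau\delta_H\delta_G^{-1},$$
as required. The main obstacle is the modular bookkeeping: both the use of left-invariant Haar measure on $H$ in Definition \ref{def:ind} and the asymmetric normalization $E\mapsto E_0^{-1/2}$ versus $F\mapsto \delta^{1/2}\ind_H^G F$ in Theorem \ref{thm:imprimitivity0} introduce twists by $\delta_H$ and $\delta_G$ that must be tracked with care to arrive at precisely the character $\delta_H\delta_G^{-1}$ on the right-hand side.
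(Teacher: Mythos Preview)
The paper does not prove this lemma; it merely cites \cite[Lemma 2.8]{GGS2}. So there is no in-paper argument to compare against, and I can only assess your proposal on its own merits.

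Your overall strategy---realize $V=\pi\hot\ind_H^G(\tau)$ as an object of $\Sc\mod(G,X)$ and then pass to the fiber at $x_0$---is natural, and the computation of $V_0$ in the last step is fine. The gap is in your ``main step'': the assertion that $V_G\cong (V_0^{-1/2})_H$ follows from Theorem~\ref{thm:imprimitivity0}. An equivalence of categories $\Sc\mod(G,X)\simeq\Sc\mod(H)$ does \emph{not} by itself match $G$-coinvariants on the left with $H$-coinvariants on the right; for that you would need the equivalence to intertwine the inclusions of $\mathrm{Vect}$ as trivial modules on both sides, and it does not (the image of the trivial $H$-module $\C$ under $\delta^{1/2}\ind_H^G$ is $\Sc(X)$, not the trivial $G$-module). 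Your ``concrete'' sentence---that the closure of $\{gv-v\}$ maps \emph{bijectively} under $V\to V_0^{-1/2}$ onto the closure of $\{hw-w\}$---is exactly the nontrivial content of the lemma, and you have not argued it: the projection $V\to V_0$ has kernel $m_{x_0}V$, so injectivity on the relevant subspace is far from automatic.

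In short, the claim $(\ind_H^G F)_G\cong F_H$ (up to modular twist), which is the $\pi=\C$ case of the lemma, is being assumed rather than proved. To close the gap you would need an explicit argument, e.g.\ produce a map $\ind_H^G(\tau)\to (\tau\delta_H\delta_G^{-1})_H$ by integration (or by evaluation combined with the surjection $\Sc(G)\hot\tau\onto\ind_H^G\tau$), show it is $G$-invariant, and check it identifies the two coinvariant spaces. This is where the modular bookkeeping you mention actually enters, and it is the substance of the proof in \cite{GGS2}.
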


\section{The projective line over a 1-parameter group}\label{sec:proj1}

Let $Z$ be a 1-parameter group with Lie algebra $\z=\Lie(Z)$, and let $\hat{Z}$ be its associated group of (unitary) characters with Lie algebra $\hat{\z}=\Lie(\hat{Z}).$ We notice that we can naturally identify
\begin{align*}
  \hat{\z} & =\Hom_{\R}(\z,i\R)
            = \{\chi:\z \rightarrow i\R\,|\,\mbox{$\chi(\lambda z)=\lambda\chi(z)$ for $\lambda \in \R$}\} \\
           & = \{\chi:\z^{\times}\rightarrow i\R\,|\, \mbox{$\chi(\lambda z)=\lambda\chi(z)$ for $\lambda \in \R^{\times}$}\}
\end{align*}
where $\z^{\times}=\z-{0}$ and $\R^{\times}=\R-{0}$. We now define
\[
 \hat{\z}_{\sigma}=\{\psi:\z^{\times}\rightarrow i\R \, | \, \psi(\lambda z)=\lambda^{-1}\psi(z)\}
\]
and notice that $\hat{\z}_{\sigma}$ has a natural structure as a real vector space. We let $\hat{Z}_{\sigma}$ be its associated $1$-parameter Lie group, and define
\[
 Z_{\sigma}=[\hat{Z}_{\sigma}]^{\hat{}}
\]
Then, by Pontryagin duality, we have a natural isomorphism $[Z_{\sigma}]^{\hat{}}\simeq \hat{Z}_{\sigma}$. We will set
\[
 \hat{Z}^{\times} = \{\chi \in \hat{Z} \, | \, \mbox{$ d\chi(z)\neq 0$ for $z\in \z^{\times}$}\}
\]
and
\[
 \hat{Z}_{\sigma}^{\times} = \{\psi \in \hat{Z}_{\sigma} \, | \, \mbox{$ d\psi(z)\neq 0$ for $z\in \z^{\times}$}\}.
\]
We remark that, by the definition of $\hat{\z}_{\sigma}$,
\[
 d\psi(\lambda z)=\lambda^{-1} d\psi(z) \qquad \mbox{for all $\psi\in \hat{Z}_{\sigma},$ $z \in \z^{\times},$ $\lambda \in \R^{\times}.$}
\]
Hence we can define, for any $\chi \in \hat{Z}^{\times}$, a character $\sigma\chi \in \hat {Z}^{\times}_{\sigma}$ such that
\[
 d(\sigma\chi)(z)=1/d\chi(z) \qquad \mbox{ for all $z\in \z^{\times}$,}
\]
and it's straightforward to check that this map defines an isomorphism of Nash manifolds $\sigma: \hat{Z}^{\times} \rightarrow \hat{Z}^{\times}_{\sigma}.$ Furthermore, we can use this map to define an isomorphism of Fr\'echet algebras $\sigma: \Sc(\hat{Z}^{\times})\rightarrow \Sc(\hat{Z}_{\sigma}^{\times})$ given by
\[
 (\sigma \phi)(\psi):=\phi(\sigma^{-1}(\psi)) \qquad \mbox{for $\phi\in \Sc(\hat{Z}^{\times})$, $\psi \in \hat{Z}^{\times}_{\sigma}$.}
\]
Using these isomorphisms, we can define the \emph{$\hat{Z}$-projective line} to be the Nash manifold
\[
 P^{1}(\hat{Z})=\hat{Z}+_{\hat{Z}^{\times}}\hat{Z}_{\sigma}
\]
with associated Fr\'echet algebra
\[
 \Sc(P^{1}(\hat{Z}))=\Sc(\hat{Z})+_{\Sc(\hat{Z}^{\times})}\Sc(\hat{Z}_{\sigma}).
\]
Now notice that on the space $\Sc(\hat{Z}^{\times}) \subset \Sc(\hat{Z})$ , $\Sc(\hat{Z}_{\sigma})$ we have a natural action of the groups $Z$ and $Z_{\sigma}$. In the rest of this section we will describe the relation between the infinitesimal actions of these two groups. We start by noticing that for $z \in \z,$ $\phi\in \Sc(\hat{Z}^{\times}),$
\[
 (z\cdot \phi)(\chi)=d\chi(z)\phi(\chi),
\]
and hence $z$ defines an invertible operator on $\Sc(\hat{Z}^{\times})$. We now want to define an isomorphism of Nash manifolds $\sigma:\z^{\times}\rightarrow \z^{\times}_{\sigma}$ in the following way: First we identify
\begin{align*}
 \z_{\sigma} & = \{\alpha:\hat{\z}_{\sigma}\rightarrow i\R \, | \, \mbox{$\alpha(\lambda\psi)=\lambda\alpha(\psi)$ for $\psi \in \hat{\z}_{\sigma}$, $\lambda \in \R.$}\} \\
  & \simeq \{\alpha:\hat{\z}_{\sigma}^{\times} \rightarrow i\R \, | \, \mbox{$\alpha(\lambda\psi)=\lambda\alpha(\psi)$ for $\psi \in \hat{\z}_{\sigma}^{\times}$, $\lambda \in \R^{\times}.$}\}
\end{align*}
and then we define, for $\Raul{z}\in \z^{\times},$ \Raul{$\psi\in \hat{\z}_{\sigma}^{\times}$}
\[
 [\sigma(z)](\psi)=\psi(z).
\]
It's then clear that, for all $\lambda \neq 0,$
\[
 [\sigma(z)](\lambda \psi)=\lambda\psi(z)=\lambda[\sigma(z)](\psi)
\]
and hence $\sigma(z)$ can be identified with an element of $\z_{\sigma}^{\times}.$ Furthermore, for $\lambda \neq 0$
\begin{align*}
 [\sigma(\lambda z)](\psi) & = \psi(\lambda z)
                            = \lambda^{-1}\psi(z)
                            = \lambda^{-1}[\sigma(z)](\psi),
\end{align*}
that is, $\sigma:\z^{\times}\rightarrow \z^{\times}_{\sigma}$ is indeed an isomorphism satisfying $\sigma(\lambda z)=\lambda^{-1} \sigma(z).$  With these observations in place, we now notice that if $\phi\in \Sc(\hat{Z}^{\times})$ and $z\in \z^{\times},$ then for all $\psi \in \hat{Z}^{\times}_{\sigma}$
\begin{multline*}
 [\sigma(z)\sigma(\phi)](\psi)  =d\psi(z) \sigma(\phi)(\psi)  =\frac{1}{\frac{1}{d\psi(z)}} \phi(\sigma^{-1}(\psi))  = \frac{1}{d\sigma^{-1}(\psi)(z)} \phi(\sigma^{-1}(\psi))
         =\\ (z^{-1} \cdot \phi)(\sigma^{-1}(\psi))
         = [\Raul{\sigma(z^{-1} \cdot \phi)](\psi)},
\end{multline*}
that is,
\begin{equation}\label{eq:sigma_z_action}
 \sigma(z)\cdot \sigma(\phi) = \sigma(z^{-1} \cdot \phi),
\end{equation}
where $z^{-1}$ represents the inverse of the operator $z$ acting on $\Sc(\hat{Z}^{\times})$.

\begin{definition}
 Let $X$ be a Nash manifold and let $\Sc(X)$ be its associated Schwartz space. Given an $\Sc(X)$-module $E$ and an open semialgebraic set $U\subset X,$ we define $E_{U}\subset E$ to be the image of the multiplication map $\Sc(U)\htimes E\rightarrow E$ with the topology taken as a quotient of $\Sc(U)\htimes E$.
\end{definition}

\begin{lemma}\label{lem:P1}
 If $E$ is a differentiable $\Sc(P^{1}(\hat{Z}))$-module, then
 \begin{equation}\label{eq:P1_module_decomposition}
  E\simeq E_{\hat{Z}} +_{E_{\hat{Z}^{\times}}} E_{\hat{Z}_{\sigma}}
 \end{equation}

\end{lemma}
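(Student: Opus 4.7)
The plan is to combine a partition-of-unity argument on the compact Nash manifold $P^{1}(\hat{Z})$ with the factorization property of $\Sc(P^{1}(\hat{Z}))$. Geometrically $P^{1}(\hat{Z})$ is the gluing of $\hat{Z}$ and $\hat{Z}_{\sigma}$ along $\hat{Z}^{\times}$, so a suitable bump function will split every $v\in E$ into components in $E_{\hat{Z}}$ and $E_{\hat{Z}_{\sigma}}$, with overlap exactly $E_{\hat{Z}^{\times}}$.

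First I would fix $\eta\in\Sc(P^{1}(\hat{Z}))$ with $\eta\equiv 1$ in a neighborhood of $0\in\hat{Z}$ and vanishing to infinite order at $0\in\hat{Z}_{\sigma}$. By Theorem~\ref{thm:U} this places $\eta\in\Sc(\hat{Z})$, and symmetrically $1-\eta\in\Sc(\hat{Z}_{\sigma})$. Any product $(1-\eta)\phi$ with $\phi\in\Sc(\hat{Z})$ then vanishes to infinite order at both $0\in\hat{Z}$ and $0\in\hat{Z}_{\sigma}$, hence lies in $\Sc(\hat{Z}^{\times})$; symmetrically $\eta\psi\in\Sc(\hat{Z}^{\times})$ for any $\psi\in\Sc(\hat{Z}_{\sigma})$.

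Next I would establish a linear bijection from $(E_{\hat{Z}}\oplus E_{\hat{Z}_{\sigma}})/\{(v,-v):v\in E_{\hat{Z}^{\times}}\}$ onto $E$. For surjectivity, the factorization property writes any $v\in E$ as $\phi\cdot w$ with $\phi\in\Sc(P^{1}(\hat{Z}))$ and $w\in E$, and the decomposition $\phi=\eta\phi+(1-\eta)\phi$ exhibits $v\in E_{\hat{Z}}+E_{\hat{Z}_{\sigma}}$. For the kernel, suppose $v_{1}+v_{2}=0$ with $v_{1}=\phi_{1}w_{1}\in E_{\hat{Z}}$ and $v_{2}=\phi_{2}w_{2}\in E_{\hat{Z}_{\sigma}}$, so $v_{1}=-\phi_{2}w_{2}$ as well. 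Then $v_{1}=\eta v_{1}+(1-\eta)v_{1}=-(\eta\phi_{2})w_{2}+((1-\eta)\phi_{1})w_{1}$, using the two different factorizations of $v_{1}$; by the previous step both $\eta\phi_{2}$ and $(1-\eta)\phi_{1}$ lie in $\Sc(\hat{Z}^{\times})$, so $v_{1}\in E_{\hat{Z}^{\times}}$, giving the kernel description.

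Finally I would promote this bijection to a topological isomorphism of $\Sc(P^{1}(\hat{Z}))$-modules. The inclusions $\Sc(\hat{Z}^{\times})\hookrightarrow\Sc(\hat{Z})$ and $\Sc(\hat{Z}^{\times})\hookrightarrow\Sc(\hat{Z}_{\sigma})$ descend to continuous maps $E_{\hat{Z}^{\times}}\to E_{\hat{Z}}$ and $E_{\hat{Z}^{\times}}\to E_{\hat{Z}_{\sigma}}$, whose antidiagonal image is closed, so the fibered sum is Fr\'echet. The constructed bijection is continuous and $\Sc(P^{1}(\hat{Z}))$-equivariant, hence a homeomorphism by the open mapping theorem. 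The main obstacle I anticipate is precisely this topological verification: the $E_{U}$ carry their own quotient topology inherited from $\Sc(U)\hot E$, and one must show they assemble into a Fr\'echet fibered sum whose topology agrees with that of $E$. The explicit continuous splittings produced by $\eta$ and $1-\eta$ reduce this to the open mapping theorem, but the compatibility of topologies has to be spelled out with care.
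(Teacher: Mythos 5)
Your proof takes essentially the same route as the paper's: your $\eta$ and $1-\eta$ play the role of the paper's partition of unity $\{\alp_1,\alp_2\}$ subordinate to $\{\hat{Z},\hat{Z}_{\sigma}\}$, and the surjectivity and kernel computations coincide. One small simplification worth noting: the paper sidesteps the open mapping theorem by observing that $\Phi(v)=(\alp_1 v,\alp_2 v)$ is already a continuous map $E\to E_{\hat{Z}}\oplus E_{\hat{Z}_{\sigma}}$ furnishing an explicit continuous inverse to $\Psi$ on the fibered sum, so the topological concerns you flag at the end are dispatched more directly.
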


\begin{proof}
 Notice that the natural inclusions $E_{\hat{Z}} \hookrightarrow E$ and $E_{\hat{Z}_{\sigma}}  \hookrightarrow E$ define a map
 \[
  \Psi: E_{\hat{Z}} \oplus E_{\hat{Z}_{\sigma}} \rightarrow E.
 \]
 Let us first show that this map is surjective. Let $\{\alp_{1},\alp_{2}\}$ be a partition of unity subordinated to the cover $\{ \hat{Z}, \hat{Z}_{\sigma} \}$ of $P^{1}(\hat{Z})$, that is,
 \begin{enumerate}
  \item $\alp_{1},\alp_{2}\in \Sc(P^{1}(\hat{Z}))$
  \item $\supp \alp_{1} \in \hat{Z}$ and $\supp \alp_{2} \in \hat{Z}_{\sigma}$
  \item $\alp_{1}+ \alp_{2}=1.$
 \end{enumerate}
Then we can define a map
\[
 \Phi:E \rightarrow E_{\hat{Z}} \oplus E_{\hat{Z}_{\sigma}}
\]
given by $\Phi(v)=(\alp_{1}v, \alp_{2}v)$. Then it's clear that for all $v\in V$,
\begin{align*}
 \Psi(\Phi(v)) & = \Psi(\alp_{1}v,\alp_{2}v)  = \alp_{1}v + \alp_{2} v  = 1\cdot v=v
\end{align*}
where the last equality follows from the fact that $E$ is differentiable. From these equations we conclude that $\Psi$ is indeed surjective. On the other hand, the Kernel of $\Psi$ is naturally isomorphic to $E_{\hat{Z}} \cap E_{\hat{Z}_{\sigma}}$ so, in order to prove (\ref{eq:P1_module_decomposition}), we need to show first that $E_{\hat{Z}} \cap E_{\hat{Z}_{\sigma}}=E_{\hat{Z}^{\times}}$. Notice first that, since $\Sc(\hat{Z}^{\times}) \subset \Sc(\hat{Z}),  \Sc(\hat{Z}_{\sigma})$ we have that
\[
 E_{\hat{Z}^{\times}}\subset E_{\hat{Z}}, E_{\hat{Z}_{\sigma}},
\]
and hence
\begin{equation}\label{eq:contained_in_intersection}
 E_{\hat{Z}^{\times}}\subset E_{\hat{Z}}\cap E_{\hat{Z}_{\sigma}}.
\end{equation}
Now let $v\in E_{\hat{Z}}\cap E_{\hat{Z}_{\sigma}}$. Since $ \Sc(\hat{Z})$ satisfies the factorization property, it follows that there exist $\phi_{1},\ldots,\phi_{m}\in  \Sc(\hat{Z})$ and $v_{1},\ldots v_{m}\in E$ such that
\[
 v=\phi_{1}v_{1}+\cdots+\phi_{m}v_{m}.
\]
Similarly, there exists $g_{1},\ldots,g_{l}\in  \Sc(\hat{Z}_\sigma)$ and $w_{1},\ldots w_{l}\in E$ such that
\[
 v=g_{1}w_{1}+\cdots+g_{l}w_{l}.
\]
Hence,
\[
 v=\alp_{1}v+\alp_{2}v=\alp_{1}g_{1}w_{1}+\cdots+\alp_{1}g_{l}w_{l} + \alp_{2}\phi_{1}v_{1}+\cdots+\alp_{2}\phi_{m}v_{m}.
\]
But $\alp_{1}g_{k}\in  \Sc(\hat{Z})\cdot \Sc(\hat{Z}_{\sigma})=\Sc(\hat{Z}^{\times})$ for $k=1,\ldots,l$ and, similarly, $\alp_{2}\phi_{j}\in   \Sc(\hat{Z}_{\sigma}) \cdot \Sc(\hat{Z})=\Sc(\hat{Z}^{\times})$ for $j=1,\ldots,m.$ It follows that $v\in  E_{\hat{Z}^{\times}}$ and, since $v\in E_{\hat{Z}}\cap E_{\hat{Z}_{\sigma}}$ was arbitrary, it follows that
\begin{equation}\label{eq:contained_in_Z_times}
 E_{\hat{Z}}\cap E_{\hat{Z}_{\sigma}} \subset E_{\hat{Z}^{\times}}.
\end{equation}
From equations (\ref{eq:contained_in_intersection}) and (\ref{eq:contained_in_Z_times}) it follows that $E_{\hat{Z}}\cap E_{\hat{Z}_{\sigma}}=E_{\hat{Z}^{\times}}$ as we wanted to show. From this we notice that the maps $\Psi$ and $\Phi$ induce maps
\[
 \tilde{\Psi}:E_{\hat{Z}} +_{E_{\hat{Z}^{\times}}} E_{\hat{Z}_{\sigma}}\longrightarrow E
\]
and
\[
 \tilde{\Phi}:E \longrightarrow  E_{\hat{Z}} +_{E_{\hat{Z}^{\times}}} E_{\hat{Z}_{\sigma}}
\]
that are each other inverses, that is, $E\simeq E_{\hat{Z}} +_{E_{\hat{Z}^{\times}}} E_{\hat{Z}_{\sigma}}$ as we wanted to show.
\end{proof}

\section{Non-singular representations of $\R$}\label{sec:nonsing}

\begin{definition}
 Given a nuclear, Fr\'echet space $V,$ we say that an operator $A\in \End(V)$ is \emph{integrable} if there exists a smooth representation $(\rho,V)$ of a 1-parameter group $Z\cong \R$ and an element $z\in \z=\Lie(Z)$ such that
 \[
  d\rho(z)=A.
 \]
\end{definition}
Smooth here means that $(\rho,V)\in \Sc\mod(Z)$.

\begin{definition}
 Given a 1-parameter group $Z$ with Lie algebra $\z=\Lie Z,$  we say that a smooth representation $(\rho, V)\in \Sc\mod(Z)$ is \emph{non-singular} if $d\rho(z)\in \End(V)$ is invertible with integrable inverse.
\end{definition}

\begin{lemma}\label{lem:nonsin}
 Given a smooth representation $(\rho,V)$ of a 1-parameter group $Z$, the following conditions are equivalent:
 \begin{enumerate}
  \item $(\rho, V)$ is non-singular.
  \item $(\rho, V)$ extends to a representation $\overline{\rho}$ of $\Sc(P^{1}(\hat{Z}))$ and $V$ is differentiable as an $\Sc(\hat{Z}_{\sigma})$-module.
  \item $V$ is differentiable as an $\Sc(\hat{Z}^{\times})$-module.
 \end{enumerate}

\end{lemma}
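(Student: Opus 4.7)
The plan is to prove the four implications (3) $\Rightarrow$ (2), (2) $\Rightarrow$ (3), (3) $\Rightarrow$ (1), and (1) $\Rightarrow$ (3); the first three are of a largely formal nature, while the last is the main technical step.

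For (3) $\Rightarrow$ (2), the smooth $Z$-representation $\rho$ already gives $V$ a differentiable $\Sc(\hat{Z})$-module structure, and the hypothesis gives a compatible $\Sc(\hat{Z}^{\times})$-module structure via the inclusion $\Sc(\hat{Z}^{\times}) \hookrightarrow \Sc(\hat{Z})$ coming from Theorem \ref{thm:U}. I construct the missing $\Sc(\hat{Z}_{\sigma})$-action on $V$ as follows: writing any $v \in V$ as $v = \alpha \cdot w$ with $\alpha \in \Sc(\hat{Z}^{\times})$ and $w \in V$, I set $\phi \cdot v := \sigma^{-1}(\phi \cdot \sigma(\alpha)) \cdot w$ for $\phi \in \Sc(\hat{Z}_{\sigma})$, using that $\sigma(\alpha)$ is flat at the singular character of $\hat{Z}_{\sigma}$ so that $\phi \cdot \sigma(\alpha) \in \Sc(\hat{Z}_{\sigma}^{\times})$, to which $\sigma^{-1}$ may be applied. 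Well-definedness independent of the decomposition follows via an approximate-identity argument in $\Sc(\hat{Z}^{\times})$. By construction the $\Sc(\hat{Z})$- and $\Sc(\hat{Z}_{\sigma})$-actions on $V$ agree on the overlap $\Sc(\hat{Z}^{\times})$, so they glue to a $\Sc(P^{1}(\hat{Z}))$-module structure, with differentiability over $\Sc(\hat{Z}_{\sigma})$ automatic from $\Sc(\hat{Z}^{\times}) \subset \Sc(\hat{Z}_{\sigma})$.

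For (2) $\Rightarrow$ (3), Lemma \ref{lem:P1} supplies the isomorphism $V \simeq V_{\hat{Z}} +_{V_{\hat{Z}^{\times}}} V_{\hat{Z}_{\sigma}}$. Combining $V = V_{\hat{Z}}$ (smoothness of $\rho$) and $V = V_{\hat{Z}_{\sigma}}$ (the hypothesis) with the identity $V_{\hat{Z}} \cap V_{\hat{Z}_{\sigma}} = V_{\hat{Z}^{\times}}$ from the proof of Lemma \ref{lem:P1} forces $V = V_{\hat{Z}^{\times}}$. For (3) $\Rightarrow$ (1), I transport the $\Sc(\hat{Z}^{\times})$-module structure via the Fr\'echet algebra isomorphism $\sigma$ to a $\Sc(\hat{Z}_{\sigma}^{\times})$-module structure and extend through $\Sc(\hat{Z}_{\sigma}^{\times}) \subset \Sc(\hat{Z}_{\sigma})$ to obtain a smooth $Z_{\sigma}$-representation $\rho'$ on $V$; equation \eqref{eq:sigma_z_action} then gives $d\rho'(\sigma(z)) = d\rho(z)^{-1}$, certifying integrability.

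The main obstacle is (1) $\Rightarrow$ (3). Given $A = d\rho(z)$ invertible with integrable inverse $A^{-1} = d\rho'(z'')$ via a smooth representation $\rho'$ of a $1$-parameter group $\tilde{Z}$, I identify $\tilde{Z} \cong Z_{\sigma}$ by sending $z''$ to $\sigma(z)$. The plan is to argue that the commuting generators $A$ and $A^{-1}$ integrate to commuting smooth group actions of $Z$ and $Z_{\sigma}$ on $V$, yielding a smooth representation of the abelian group $Z \times Z_{\sigma}$ and hence a differentiable $\Sc(\hat{Z} \times \hat{Z}_{\sigma})$-module structure; the relation $A \cdot A^{-1} = I$ then concentrates $V$ on the closed Nash subvariety $\Sigma \subset \hat{Z} \times \hat{Z}_{\sigma}$ cut out by $d\chi(z)\,d\psi(\sigma z) = 1$, which is the graph of $\sigma:\hat{Z}^{\times}\to\hat{Z}_{\sigma}^{\times}$ and projects isomorphically onto $\hat{Z}^{\times}$. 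Combined with the hereditary and factorization properties of Schwartz algebras from Section \ref{sec:Sc}, this identifies $V$ as a differentiable $\Sc(\hat{Z}^{\times})$-module. The two delicate points are upgrading the commutation of Lie algebra generators to commutation of the integrated group actions on a nuclear Fr\'echet space (for which I would appeal to the uniqueness of smooth integration of a given operator), and extracting genuine differentiability from the support condition (using the ideal-theoretic description of differentiable modules and Theorem \ref{thm:U}).
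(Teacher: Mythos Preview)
Your $(2)\Rightarrow(3)$, $(3)\Rightarrow(1)$, and $(3)\Rightarrow(2)$ are essentially correct and close in spirit to the paper, which runs the cycle $(1)\Rightarrow(2)\Rightarrow(3)\Rightarrow(1)$. For $(2)\Rightarrow(3)$ the paper uses the one-line computation $V=\Sc(\hat Z_\sigma)V=\Sc(\hat Z_\sigma)\Sc(\hat Z)V=\Sc(\hat Z^\times)V$ (the product being taken inside $\Sc(P^1(\hat Z))$) rather than invoking Lemma~\ref{lem:P1}, but your route via $V_{\hat Z}\cap V_{\hat Z_\sigma}=V_{\hat Z^\times}$ amounts to the same thing. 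For $(3)\Rightarrow(1)$ you should note, as the paper does, that extending the transported $\Sc(\hat Z_\sigma^\times)$-action to $\Sc(\hat Z_\sigma)$ is exactly Lemma~\ref{lemma:B_A-algebra_with_approximate_identity}.

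The real issue is your $(1)\Rightarrow(3)$. Your first delicate point (upgrading $[A,A^{-1}]=0$ to commutation of $\rho$ and $\rho'$) is manageable and the paper makes an analogous leap. But your second delicate point is a genuine gap. Having a differentiable $\Sc(\hat Z\times\hat Z_\sigma)$-module on which the polynomial relation $\xi\eta=1$ holds tells you that for every $g\in\Sc(\hat Z\times\hat Z_\sigma)$ and every $k$, $g$ and $(\xi\eta)^k g$ act identically; hence every $v$ lies in $(\xi^k\Sc(\hat Z))\cdot V$ for all $k$. This does \emph{not} give $v\in\Sc(\hat Z^\times)V$: the intersection $\bigcap_k(\xi^k\Sc(\hat Z))V$ need not equal $(\bigcap_k\xi^k\Sc(\hat Z))V=\Sc(\hat Z^\times)V$. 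Equivalently, the support condition yields only that $\overline{\Sc(\hat Z^\times)V}=V$ (the dual argument via Theorem~\ref{thm:U} gives density, exactly as in the paper's $(1)\Rightarrow(2)$), not the surjectivity required for differentiability. Passing from ``supported on $\Sigma$'' to ``differentiable over $\Sc(\Sigma)$'' would require a Schwartz division theorem for $1-\xi\eta$ that you have not supplied, and hereditary/factorization alone do not bridge this.

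The paper sidesteps this by proving $(1)\Rightarrow(2)$ instead: it shows directly that the two actions $\rho_\sigma$ and $\phi\mapsto\rho(\sigma^{-1}\phi)$ \emph{agree} on $\Sc(\hat Z_\sigma^\times)$ (this is the substantive computation, equations~\eqref{eq:sigma_rho_action}--\eqref{eq:sigma_rho_action_2}), so that they glue to a genuine $\Sc(P^1(\hat Z))$-module structure. Differentiability over $\Sc(\hat Z_\sigma)$ is then automatic from the smooth $Z_\sigma$-representation, and the factorization identity $\Sc(\hat Z)\cdot\Sc(\hat Z_\sigma)=\Sc(\hat Z^\times)$ inside $\Sc(P^1(\hat Z))$ finishes $(2)\Rightarrow(3)$. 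Your graph picture is conceptually attractive, but to make it work you would in effect have to prove this same agreement on the overlap---which is precisely the paper's $(1)\Rightarrow(2)$.
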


\begin{proof}
 We start by noticing that $(\rho,V)$ being non-singular is equivalent to the following condition:

 \noindent(1') There exists a smooth representation $(\rho_{\sigma},V)$ of $Z_{\sigma}$ such that
 \[
  d\rho_{\sigma}(\sigma(z))=[d\rho(z)]^{-1} \qquad \mbox{for all $z\in \z^{\times}.$}
 \]
Effectively, if $z\in \z$ is any non-zero element, then we can define
\[
 d\rho_{\sigma}(\sigma(z)):=[d\rho(z)]^{-1}
\]
and extend linearly to define a representation of $\z_{\sigma}$ on $V$ that, by hypothesis, is integrable. The converse implication is immediate.

$(1)\Rightarrow (2).$ Assuming condition $(1'),$ we notice that the representation $(\rho_{\sigma},V)$ of $Z_{\sigma}$ induces a differentiable action of $\Sc(\hat{Z}_{\sigma})$ on $V$ satisfying
\[
 \rho_{\sigma}(\sigma(z)\sigma(\phi))=d\rho_{\sigma}(\sigma(z))\rho_{\sigma}(\sigma(\phi)),
\]
for $\phi\in \Sc(\hat{Z}^{\times})$. On the other hand, if we define
\[
 [\sigma\rho](\psi)=\rho(\sigma^{-1}(\psi))
\]
for $\psi\in \Sc(\hat{Z}_{\sigma}^{\times})$, then according to equation (\ref{eq:sigma_z_action}), we have that
\begin{align*}
 [\sigma\rho](\sigma(z)\sigma(\phi)) & = \rho(\sigma^{-1}(\sigma(z^{-1}\phi)))  = \rho(\z^{-1}\phi)  = [d\rho(z)]^{-1}\rho(\phi)  =d\rho_{\sigma}(\sigma(z))[\sigma\rho](\sigma(\phi))
\end{align*}
for all $\phi\in \Sc(\hat{Z}^{\times})$. It follows that
\[
[\sigma\rho](\exp t\sigma(z)\cdot \sigma(\phi))=\rho_{\sigma}(\exp t\sigma(z))[\sigma\rho](\sigma(\phi))
\]
In other words, the map
\[
 \sigma\rho:\Sc(\hat{Z}_{\sigma}^{\times})\longrightarrow \End(V)
\]
intertwines the natural action of $Z_{\sigma}$ on $\Sc(\hat{Z}_{\sigma}^{\times})$ with the (left) action of $Z_{\sigma}$ on $\End(V)$ induced by $\rho_{\sigma}.$ It follows that $\sigma\rho$ would then also intertwine the corresponding actions of $\Sc(\hat{Z}_{\sigma}),$ that is, for all $\psi\in \Sc(\hat{Z}_{\sigma})$ we would have that
\begin{align}[\sigma\rho](\psi\sigma(\phi)) & =\rho_{\sigma}[\psi](\sigma(\phi)) \nonumber \\
        \rho(\sigma(\psi)\phi) & = \rho_{\sigma}(\psi)\rho(\phi) \label{eq:sigma_rho_action}
\end{align}
Similarly, if we set
\[
 [\sigma^{-1}\rho_{\sigma}](\phi)=\rho_{\sigma}(\sigma(\phi))\qquad \mbox{for $\phi\in \Sc(\hat{Z}_{\sigma}^{\times})$,}
\]
then it's straightforward to check that
\[
 [\sigma^{-1}\rho_{\sigma}](\sigma^{-1}(\psi)z)=[\sigma^{-1}\rho_{\sigma}](\sigma^{-1}(\psi))d\rho(z)
\]
for all $\psi\in \Sc(\hat{Z}_{\sigma}^{\times})$ and hence, similarly to equation
(\ref{eq:sigma_rho_action}), we have that

\begin{align}
[\sigma^{-1}\rho_{\sigma}](\sigma^{-1}(\psi)\phi)&=[\sigma^{-1}\rho_{\sigma}](\sigma^{-1}(\psi))\rho(\phi) \nonumber \\
        \rho_{\sigma}(\psi\sigma(\phi)) & = \rho_{\sigma}(\psi)\rho(\phi) \label{eq:sigma_rho_action_2}
\end{align}
for all $\psi\in \Sc(\hat{Z}_{\sigma}^{\times})$. It follows from equations (\ref{eq:sigma_rho_action}) and (\ref{eq:sigma_rho_action_2}) that for all $\phi \in \Sc(\hat{Z}^{\times}),$ $\psi \in \Sc(\hat{Z}_{\sigma}^{\times})$ we have that
\[
 [\sigma\rho](\psi\sigma(\phi))=\rho_{\sigma}(\psi)\rho(\phi)=\rho_{\sigma}(\psi\sigma(\phi)).
\]
But now, since $\Sc(\hat{Z}_{\sigma}^{\times})$ is a factorizable Fr\'echet algebra, we have that, for all $\psi\in \Sc(\hat{Z}_{\sigma}^{\times})$,
\[
 \rho_{\sigma}(\psi)=[\sigma\rho](\psi)=\rho(\sigma^{-1}(\psi)).
\]
Using this equation, and the isomorphism $\Sc(P^{1}(\hat{Z})) \simeq \Sc(\hat{Z})+_{\Sc(\hat{Z}^{\times})} \Sc(\hat{Z}_{\sigma}),$ we can define a linear map
\[
 \overline{\rho}: \Sc(P^{1}(\hat{Z})) \longrightarrow \End(V)
\]
by setting
\begin{equation}\label{eq:rho_bar_definition}
 \overline{\rho}(\phi+\psi)=\rho(\phi)+\rho_{\sigma}(\psi) \qquad \mbox{for $\phi\in \Sc(\hat{Z}),$ $\psi\in \Sc(\hat{Z}_{\sigma})$.}
\end{equation}
We can see that this map is well defined because if $\psi=\sigma(\phi),$ for some $\phi\in \Sc(\hat{Z}^{\times}),$ then
\[
 \rho_{\sigma}(\psi)=\rho_{\sigma}(\sigma(\phi))= \rho(\phi),
\]
that is, using the definition given in equation (\ref{eq:rho_bar_definition}), we have that
\[
 \overline{\rho}(\phi)=\rho(\phi)=\rho_{\sigma}(\psi)=\overline{\rho}(\psi).
\]
We now want to show that $\overline{\rho}$ actually defines a ring homomorphism. Notice that, from the definition of $\overline{\rho}$ the only thing that we need to check is that
\begin{equation}\label{eq:check_ring_homomorphism}
 \overline{\phi} \cdot \overline{\psi}=\overline{\phi\cdot \psi} \qquad \mbox{for $\phi\in \Sc(\hat{Z}),$ $\psi\in \Sc(\hat{Z}_{\sigma})$,}
\end{equation}
where in the right hand side of the above equation we are taking the product in $\Sc(P^{1}(\hat{Z}))$ and hence $\phi \cdot \psi\in \Sc(\hat{Z}^{\times})\subset \Sc(P^{1}(\hat{Z}))$. Effectively, equations (\ref{eq:sigma_rho_action}) and (\ref{eq:sigma_rho_action_2}) imply that for $\phi\in \Sc(\hat{Z})$ and $\psi\in \Sc(\hat{Z}_{\sigma}^{\times})$ or for $\phi \in \Sc(\hat{Z}^{\times})$ and $\psi \in \Sc(\hat{Z}_{\sigma})$ we already have that $\overline{\rho}(\phi)\overline{\rho}(\psi)=\overline{\rho}(fh).$ In particular, if $\psi_{0}\in \Sc(\hat{Z}_{\sigma}^{\times})$, then
\begin{align*}
 \overline{\rho}(\phi)\overline{\rho}(\psi)\overline{\rho}(\psi_{0})  =\overline{\rho}(\psi\psi_{0})  = \overline{\rho}(\phi\psi\psi_{0})
    = \overline{\rho}(\phi\psi)\overline{\rho}(\psi_{0})
\end{align*}
where in the first step we are using that $\psi\psi_{0}\in \Sc(\hat{Z}_{\sigma}^{\times})$ and in the second step we are using that $\phi\psi\in \Sc(\hat{Z}^{\times}).$ It follows that for all $v\in \rho(\Sc(\hat{Z}^{\times}) V=\rho_{\sigma}(\Sc(\hat{Z}_{\sigma}^{\times}) V$
\[
 \overline{\rho}(\phi)\overline{\rho}(\psi) v = \overline{\rho}(\phi\psi)v
\]
and, by continuity, the same holds for all $v \in \overline{\rho(\Sc(\hat{Z}^{\times}) V}\subset V.$ Therefore, if we can show that $\overline{\rho(\Sc(\hat{Z}^{\times}) V}=V$ then we would have succeed in proving equation (\ref{eq:check_ring_homomorphism}) and we would, thus, have a representation $\overline{\rho}$ of $\Sc(P^{1}(\hat{Z}))$ extending $\rho$ and such that $\overline{\rho}|_{\Sc(\hat{Z}_{\sigma})}=\rho_{\sigma}$ is differentiable. (Because $\rho_{\sigma}$ was induced from a differentiable action of $Z_{\sigma}$ on $V.$) Now, from Theorem \ref{thm:U}, we have
\[
 \big[V/\rho(\Sc(\hat{Z}_{\sigma}^{\times}))V\big]'=\{\lambda\in V' \, | \, \mbox{$[d\rho(z)]^{k}\lambda =0\}$ for some $k\in \N$.}
\]
On the other hand, since $d\rho(z)$ is invertible, we have that if $[d\rho(z)]^{k}\lambda =0,$ then $\lambda = 0.$ That is,
\[
 \big[V/\rho(\Sc(\hat{Z}_{\sigma}^{\times}))V\big]' = 0.
\]
In other words, $\rho(\Sc(\hat{Z}_{\sigma}^{\times}))V$ is dense in $V,$ as we wanted to show.

$(2)\Rightarrow (3).$ Now let us assume that $(\rho,V)$ extends to a representation $\overline{\rho}$ of $\Sc(P^{1}(\hat{Z}))$ and that $V$ is differentiable as a $\Sc(\hat{Z}_{\sigma})$-module. Then, since $V$ was already differentiable for $\Sc(\hat{Z})$ and since this algebra and $\Sc(\hat{Z}_{\sigma})$ are factorizable, it follows that
\[
 V=\Sc(\hat{Z}_{\sigma})V=\Sc(\hat{Z}_{\sigma})\Sc(\hat{Z})V=\Sc(\hat{Z}^{\times}) V,
\]
that is, $V$ is differentiable for $\Sc(\hat{Z}^{\times})$ as we wanted to show.

$(3)\Rightarrow (1).$ Now assume that $V$ is differentiable as an $\Sc(\hat{Z}^{\times})$-module. Then, since the action of $z$ on $\Sc(\hat{Z}^{\times})$ is invertible for all $z\in \z$, it follows that the operator $d\rho(z)$ is invertible with inverse given by
\[
 [d\rho(z)]^{-1}\rho(\phi)v=\rho(z^{-1}\phi)v \qquad \mbox{for $\phi\in \Sc(\hat{Z}^{\times})$, $v\in V.$}
\]
Now define an action of $\Sc(\hat{Z}_{\sigma}^{\times})$ on $V$ by setting $\rho_{\sigma}(\psi)=\rho(\sigma^{-1}(\psi))$ for $\psi \in \Sc(\hat{Z}_{\sigma}^{\times}).$ Since $\Sc(\hat{Z}_{\sigma}^{\times})$ is an $\Sc(\hat{Z}_{\sigma})$-algebra with  an approximate identity and since $V$ is clearly differentiable for $\Sc(\hat{Z}_{\sigma}^{\times})$, it follows from Lemma \ref{lemma:B_A-algebra_with_approximate_identity} that $\rho_{\sigma}$ extends to an  action of $\Sc(\hat{Z}_{\sigma})$. 
\DimaA{Clearly this action is differentiable, 
 and hence defines a smooth representation of $Z_{\sigma}$.} Furthermore,
\begin{multline*}
 d\rho_{\sigma}(\sigma(z))\rho_{\sigma}(\sigma(\phi))  =\rho_{\sigma}(\sigma(z)\sigma(\phi))=\\      = \rho_{\sigma}(\sigma(z^{-1}\phi))  = \rho(z^{-1}\phi)  = [d\rho(z)]^{-1}\rho(\phi) = [d\rho(z)]^{-1}\rho_{\sigma}(\sigma(\phi))
\end{multline*}
for $\phi\in \Sc(\hat{Z}^{\times})$ and $z\in \z^{\times}.$ Using again that $V$ is differentiable for $\Sc(\hat{Z}^{\times})$, it follows that
\[
 d\rho_{\sigma}(\sigma(z))=[d\rho(z)]^{-1}
\]
as we wanted.
\end{proof}

The following example shows that it is not automatic that a map like the one given in equation (\ref{eq:rho_bar_definition}) should necessarily define an algebra homomorphism.


\begin{example}
 The module
 \[
  V=\Sc(\hat{Z})\otimes_{\Sc(\hat{Z}^{\times})}\Sc(\hat{Z}_{\sigma})
 \]
satisfies the following properties:
\begin{enumerate}
 \item If $(\rho,V)$ and $(\rho_{\sigma},v)$ denote the natural actions of $Z$ and $Z_{\sigma}$ on $V$, then $V$ is differentiable for $\Sc(\hat{Z})$ and $\Sc(\hat{Z}_{\sigma}).$
 \item For $\psi\in \Sc(\hat{Z}_{\sigma}^{\times}),$
 \[
  \rho_{\sigma}(\psi)=\rho(\sigma^{-1}(\psi)).
 \]

 \item If $\phi\in \Sc(\hat{Z})$ and $\psi\in \Sc(\hat{Z}_{\sigma}^{\times})$ or $\phi\in \Sc(\hat{Z}^{\times})$ and $\psi\in \Sc(\hat{Z}_{\sigma}),$ then
 \[
  \rho(\phi)\rho_{\sigma}(\psi)=\rho(fh).
 \]
\item If $\phi_{0}\in \Sc(\hat{Z})$ and $\psi_{0}\in \Sc(\hat{Z}_{\sigma})$ satisfy $\phi_{0}(0)=1$ and $\psi_{0}(0)=1,$ then
\[
 \rho(\phi_{0})\rho_{\sigma}(\psi_{0})\neq \rho(\phi_{0}\psi_{0}).
\]
Effectively, let $T$ be the linear functional on $V$ induced by
\[
 T(\phi\otimes \psi)=\phi(0)\psi(0).
\]
Then, for $\phi_{0}$ and $\psi_{0}$ as above it's clear that
\[
 \rho(\phi_{0})\rho_{\sigma}(\psi_{0}) \cdot T = T
\]
but
\[
  \rho(\phi_{0}\psi_{0}) \cdot T =0.
\]

\end{enumerate}

\end{example}

\begin{defn}
Denote the category of non-singular representations of $Z$ by $\Rep^{\infty}(Z)^{\times}$.
\end{defn}
A fundamental example of such representation is $\varpi\cong\Sc(\hat{Z}^{\times})$.
Let $\overline{\varpi}$ denote the complex conjugate of $\varpi$. It is also isomorphic to the dual of $\varpi$, since $\varpi$ has a canonical scalar product given by $\int_{\hat{Z}^{\times}} fg d\chi$ for any Haar measure $d\chi$ on $\hat{Z}^{\times}\simeq \R^{\times}$.
From Lemma \ref{lem:nonsin} we obtain the following corollary.

\begin{cor}\label{cor:w}
For any $\rho\in \Rep^{\infty}(Z)^{\times}$, the action map $\Sc(\hat{Z}^{\times})\hot \rho\to \rho$ defines an isomorphism $\overline{\varpi}\hot_Z\rho\to \rho$.
\end{cor}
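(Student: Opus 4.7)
My plan is to prove the corollary in stages. First, by Lemma \ref{lem:nonsin}, $\rho$ is differentiable as a module over $A := \Sc(\widehat{Z}^\times)$, with compatibility $\rho(d\chi(z)\phi) = d\rho(z)\rho(\phi)$. A direct computation using this compatibility shows $\mu_0(-d\chi(z)\phi\otimes v + \phi\otimes d\rho(z)v) = 0$, so the action map $\mu_0\colon A\hot\rho\to\rho$ descends to a map $\bar\mu\colon\overline{\varpi}\hot_Z\rho\to\rho$, which is surjective by differentiability of $\rho$.

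For injectivity, I intend to identify $\overline{\varpi}\hot_Z\rho$ with the relative tensor product $A\hot_A\rho$, which in turn equals $\rho$ via the action map. The latter identification is standard for differentiable modules: the inverse $v\mapsto\lim_n[e_n\otimes v]$ (with $\{e_n\}$ an approximate identity in $A$) is well-defined because writing $v=\sum_i a_iv_i$ via the factorization property gives $[e_n\otimes v]\equiv\sum_i[e_na_i\otimes v_i]$, which converges since $e_na_i\to a_i$ in $A$.

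The identification $\overline{\varpi}\hot_Z\rho = A\hot_A\rho$ amounts to showing the two closed subspaces of $A\hot\rho$,
\[
K_1 := \overline{\fz\cdot(A\hot\rho)}, \qquad K_2 := \overline{\Span\{\psi\phi\otimes v - \phi\otimes\rho(\psi)v : \psi,\phi\in A,\ v\in\rho\}},
\]
coincide. The inclusion $K_1\subset K_2$ is routine: the multiplier $d\chi(z)$, which is not itself an element of $A$, is approximated by $\psi_n := d\chi(z)\eta_n\in A$ with $\{\eta_n\}$ an approximate identity, so the corresponding $K_2$-generators converge to the $K_1$-generator $d\chi(z)\phi\otimes v - \phi\otimes d\rho(z)v$.

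The main obstacle is the reverse inclusion $K_2\subset K_1$, for which I will use Fourier duality. Writing $\psi = \int_Z f(g)\varpi(g)\,dg$ for a suitable $f\in\Sc(Z)$ (so that $\rho(\psi) = \int_Z f(g)\rho(g)\,dg$ by the same compatibility), we obtain
\[
\psi\phi\otimes v - \phi\otimes\rho(\psi)v = \int_Z f(g)\bigl[\varpi(g)\phi\otimes v - \phi\otimes\rho(g)v\bigr]\,dg.
\]
Each integrand lies in $K_1$: starting from the $Z$-coinvariance relation $\overline{\varpi}(g)\phi\otimes\rho(g)v - \phi\otimes v\in K_1$, the substitution $v\mapsto\rho(g^{-1})v$ combined with $\overline{\varpi}(-g) = \varpi(g)$ yields $\varpi(g)\phi\otimes v - \phi\otimes\rho(g)v\in K_1$. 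Closedness of $K_1$ together with the Schwartz decay of $f$ then places the integral in $K_1$, completing the proof.
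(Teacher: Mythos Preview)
Your proof is correct. The paper offers no argument beyond the sentence ``From Lemma~\ref{lem:nonsin} we obtain the following corollary,'' so you are supplying exactly the details the authors suppress: Lemma~\ref{lem:nonsin} gives differentiability of $\rho$ over $A=\Sc(\hat Z^{\times})$, and your two-step argument (identifying the $Z$-coinvariant kernel $K_1$ with the balancing kernel $K_2$, then invoking $A\hot_A\rho\cong\rho$) is the natural way to cash this out. One small remark: in the step ``$A\hot_A\rho\cong\rho$'' you construct the right inverse $\sigma(v)=\lim_n[e_n\otimes v]$ but do not explicitly check it is a \emph{left} inverse; the clean way to close this is to note that for any $w\in A\hot\rho$ one has $[e_n\otimes\alpha(w)]=e_n\cdot[w]$ in $(A\hot\rho)/K_2$ (verify on pure tensors and extend by continuity), so $\sigma(\bar\mu([w]))=\lim_n e_n\cdot[w]=[w]$ by Lemma~\ref{lem:App1}.
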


\begin{example}
For any non-trivial character $\chi\in \widehat{Z}^{\times}$, let $\varpi_{(\chi)}$ denote the quotient of $\varpi$ by the submodule of functions flat at $\chi$. Then $\varpi_{(\chi)}\in \Rep^{\infty}(Z)^{\times}$. Note that it is the representation of formal power series at $\chi$, and that it is indecomposable. \end{example}
It is easy to see that the only irreducible representations in $\Rep^{\infty}(Z)^{\times}$ are the one-dimensional ones.
%
%
%
%

\section{Non-singular representations of the Heisenberg group}\label{sec:cat}

Let $(W,\omega)$ be a symplectic vector space, and $H=W\ltimes Z$ be the corresponding Heisenberg group with center $Z$.

\begin{definition}
We say that a representation $(\rho, V)$ of $H$ is \emph{non-singular} if it is non-singular for $Z$.
Denote the category of non-singular representations of $H$ by $\Rep^{\infty}(H)^{\times}$.
\end{definition}
In this section we establish an equivalence between $\Rep^{\infty}(H)^{\times}$ and the category  $\Rep^{\infty}(Z)^{\times}$ of non-singular representation of $Z$. We view this as a generalization of the Stone-von-Neumann theorem.

Recall that   $\varpi\in \Rep^{\infty}(Z)^\times$ denotes the representation of $Z$ in the algebra $\Sc(\widehat{Z}^{\times})$ of Schwartz functions on the space $\widehat{Z}^{\times}$ of non-trivial unitary characters of $Z$, given by $\varpi(z)\phi(\chi):=\chi(z)\phi(\chi)$.

Choose a Lagrangian subspace $L\subset W$. Then $L\times Z$ is a maximal abelian subgroup of $H$.
\DimaA{Define a representation $\Omega \in \Rep^{\infty}(H)^\times$ by $\Omega:=\ind_{L\times Z}^H (\varpi),$ where we let $L$ act trivially on $\varpi$.}

\begin{prop}\label{prop:LagIndep}
Let $R\subset W$ be another Lagrangian subspace. Then $$\ind_{L\times Z}^H\varpi\simeq \ind_{R\times Z}^H\varpi.$$
\end{prop}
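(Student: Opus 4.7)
The plan is to reduce the general statement to the case of two transversal Lagrangians, and in that case construct an explicit $H$-equivariant intertwiner via a parameter-dependent symplectic Fourier transform. For the reduction, note that given any two Lagrangians $L, R \subset W$, there exists a third Lagrangian $M$ transversal to both: the subset of Lagrangians transversal to a given one is open and dense in the Lagrangian Grassmannian, so the intersection of two such subsets is nonempty. Hence it suffices to produce the isomorphism when $L, R$ are transversal, since the general case then follows by composing $\ind_{L\times Z}^H\varpi \simeq \ind_{M\times Z}^H\varpi \simeq \ind_{R\times Z}^H\varpi$.

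Assume now $W = L \oplus R$. Using Definition \ref{def:ind} together with the identification $H/(L\times Z) \cong R$, I would identify
\[
\ind_{L\times Z}^H \varpi \;\cong\; \Sc(R,\varpi) \;\cong\; \Sc\bigl(R\times \widehat Z^{\times}\bigr),
\]
and similarly $\ind_{R\times Z}^H\varpi \cong \Sc(L\times \widehat Z^{\times})$. Under these identifications, the $H$-action becomes explicit: on $\ind_{L\times Z}^H\varpi$, an element $r_0\in R$ acts by translation $f(r,\chi)\mapsto f(r+r_0,\chi)$, an element $l_0\in L$ acts by the multiplier $f(r,\chi)\mapsto e^{d\chi(z)\omega(l_0,r)}f(r,\chi)$, and $z_0\in Z$ acts by $f(r,\chi)\mapsto \chi(z_0)f(r,\chi)$; the description for $\ind_{R\times Z}^H\varpi$ is obtained by interchanging $L$ and $R$. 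At each fixed $\chi\in \widehat Z^{\times}$ these are precisely the two Schr\"odinger models of the classical Stone--von Neumann representation $\pi_\chi$.

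I would then define the intertwiner $T:\ind_{L\times Z}^H\varpi \to \ind_{R\times Z}^H\varpi$ as the fiberwise symplectic Fourier transform
\[
(Tf)(l,\chi) := \abs{d\chi(z)}^{n/2}\int_R f(r,\chi)\, e^{-d\chi(z)\omega(l,r)}\, dr, \qquad n=\dim L,
\]
where $z$ is a fixed generator of $\z$. The intertwining identity $T\circ \pi_L(h) = \pi_R(h)\circ T$ for every $h\in H$ is then a pointwise-in-$\chi$ verification using the standard Fourier intertwining properties, and the inverse of $T$ is the Fourier transform in the opposite direction (with the opposite sign in the exponent), so once $T$ is shown to be well defined we get an isomorphism of \Fre $H$-modules.

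The main obstacle is to check that $T$ maps $\Sc(R\times \widehat Z^{\times})$ continuously into $\Sc(L\times \widehat Z^{\times})$ uniformly in $\chi$. The issue is that $d\chi(z)\to 0$ as $\chi$ approaches the (excluded) trivial character, so both the normalization $\abs{d\chi(z)}^{n/2}$ and the kernel degenerate at the boundary. The remedy is the flatness at $\chi=0$ built into $\Sc(\widehat Z^{\times})$ via Theorem \ref{thm:U}: after the invertible change of variable $r\mapsto r/d\chi(z)$, the operator $T$ factors as an ordinary (parameter-free) Fourier transform on $R$ composed with a $\chi$-dependent dilation of the $\widehat Z^{\times}$-factor, and the infinite-order vanishing of $f$ at $\chi=0$ exactly absorbs the singularity introduced by the dilation, so that both factors preserve $\Sc(R\times \widehat Z^{\times})$. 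This yields the required continuous isomorphism and completes the argument in the transversal case, and hence the proof.
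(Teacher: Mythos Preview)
Your proof is correct and takes essentially the same approach as the paper: the intertwiner is integration of $f$ over $R$, which in your explicit transversal coordinates is precisely the $\chi$-dependent symplectic Fourier transform you write down. The paper states this in one line as $\check f(g)=\int_{R} f(rg)\,dr$ without the reduction to the transversal case or the continuity discussion, so your version is a more detailed unpacking of the same map (and your reduction step is a reasonable precaution, since the raw integral need not converge absolutely when $L\cap R\neq 0$).
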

\begin{proof}
We have an isomorphism  given by
\begin{equation}\label{=RootChange}
f\mapsto \check f, \quad \check f(g)=\int_{r\in R/(L\cap R)}f(rg)dr,
\end{equation}
where $f$ and $\check f$ are $\varpi$-valued functions on $H$.
\DimaA{For a canonical choice of the Lebesgue measure $dr$ see \cite[\S 1.4]{LV}.
The inverse map is given by 
\begin{equation}\label{=RootChangeBack}
f\mapsto \hat f, \quad \hat f(g)=\int_{l\in L/(L\cap R)}f(lg)dl,
\end{equation}
To see that both compositions of the two operators are identities, let $f\in \ind_{L\times Z}^H \varpi$. We have to show that $\widehat{ \check f}=f$. Equivalently, for any $\chi\in \widehat{Z}^{\times}$ we have to show 
$\widehat{ \check f}(\chi)=f(\chi)$. This is a version of the Fourier inversion formula, proven in \cite[\S 1.4]{LV}. 
In the same way one shows that the other composition is \Raul{the} identity.
}
\end{proof}
\begin{remark}
By the definition of Schwartz induction, we obtain natural projections $p_L:\Sc(H,\varpi)\to \ind_{L\times Z}^H \varpi$ and $p_R:\Sc(H,\varpi) \to \ind_{R\times Z}^H \varpi$, so it is natural to ask whether these projections are related by the isomorphism between $\ind_{L\times Z}^H \varpi$ and $\ind_{R\times Z}^H \varpi$ given above. This is, however, not the case and
\[
 \widecheck {p_L(f)}\neq p_R(f)
\]
in general.
\end{remark}


\begin{proof}[Proof of Theorem \ref{thmB}]

By Lemma \ref{lem:Frob} and Corollary \ref{cor:w}, the functor $V\mapsto V\hot_H \overline{\Omega}$ is isomorphic to the functor $V\mapsto V_L$, while the functor $V_0\mapsto \Raul{V_0}\hot_Z\Omega$ is isomorphic to the functor given by $V_0\mapsto \ind_{L\times Z}^HV_0$, where we let $L$ act on $V_0$ trivially.
We thus have to show that the two functors are mutually quasi-inverse.

Let $V\in \Rep^{\infty}(H)^{\times}$ and restrict $V$ to $L\times Z$. Since $L\times Z$ is abelian, we can use the Fourier transform to obtain an $\Sc(\widehat{L}\times \widehat{Z})$-module structure on $V$. Now restrict this structure to $\Sc(\widehat{L}\times \widehat{Z}^{\times})$ and notice that, since $V\in\Rep^{\infty}(H)^{\times}$, then $V$ is actually a differentiable $\Sc(\widehat{L}\times \widehat{Z}^{\times})$-module. Now, using the action of $H$ on $V$, we can construct a differentiable imprimitivity system $\widetilde{V}\in \Sc\mod_{H,\widehat{L}\times \widehat{Z}^{\times}}$, where the action of $H$ on $\widehat{L}\times \widehat{Z}$ is given by
$$((y,z_1)\cdot(\psi,z'))(x,z_2)=\psi(x)+z'(\omega(x,y))+z'(z_2),$$
where $x,y\in W; \, z_1,z_2\in Z; \, \psi\in \widehat{L},$ and $z'\in \widehat{Z}^{\times}$.
Identify $\widehat{Z}^{\times}$ with $\R^{\times}$ and define an automorphism of  $\widehat{L}\times \widehat{Z}$ by $(\psi,z')\mapsto (z'\psi,z')$. After this automorphism, the action of $H$ becomes the product of a transitive action on $\widehat{L}$ and the trivial action on $\widehat{Z}$. The stabilizer of every point is $L\times Z$. Thus, by Theorem \ref{thm:imprimitivity}, taking the fiber at $0\in \widehat{L}$ defines an equivalence of categories  between $\Sc\mod_{H,\widehat{L}\times \widehat{Z}^{\times}}$ and
$\Sc\mod_{L\times Z,\widehat{Z}^{\times}}$. The fiber at zero is isomorphic to the space $V_L$ of $L$-coinvariants.

We note that the action of  $\Sc(\widehat{L}\times \widehat{Z}^{\times})$ on $V$ is given by the action of $H$. Moreover,
the functor $V\mapsto \widetilde{V}$ defines an equivalence of categories between $\Rep^{\infty}(H)^{\times}$ and the subcategory of $\Sc\mod_{H,\widehat{L}\times \widehat{Z}^{\times}}$ consisting of modules on which the action of  $\Sc(\widehat{L}\times \widehat{Z}^{\times})$
is given by the action of $H$. This category is in turn equivalent to the subcategory of $\Sc\mod_{L\times Z,\widehat{Z}^{\times}}$ consisting of modules on which $L$ acts trivially, and the action of $\Sc(\widehat{Z}^{\times})$ is given by the action of $Z$ (and is differentiable). This category in turn is equivalent to $\Rep^{\infty}(Z)^{\times}$.
\end{proof}

\begin{remark}\label{rem:NatTran}
The natural transformations $CI\to \Id$ and $IC\to \Id$ constructed in the proof can be also defined using the canonical map $\Omega\hot_H\overline{\Omega}\to \varpi$ that we will now describe.
Fix a Lebesgue measure $dw$ on $W/L$.
Let $f\in \Omega =\ind_{L\times Z}^H\varpi,$ and $g\in \overline{\Omega} =\ind_{L\times Z}^H\overline{\varpi}$. Then $f$ and $g$ are $L\times Z$-equivariant functions from $H$ to $\varpi=\Sc(\widehat{Z}^{\times})$. Using the algebra structure on $\varpi$, we can consider the product $fg$ as a function $fg:H\to \varpi$. This function is $L\times Z$-invariant, and thus can be viewed as a function on $H/(L\times Z)=W/L$. We map $f\otimes g\mapsto \int_{W/L}fgdw$.
This map intertwines the actions of $Z$, where $Z$ acts on $\Omega\hot_H\overline{\Omega}\to \varpi$  through acting on $\Omega$.

Passing to $(Z,\chi)$-coinvariants (for any $\chi \in \widehat{Z}^{\times})$ we obtain from this construction the classical scalar product on the oscillator representation $\Omega_{\chi}$.

\end{remark}

\section{Generalized Weil representation}\label{sec:Weil}
 In this section we define and study the Weil representation of  $\widetilde{\Sp}(W)$ on
$\Omega$.
To define it  is the same as to define a group homomorphism $\Sp(W)\to \Aut_{\C}(\Omega)/\{\pm\Id\}$, where $\Aut_{\C}(\Omega)$ denotes the group of all continuous invertible linear operators on the space of $\Omega$. \Raul{To write this group homomorphism, we will realize $\Omega=\ind_{L\times Z}^H \varpi$ as the space of $\varpi$-valued Schwartz functions on $R$, where $R$ is a complementary Lagrangian to $L$ in $W.$ }


Let $n:=\dim W/2$. Introduce a basis on $W$ such that $L$ is spanned by the last $n$ basis vectors, and the symplectic form is given in this basis by
\[
 J=\begin{pmatrix}0 & \Id \\
-\Id & 0 \\
\end{pmatrix}.
\]
 We identify the   Lagrangian spanned by the first $n$ coordinates with $R\cong W/L$. To define an analogue of Fourier transform we first  define  a special operator $S$ on the space of $\varpi$ by
\begin{equation}
(S\phi)(\chi_s):=\sqrt{ |s|}\phi(\chi_s),\text{ where }\chi_s(t):=\exp(2\pi i st)
\end{equation}
We now define
analogues of Fourier transform and its inverse on the space of $\Omega$ by
\begin{equation}
\cF_{\varpi}(f)(x):=S^{-1}\int_{L}\varpi(-x^ty)f(y)dy \qquad \text{ and } \qquad \cF_{\varpi}^{-1}(f)(x):=S\int_{L}\varpi(x^ty)f(y)dy.
\end{equation}

Define the homomorphism $\Sp(W)\to \Aut_{\C}(\Omega)/\{\pm\Id\}$ on the generators of  $\Sp(W)$ by
\begin{eqnarray}\label{=Weil}
\sigma\begin{pmatrix}A & 0 \\
0 & (A^*)^{-1} \\
\end{pmatrix}:&f(x)\mapsto\pm(\det A)^{-1/2}f(A^{-1}x)\\ \label{=Weil2}
\sigma\begin{pmatrix}\Id & 0 \\
C & \Id \\
\end{pmatrix}:&f(x)\mapsto\pm\varpi(-x^{t}Cx/2)f(x)\\ \label{=Weil3}
\sigma\begin{pmatrix}0 & \Id \\
-\Id & 0 \\
\end{pmatrix}:&f(x)\mapsto\pm i^{n/2}\cF_{\varpi}^{-1}(f)(x)
\end{eqnarray}
%
%

\begin{lemma}\label{lem:Weil}
The formulas \eqref{=Weil}-\eqref{=Weil3} define a representation of $\widetilde{\Sp}(W)$ on $\Omega$. Moreover, this representation, together with the action of $H$, defines a representation of $\widetilde{\Sp}(W)\ltimes H$.
\end{lemma}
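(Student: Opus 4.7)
The strategy, already signalled in the introduction, is to use the fibration of $\Omega$ over $\widehat{Z}^{\times}$ in order to reduce every assertion to the classical case. Concretely, under the identification $\Omega \cong \Sc(W/L)\htimes \varpi \cong \Sc(W/L\times \widehat{Z}^{\times})$, each non-trivial character $\chi = \chi_s \in \widehat{Z}^{\times}$ gives a continuous evaluation map
\[
\mathrm{ev}_\chi : \Omega \longrightarrow \Omega_\chi := \Sc(W/L),
\]
where $\Omega_\chi$ is precisely the realization of the classical oscillator representation of $H$ with central character $\chi$. The family $\{\mathrm{ev}_\chi\}_{\chi\in\widehat{Z}^{\times}}$ is jointly injective, since a Schwartz function on $W/L\times\widehat{Z}^{\times}$ that vanishes pointwise in $\chi$ is identically zero. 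Thus any operator identity in $\End_\C(\Omega)$ can be checked after evaluation at every $\chi$.

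First I would verify that each operator in \eqref{=Weil}--\eqref{=Weil3} is a well-defined continuous endomorphism of $\Omega$. The operators \eqref{=Weil} and \eqref{=Weil2} are, respectively, pull-back by a linear automorphism of $W/L$ and pointwise multiplication by the smooth $\varpi$-valued function $x\mapsto \varpi(-x^tCx/2)$; both preserve $\Sc(W/L\times\widehat{Z}^{\times})$ tautologically. The genuinely delicate operator is $\cF_\varpi^{-1}$ in \eqref{=Weil3}, which, evaluated at $\chi_s$, reads
\[
(\cF_\varpi^{-1} f)(x)(\chi_s) \;=\; |s|^{1/2}\int_L e^{2\pi i s\, x^{t}y}\,f(y)(\chi_s)\,dy.
\]
To see that this preserves $\Sc(W/L\times \widehat{Z}^{\times})$ I would rescale $y\mapsto y/s$ for $s\ne 0$ and observe that the factors of $|s|^{-n}$ produced by the rescaling are absorbed by the fact that any $f\in\Sc(W/L\times\widehat{Z}^{\times})$ vanishes to infinite order at the boundary locus $\{s=0\}$ (by Theorem \ref{thm:U}). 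This reduces the problem to the standard fact that the unnormalized Fourier transform on $\R^n$ preserves Schwartz functions, with a smooth parameter $s\in\R^\times$.

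Second, I would observe that each $\mathrm{ev}_\chi$ intertwines the operators \eqref{=Weil}--\eqref{=Weil3} with the corresponding classical formulas \cite[(4.24)-(4.26)]{Fol} for the oscillator representation of $\widetilde{\Sp}(W)$ on $\Omega_\chi$; this is immediate for \eqref{=Weil}, \eqref{=Weil2}, and follows from the computation displayed above for \eqref{=Weil3}. By the classical Segal--Shale--Weil theorem \cite{Seg,Sha,Wei}, on each $\Omega_\chi$ these formulas, with a consistent sign choice, define a genuine representation of $\widetilde{\Sp}(W)$ satisfying the standard conjugation relations $\sigma(g)\rho(h)\sigma(g)^{-1}=\rho(g\cdot h)$ with the $H$-action. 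Given any word $w$ in the generators and the prescribed relation in $\widetilde{\Sp}(W)\ltimes H$, the identity holds in $\End_\C(\Omega_\chi)$ for every $\chi$; joint injectivity of the $\mathrm{ev}_\chi$ then lifts it to $\End_\C(\Omega)$.

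The main obstacle will be the coherent sign choice: one must lift each generator of $\Sp(W)$ to an operator on $\Omega$ whose $\mathrm{ev}_\chi$ is, for every $\chi$, the classical Weil operator. Since the cocycle controlling the $\widetilde{\Sp}$-extension is determined by signs of $\det$ and of $i^{n/2}$ in \eqref{=Weil}--\eqref{=Weil3} and is therefore independent of $\chi$, the classical double cover descends uniformly and yields a single representation of $\widetilde{\Sp}(W)\ltimes H$ on $\Omega$. After this coherence check, the proof is complete.
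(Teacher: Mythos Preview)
Your proof is correct and follows the same strategy as the paper's: realize $\Omega$ in $\Sc(W/L\times\widehat{Z}^{\times})$, evaluate at each $\chi_s\in\widehat{Z}^{\times}$, and invoke the classical Weil representation fiberwise. The paper's only extra wrinkle is that, after fixing $s$, it conjugates by the dilation $f(x)\mapsto f(\sqrt{|s|}\,x)$ to normalize to $s=\pm 1$ before citing \cite[(4.24)--(4.26), Proposition 4.46]{Fol}, whereas you appeal to the classical theorem directly for each $\chi$; your additional care about well-definedness of $\cF_\varpi^{-1}$, joint injectivity of the $\mathrm{ev}_\chi$, and sign coherence fills in details the paper leaves implicit.
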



\begin{proof}
For the proof  we realize $\Omega$ in the space $\Sc(W/L\times \widehat{Z}^{\times})$, and further identify $\widehat{Z}^{\times}$ with $\R^{\times}$ by $\chi_s\leftrightarrow s$.
In these terms the formulas \eqref{=Weil}-\eqref{=Weil3} become
\begin{eqnarray}\label{=WeilE}
\sigma\begin{pmatrix}A & 0 \\
0 & (A^*)^{-1} \\
\end{pmatrix}:&F(x,s)\mapsto\pm(\det A)^{-1/2}F(A^{-1}x,s)\\ \label{=WeilE2}
\sigma\begin{pmatrix}\Id & 0 \\
C & \Id \\
\end{pmatrix}:&F(x,s)\mapsto\pm\chi_s(x^{t}Cx/2)F(x,s)\\ \label{=WeilE3}
\sigma\begin{pmatrix}0 & \Id \\
-\Id & 0 \\
\end{pmatrix}:&F(x,s)\mapsto\pm i^{n/2}\cF_{s}^{-1}(F(x,s)),
\end{eqnarray}
where
$$   \cF_{s}^{-1}(f)(x):=\sqrt{|s|}\int_{L}\chi_s(x^ty)f(y)dy.$$
 The statement that these formulas define a representation amounts then to verifying some identities for any  $F(x,s) \in \Sc(W/L\times \R^{\times})$.
Fix $s\in \R^{\times}$.
Conjugating the operators in \eqref{=WeilE}-\eqref{=WeilE3} by the dilation $f(x) \mapsto f(\sqrt{|s|}x)$, we can reduce to the case $s=\pm 1$. In this case the representation becomes one of the two classical Weil representations (see {\it e.g.} \cite[(4.24)-(4.26) and Proposition 4.46]{Fol}). In the same way one verifies that these formulas together with the action of $H$ define a representation of $\widetilde{\Sp}(W)\ltimes H$.
\end{proof}

\begin{proof}[Proof of Proposition \ref{prop:OmExt}]
We just proved in Lemma \ref{lem:Weil} the existence of an extension of the action of $H$ on $\Omega$ to an action of $\widetilde{\Sp}(W)\ltimes H$, and it is left to prove the uniqueness of such an extension $\pi$.
Since $\widetilde{\Sp}(W)$ commutes with $Z$ inside $\widetilde{\Sp}(W)\ltimes H$, $\pi$ preserves the kernel of the projection $\Omega\to \Omega_{\chi}$, and thus defines an action $\pi_{\chi}$ \DimaA{on $\Omega_\chi$}. We can realize $\Omega$ in $\Sc(W/L\times\widehat{Z}^{\times})$, and $\Omega_{\chi}$ in $\Sc(W/L)$. Under these identifications, the projection $\Omega\to \Omega_{\chi}$ is evaluation \DimaA{at $\chi\in \widehat{Z}^{\times} $}. Thus, every element of $\Omega$ is determined by its images in all ${\chi}$, and therefore $\pi$ is determined by the collection of all $\pi_{\chi}$. Thus it is enough to prove the uniqueness of $\pi_{\chi}$. Since $\Mp(W)$ has no characters, it is enough to prove uniqueness as a projective representation. That is, we have to prove that for every $g\in \Sp(W)$, there is at most a one-dimensional space of isomorphisms between $\Omega_{\chi}$ and $\Omega_{\chi}^g$ as representations of $H$, where $\Omega_{\chi}^g$ is $\Omega_{\chi}$ with the action of $H$ twisted by the \Raul{automorphism} of $H$ defined by $g$. This, in turn, follows from  $I$ being an equivalence \DimaA{of categories} and $\chi$ being one-dimensional, since $\Omega_{\chi}=I(\chi)$.
\end{proof}

\begin{proof}[Proof of Theorem \ref{thm:FJ}]
Let us first prove that $C^+I^+\cong \Id$. To prove this we have to prove that for any $\rho \in \Rep^{\infty}(\Sp\times Z)^{\times}$, the isomorphism $\alp_{\rho}:\rho\tilde\to  \rho\hot_Z\Omega\hot_H\Omega$ as representations of $Z$ constructed in the proof of Theorem \ref{thmB} commutes with the action of $\Mp$.
We will first prove this for special cases of $\rho$ and then deduce the general case.
\begin{enumerate}[{Case} 1]
\item $\rho$ is one-dimensional. \\
In this case $\rho\cong \rho\hot_Z\Omega\hot_H\overline{\Omega}$ is also one-dimensional, thus the action of $\Mp$ on both is trivial, and commutes with any operator.
\item $\rho \cong \varpi$. \\
Recall that the \Raul{underlying} space of $\varpi$ is $\Sc(\widehat{Z}^{\times})$. Thus we have to show that for any $\chi\in\widehat{Z}^{\times}$ and any $g\in \Mp$, $\alp_{\varpi}(\phi^g)(\chi)=(\alp_{\varpi}(\phi))^g(\chi)$.

Since the evaluation at $\chi$, as a map $\varpi\to \chi$, is a morphism in $\Rep^{\infty}(\widehat{Z})^{\times}$, and
 $\alp$ is a natural transformation, $\alp$ commutes with the evaluation at $\chi$. Thus $\alp_{\varpi}(\phi^g)(\chi)=(\alp_{\varpi}(\phi))^g(\chi)$ by the previous case.

\item $\rho \cong \varpi\hot_{\C}V$, where $Z$ acts only on $\varpi$.\\
This case follows from the previous one.\
\item General\\
This case again follows from the previous one, since any $\rho \in  \Rep^{\infty}(\widehat{Z})^{\times}$ is a quotient of a representation of the form $\varpi\hot_{\C}V$, and $\alp:CI\cong \Id$ is a natural transformation.
\end{enumerate}

Let us now prove $I^+C^+\cong \Id$ using a similar argument. The isomorphism $IC(\Omega)\cong \Omega$ intertwines the actions of $\Mp$ since by Proposition \ref{prop:OmExt} there is only one action of $\Mp$ on the space of $\Omega$ that is compatible with the action of $H$. This implies the statement in general, since every $\tau \in  \Rep^{\infty}(\widehat{Z})^{\times}$ is a quotient of a representation of the form $\Omega\hot_{\C}V$, and the isomorphism $IC\cong \Id$ is a natural transformation.
\end{proof}

\begin{remark}
\begin{enumerate}[(i)]
\item One can also show explicitly that the canonical map $\Omega\hot_H\overline{\Omega}\to \varpi$ is invariant under the diagonal action of the symplectic group, and deduce Theorem \ref{thm:FJ} from this fact, cf. Remark \ref{rem:NatTran} above.
\item The constructed action of $\Mp(W)$\ on $C(\rho)$ for any $\rho \in\Rep^{\infty}(Z)^{\times}$
is given by formulas completely analogous to \eqref{=Weil}-\eqref{=Weil3}, but with $\rho$ in place of $\varpi$.
\end{enumerate}
\end{remark}

\begin{proof}[Proof of Theorem \ref{thm:Ext}]
By Theorem \ref{thm:FJ}, it is enough to show that the restriction functor $\Rep^{\infty}(\Sp\times Z)^{\times}\to \Rep^{\infty}( Z)^{\times}$ has a unique quasi-inverse. This  follows from the argument in the proof of Theorem \ref{thm:FJ}.
\end{proof}

Let us now demonstrate formulas \eqref{=Weil}-\eqref{=Weil3} and Lemma \ref{lem:Weil}
in several examples. In the first two  examples we let $\rho$  be the Taylor expansion by $s$ of  $\exp(2\pi i s t)$ at $s=1$, of orders 2 and 3 respectively. We also let $W$ be two-dimensional.

\begin{exm}
$\rho$ is two-dimensional, with $\rho(t)=\begin{pmatrix}\exp(2\pi i t) & 2\pi it\exp(2\pi i t) \\
0 & \exp(2\pi i t) \\
\end{pmatrix}$
\end{exm}
Let us verify the Fourier inversion formula $\sigma(\begin{pmatrix}0 & \Id \\
-\Id & 0 \\
\end{pmatrix})^2f(x)= if(-x)$ for this example by a direct computation.
The action of $S$ in this case is $\begin{pmatrix}1 & 1/2 \\
0 & 1 \\
\end{pmatrix}$
and $\sigma(\begin{pmatrix}0 & \Id \\
-\Id & 0 \\
\end{pmatrix})$ is given by $\pm\begin{pmatrix}1 & E \\
0 & 1 \\
\end{pmatrix}\cF^{-1},$
where $E=(x\partial+\partial x)/2=x\partial+1/2$ is the symmetrized Euler operator, and $\cF^{-1}=\cF_{1}^{-1}$ is the classical inverse Fourier transform. Since $\cF^{-1}$ conjugates $E$ to $-E$, we have
$(\begin{pmatrix}1 & E \\
0 & 1 \\
\end{pmatrix}\cF^{-1})^2=i\cF^{-2}\Id$, and the statement follows now from the classical Fourier inversion formula.

\begin{exm}
$\rho$ is 3-dimensional, with
$\rho(t)=\begin{pmatrix}\exp(2\pi i t) & 2\pi it\exp(2\pi i t) & -2\pi^2t^2\exp(2\pi i t)\\
0 & \exp(2\pi i t) & 2\pi it\exp(2\pi i t)\\
0 & 0 & \exp(2\pi i t)
\end{pmatrix}$
\end{exm}
\begin{proof}[Proof of the Fourier inversion]
The action of $S$ in this case is $\begin{pmatrix}1 & 1/2 & -1/4\\
0 & 1 & 1/2 \\
0 & 0 & 1
\end{pmatrix}$.
Also, in this case
$\sigma(\begin{pmatrix}0 & \Id \\
-\Id & 0 \\
\end{pmatrix})=\pm i^{1/2}\begin{pmatrix}1 & E & E^2/2-E/2\\
0 & 1 & E \\
0 & 0 & 1
\end{pmatrix}\cF^{-1}$.
It is easy to see that this squares to $i\cF^{-2}\Id$.
\end{proof}

\begin{exm}
 Squaring the operator in \eqref{=WeilE3}  we get $F(x,s)\mapsto iF(-x,s)$. \Raul{Taking the derivative of} this identity in the variable $s$, we get generalizations of the previous examples to any order. Let us now show that in this example the formulas \eqref{=WeilE}-\eqref{=WeilE3} define a representation of the Lie algebra $\sll_2$. For this, we will conjugate the action of the lower-triangular nilpotent element $Y$ given by differentiating \eqref{=WeilE2} by the operator \Raul{given} in  \eqref{=WeilE3}, thus obtaining the action of the upper-triangular nilpotent $X$. Differentiating \eqref{=WeilE} we get the action of the diagonal element $H\in \sll_2$, and then check the standard $\sll_2$ relations.

\Raul{Differentiating} \eqref{=Weil2}, we have $$\sigma(Y)F(x,s)=\frac{\partial}{\partial c}\exp(-2\pi i s cx^2/2)F(x,s)|_{c=0}=-\pi isx^2F(x,s).$$
\Raul{Differentiating}  \eqref{=Weil}, we have $$\sigma(H)F(x,s)=\frac{\partial}{\partial c}\exp(-c/2)F(\exp(-c)x,s)|_{c=0}=-F(x,s)/2-x\partial_xF(x,s)=-EF(x,s).$$
Conjugating the action of $Y$ by  $\sigma(\begin{pmatrix}0 & \Id \\
-\Id & 0 \\
\end{pmatrix})$ we get
$$\sigma(X)(F)(x,s)=-\pi i s\cF(x^2\cF^{-1}(F(x,s)))(-xs),s)=(4\pi i s)^{-1}\partial_x^2F(x,s).$$
Since $[E,x^2]=2x^2, [E,\partial_x^2]=-2\partial_x^2,$ and $[\partial_x^2,x^2]=4E$, we get that $\sigma(X),\sigma(H),\sigma(Y)$ form an $\sll_2$-triple. If we substitute $s:=1$ we obtain the formulas for the classical infinitesimal Weil representation, see \cite[(4.43)]{Fol}.
\end{exm}


Let us now generalize another classical fact, saying that the Weil representation  only depends on the ``square class'' of the central character.
\begin{lemma}
Assume that $\rho$ is only supported on one square class. Then, as a representation of $\widetilde{\Sp}(W)$, $\sigma\cong \sigma_0\hot \rho,$ where $\sigma_0$ is the classical Weil representation corresponding to this square class, and the action of  $\widetilde{\Sp}(W)$ is on $\sigma_0$.
\end{lemma}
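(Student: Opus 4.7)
The plan is to construct an explicit intertwining isomorphism $T\colon \sigma \to \sigma_0 \hot \rho$ by spreading over the square class the dilation trick already used in the proof of Lemma \ref{lem:Weil}. Assume without loss of generality that $\rho$ is supported on the positive square class $\R^+ \subset \widehat{Z}^\times\cong\R^\times$; the negative case is strictly analogous. Then $\rho$ is naturally a smooth module over $\Sc(\R^+)$, and the remark following Theorem \ref{thm:Ext} identifies $\sigma = I(\rho)$ with the space $\Sc(W/L)\hot\rho$, on which $\widetilde{\Sp}(W)$ acts by formulas entirely analogous to \eqref{=Weil}--\eqref{=Weil3}, with $\rho$ in place of $\varpi$.

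First I would define the intertwiner in the universal case $\rho = \Sc(\R^+)$, where $\sigma$ is realized on $\Sc(W/L\times\R^+)$, by
\[
(TF)(x,s) := s^{n/4}\, F(\sqrt{s}\,x, s).
\]
This is a topological automorphism of $\Sc(W/L\times\R^+)$ induced by the Nash diffeomorphism $(x,s)\mapsto(\sqrt{s}\,x,s)$ together with multiplication by the Nash function $s^{n/4}$, with inverse $(T^{-1}G)(x,s) = s^{-n/4} G(x/\sqrt{s},s)$. Crucially $T$ commutes with multiplication by every $g(s)\in \Sc(\R^+)$, so by the equivalence established in Section \ref{sec:nonsing} between non-singular $Z$-representations supported on $\R^+$ and smooth $\Sc(\R^+)$-modules, $T$ extends functorially to a topological automorphism $T_\rho$ of $\Sc(W/L)\hot\rho$ for any such $\rho$.

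Next I would verify the intertwining relation $T_\rho\circ\sigma(g) = (\sigma_0(g)\otimes\Id_\rho)\circ T_\rho$ for $g$ running over the three generators $m_A$, $n_C$, $w$ appearing in \eqref{=Weil}--\eqref{=Weil3}. By naturality in $\rho$ it suffices to check this for $\rho = \Sc(\R^+)$, and in this case the identity becomes pointwise in $s$: for each fixed $s > 0$ the dilation $U_s\colon f(x)\mapsto s^{n/4} f(\sqrt{s}\,x)$ on $\Sc(W/L)$ conjugates the classical Weil representation $\sigma_1$ (central character $\chi_1$) to $\sigma_s$ (central character $\chi_s$). This is a direct change-of-variables calculation for each of the three generators, identical to the reduction to $s=\pm1$ carried out at the end of the proof of Lemma \ref{lem:Weil}.

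The main obstacle is the bookkeeping of the normalization: the exponent $n/4$ and the dilation factor $\sqrt{s}$ must be chosen so that the same $U_s$ simultaneously intertwines all three generators, balancing the factor in $\cF_s^{-1}$ against the quadratic phase $\chi_s(x^tCx/2)$. Once the intertwiner is pinned down in the universal case, the extension to arbitrary non-singular $\rho$ supported on one square class is then automatic from the $\Sc(\R^+)$-linearity of $T$ and the functoriality of the completed tensor product.
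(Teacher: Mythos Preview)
Your approach is essentially the same as the paper's: reduce to the universal module $\rho=\Sc(\R^{+})$ and use the fiberwise dilation $F(x,s)\mapsto F(\sqrt{s}\,x,s)$ as the intertwiner, exactly as in the reduction step of Lemma~\ref{lem:Weil}. The extra factor $s^{n/4}$ you include is harmless (it is $\Sc(\R^{+})$-linear and cancels on both sides), and note that with your conventions $U_s\sigma_1U_s^{-1}=\sigma_s$ actually makes $T$ an intertwiner $\sigma_0\hot\rho\to\sigma$ rather than in the direction you stated, but since you give $T^{-1}$ explicitly this is immaterial.
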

\begin{proof}
It is enough to prove \Raul{the result} for $\rho = A^+$, where $A^+$ is the representation of Schwartz functions on one square class $C$. Choosing a character $\chi \in C$, we identify $C$ with the multiplicative group of positive real numbers $\R_{>0}^{\times}$. We can then realize both representations in question in the Schwartz space $\Sc(H\times \R_{>0}^{\times})$, and define the intertwining operator by $$\varphi(F)(x,s):=F(\sqrt{|s|} x,s).$$
\end{proof}

\section{Generalized coinvariants}\label{sec:coinv}
%

\DimaA{Let $N$ be a unipotent real algebraic group, and $\fn$ be its Lie algebra.}
\begin{lemma}\label{lem:fin}
For any $i>0$, $\fn^i\Sc(N)$ has finite codimension in $\Sc(N)$.
\end{lemma}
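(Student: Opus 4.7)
I would prove the lemma by a double induction: outer induction on the nilpotency class of $\fn$, inner induction on $i$. The outer base case is $\fn$ abelian: the exponential map identifies $N$ with $\fn\cong\R^d$ as Nash manifolds and $\Sc(N)$ with $\Sc(\R^d)$, on which $\fn$ acts by ordinary partial derivatives. Under the Fourier transform, $\fn^i\Sc(\R^d)$ becomes $\sum_{|\alpha|=i}\xi^\alpha\Sc(\R^d)$, which by the classical Schwartz division theorem (Hadamard's lemma combined with a partition of unity) coincides with the ideal of Schwartz functions vanishing to order $i$ at the origin. The quotient $\Sc(\R^d)/\fm_0^i\Sc(\R^d)$ is then the finite-dimensional space of jets of order $<i$ at the origin.

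For the inductive step, let $\fn$ have nilpotency class $k\ge 2$, pick a nonzero $Z_0\in\fz(\fn)$ (nonempty by nilpotency), and set $Z=\exp(\R Z_0)\subset N$, $\bar N=N/Z$, $\bar\fn=\fn/\fz$. Then $\bar\fn$ has nilpotency class $k-1$. Fiber integration along the central $Z$-orbits yields a continuous surjection $p_*\colon\Sc(N)\twoheadrightarrow\Sc(\bar N)$, which is $\fn$-equivariant (with $\fn$ acting on $\Sc(\bar N)$ through $\bar\fn$, because the action of $\fz$ becomes a derivative along fibers and integrates to zero). Applying the abelian base case to $Z$ along each fiber, together with a check that Schwartz regularity transverse to $Z$ is preserved, gives $\ker p_*=Z_0\cdot\Sc(N)=\fz\Sc(N)$. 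Since $p_*$ is surjective and $\fn$-equivariant, $p_*(\fn^i\Sc(N))=\bar\fn^i\Sc(\bar N)$, producing an exact sequence
\begin{equation*}
0\longrightarrow \frac{\fz\Sc(N)}{\fz\Sc(N)\cap\fn^i\Sc(N)}\longrightarrow \frac{\Sc(N)}{\fn^i\Sc(N)}\longrightarrow \frac{\Sc(\bar N)}{\bar\fn^i\Sc(\bar N)}\longrightarrow 0.
\end{equation*}
By the outer inductive hypothesis applied to $\bar N$, the right-hand term is finite-dimensional.

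For the left-hand term, centrality of $Z_0$ gives the containment $Z_0\cdot\fn^{i-1}\subset\fn^i$ in $U(\fn)$, hence $Z_0\cdot\fn^{i-1}\Sc(N)\subset\fn^i\Sc(N)$. Therefore multiplication by $Z_0$ descends to a surjection $\Sc(N)/\fn^{i-1}\Sc(N)\twoheadrightarrow\fz\Sc(N)/(\fz\Sc(N)\cap\fn^i\Sc(N))$, with source finite-dimensional by the inner inductive hypothesis. The base of the inner induction is $i=1$, where $\fn^0\Sc(N)=\Sc(N)$ and the left-hand term vanishes.

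The main obstacle is the abelian base case, in particular the classical Schwartz division theorem identifying $\sum_{|\alpha|=i}\xi^\alpha\Sc(\R^d)$ with the ideal of Schwartz functions vanishing to order $i$ at $0$; the proof requires preserving Schwartz decay globally while using Hadamard-type local expansions, which is standard but not tautological. A secondary care point is the identification $\ker p_*=\fz\Sc(N)$, which reduces to the same $i=1$ division statement applied fiberwise along $Z$, with uniformity in the transverse $\bar N$-variables. Once these two ingredients are in place, the inductive machinery runs routinely.
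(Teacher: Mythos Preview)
Your outline is essentially sound, but there is one slip and the route is quite different from the paper's.

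\textbf{The slip.} Quotienting by a single central line $\R Z_0$ need not lower the nilpotency class: take $\fn=\fh_3\oplus\R$ (nilpotency class~$2$) and $Z_0$ in the abelian summand; then $\bar\fn\cong\fh_3$ still has class~$2$. The fix is immediate: either run the outer induction on $\dim\fn$ rather than on nilpotency class (then your one-dimensional fiber argument works verbatim), or quotient by the full center at each stage and establish the multi-variable analogue $\ker p_*=\fz(\fn)\,\Sc(N)$. With either adjustment your exact-sequence argument goes through.

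\textbf{Comparison with the paper.} The paper's proof is a two-line induction on $i$ alone, with no induction on the group. The case $i=1$ is imported from \cite[Lemma~B.1.4]{AG_ST}, and for the step one observes that the action map induces a surjection
\[
\fn^{\otimes i}\otimes_{\C}\bigl(\Sc(N)/\fn\Sc(N)\bigr)\;\twoheadrightarrow\;\fn^{i}\Sc(N)/\fn^{i+1}\Sc(N),
\]
whose source is finite-dimensional since both tensor factors are. Your approach trades this citation for a self-contained argument (the abelian base via Fourier transform plus the fibering over $\bar N$), at the cost of a second induction and the analytic care points you flag (Schwartz division, uniformity of $\ker p_*=Z_0\cdot\Sc(N)$ in the transverse variables). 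The paper's argument is shorter precisely because it outsources the $i=1$ case; yours has the virtue of explaining where that case comes from.
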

\begin{proof}
For $i=1$ this is \cite[Lemma B.1.4]{AG_ST}. Thus $$\dim \fn^i\Sc(N)/\fn^{i+1}\Sc(N)\leq\dim (\fn^i\otimes (\Sc(N)/\fn\Sc(N)))<\infty.$$
This implies the general case by induction.
\end{proof}

\begin{defn}
Define $I_N:=\bigcap\fn^i\Sc(N)$ and $\Sc(N)_{(N)}:=\Sc(N)/I_N$.
\end{defn}

\begin{exm}
For the additive group $\R$, $I_{\R}$ consists of Schwartz functions whose Fourier transform is flat at zero. Thus, $\Sc(\R)_{(\R)}$ is isomorphic via Fourier transform to the space of power series $\C[[t]]$, on which each $x\in \R$ acts by multiplication by $\exp(ixt)$ .
\end{exm}
\DimaA{\begin{lem}\label{lem:func}
Any continuous functional on $\Sc(N)$ that vanishes on $I_N$ is annihilated by some power of $\fn$.
\end{lem}
\begin{proof}
One can present the space of Schwartz functions $\Sc(N)$ as an inverse limit of Banach completions $\Sc(N)_{\sigma}$, where $\sigma$ is a semi-norm on $\Sc(N)$ that involves only differential operators up to some finite order.
Thus any continuous functional on $\Sc(N)$ is continuous in one of the semi-norms $\sigma$. Thus, if it vanishes on $I_N$ it vanishes on the closure of $I_N$ with respect to $\sigma$. This closure lies in $\fn^i\Sc(N)$ for some $i$. Thus the functional is annihilated by $\fn^i$.
\end{proof}

\begin{cor}
The spaces $\Sc(N)_{(N)}$ and $\lim\limits _{\ot}\Sc(N)/\fn^i\Sc(N)$ are nuclear \Fre spaces, and the natural map $\Sc(N)_{(N)} \to \lim\limits _{\ot}\Sc(N)/\fn^i\Sc(N)$ is an isomorphism.
\end{cor}
\begin{proof}
By Lemma \ref{lem:fin}, $\fn^i\Sc(N)$ is a closed subspace of $\Sc(N)$, since continuous linear operators with finite corank between nuclear \Fre spaces have closed image (see Lemma \ref{lem:FinGood}).
Thus $I_N$ is also closed, and thus $\Sc(N)_{(N)}=\Sc(N)/I_N$\ is a nuclear \Fre space. This also implies that $\Sc(N)/\fn^i\Sc(N)$ is a \Fre space for every $i$, and thus so is their limit $\lim\limits _{\ot}\Sc(N)/\fn^i\Sc(N)$. The last statement of the lemma follows from Lemma \ref{lem:func} and the  duality equivalence between NF and DNF spaces.
\end{proof}
}
\begin{defn}
For any $V\in \DimaA{\Rep^{\infty}(N)}$, define $V_{(N)}:=V/\overline{I_NV}$.
\end{defn}

\begin{remark}
It is possible that $I_NV$ is already closed.
\end{remark}

\begin{lemma}\label{lem:GenFrobN}
$V_{(N)}\cong \Sc(N)_{(N)}\hot_NV:=(\Sc(N)_{(N)}\hot V)_N$.
\end{lemma}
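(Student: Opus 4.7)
The plan is to identify both sides of the claimed isomorphism with explicit quotients of $V$.

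First, I would recall the basic reciprocity $\Sc(N)\hot_N V\cong V$, which follows from Lemma \ref{lem:Frob} applied with the trivial subgroup $\{e\}\subset N$ (using that $N$ is unimodular so $\delta_N=1$) combined with $\ind_{\{e\}}^N\C=\Sc(N)$. Under this identification, the class of $f\otimes v$ corresponds to the action $\pi(f)v=\int_N f(g)\pi(g)v\,dg\in V$.

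Next, I would apply $(-)\hot_N V$ to the short strict exact sequence
$$0\to I_N\to \Sc(N)\to \Sc(N)_{(N)}\to 0$$
of nuclear \Fre $N$-modules ($I_N$ being closed as an intersection of closed subspaces). Since $V$ is nuclear \Fre, $(-)\hot V$ preserves this strict exactness, and the diagonal coinvariants functor $(-)_N$ is right exact as it is, by definition, the Hausdorff quotient by the closure of the image of $\fn$. Combining these gives the exact sequence
$$I_N\hot_N V\longrightarrow \Sc(N)\hot_N V\longrightarrow \Sc(N)_{(N)}\hot_N V\longrightarrow 0.$$
Together with the reciprocity isomorphism $\Sc(N)\hot_N V\cong V$, this identifies $\Sc(N)_{(N)}\hot_N V$ with the \Fre cokernel of the composed map $I_N\hot_N V\to V$.

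Finally, I would trace the map $I_N\hot_N V\to V$ back to the restriction of the action map $f\otimes v\mapsto \pi(f)v$, which shows that its image in $V$ is exactly $\pi(I_N)V=I_N V$. The \Fre cokernel is therefore $V/\overline{I_N V}=V_{(N)}$, as required. The main obstacle I expect is the topological one: making sure that $(-)\hot V$ and $(-)_N$ preserve enough strict exactness for the cokernel identification to be legitimate. If this turns into a nuisance, I would instead proceed concretely, defining $\Sc(N)_{(N)}\hot_N V\to V_{(N)}$ by $[f]\otimes v\mapsto [\pi(f)v]$, verifying well-definedness from the fact that $\pi(I_N)V\subset \overline{I_N V}$ and that the diagonal $N$-action on $\Sc(N)\hot V$ descends compatibly through the action map, and constructing the inverse from the surjection $V\twoheadrightarrow V_{(N)}$ composed with the quotient $V\cong \Sc(N)\hot_N V\twoheadrightarrow \Sc(N)_{(N)}\hot_N V$.
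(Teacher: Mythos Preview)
Your argument is correct and rests on the same pivot as the paper's proof: the identification $\Sc(N)\hot_N V\cong V$ (which the paper obtains from \cite[Corollary~A.0.5]{GGS} and you obtain from Lemma~\ref{lem:Frob} with $H=\{e\}$), followed by passing to the quotient by $I_N$. The only difference is in how that second step is packaged: the paper invokes \cite[Lemma~A.0.4]{GGS} to say that the $N$-action on $\Sc(N)\hot V$ through $\Sc(N)$ alone is isomorphic to the diagonal one, so that under $\Sc(N)\hot_N V\cong V$ the residual $N$-action lives entirely on the $\Sc(N)$ factor and taking $(\cdot)_{(N)}$ visibly yields $\Sc(N)_{(N)}\hot_N V$; you instead run the short exact sequence $0\to I_N\to\Sc(N)\to\Sc(N)_{(N)}\to 0$ through the right-exact functor $(-)\hot_N V$ and identify the resulting cokernel. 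These are two phrasings of the same computation, and your fallback ``concrete'' version is essentially the paper's argument spelled out.
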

\begin{proof}
We have two actions of $N$ on $\Sc(N)\hot V$: the action on $\Sc(N)$ and the diagonal action. By \cite[Lemma A.0.4]{GGS}, the two representations are isomorphic. By \cite[Corollary A.0.5]{GGS}, we have $V\cong \Sc(N)\hot_N V$, where $\hot_N$ uses the
action of $N$ on $\Sc(N)$ by right shifts. Under this isomorphism, the action of $N$ on $V$ translates to the action of $N$ on $\Sc(N)$ by left shifts.
\end{proof}

\begin{lemma}\label{lem:co*} \DimaA{For any $V\in \DimaA{\Rep^{\infty}(N)}$ we have}
$$V_{(N)}^*\cong \{\xi\in V^*\, | \, I_N\xi =0\}\cong \{\xi \in V^*\, |\, \fn^i\xi=0 \text{ for some }i\}\cong \lim_{\to} (V/\fn^iV)^* \cong (\lim_{\ot} V/\fn^iV)^*$$
\end{lemma}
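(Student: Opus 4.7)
My plan is to establish the four isomorphisms in sequence, with the first, third, and fourth being fairly standard applications of \Fre duality, while the second carries the substantive content.

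For the first isomorphism $V_{(N)}^* \cong \{\xi \in V^* : I_N \xi = 0\}$, note that by definition $V_{(N)} = V/\overline{I_N V}$ is a \Fre quotient, so Hahn-Banach identifies its continuous dual with the annihilator of $\overline{I_N V}$ in $V^*$; continuity immediately reduces this to the annihilator of $I_N V$, which under the contragredient action of $\Sc(N)$ on $V^*$ is precisely $\{\xi : I_N \xi = 0\}$. For the third isomorphism, the same Hahn-Banach argument applied at each stage gives $(V/\overline{\fn^i V})^* = \{\xi : \fn^i \xi = 0\}$, and the ascending union over $i$ is by definition the direct limit. For the fourth isomorphism, any continuous functional on the \Fre inverse limit $\lim_{\ot} V/\overline{\fn^i V}$ is dominated by the pullback of a continuous seminorm from some stage, and since the transition maps are surjective (so by Mittag-Leffler the canonical projections from the limit onto each $V/\overline{\fn^i V}$ are onto) the functional factors through that stage.

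The heart of the argument is the second isomorphism. The inclusion $\{\xi : \fn^i \xi = 0 \text{ for some } i\} \subseteq \{\xi : I_N \xi = 0\}$ is immediate from $I_N \subseteq \fn^i \Sc(N)$, which gives $I_N V \subseteq \fn^i V$ and hence $\fn^i \xi = 0 \Rightarrow I_N \xi = 0$. For the non-trivial reverse inclusion, I would use Lemma \ref{lem:GenFrobN} to rewrite $V_{(N)} \cong \Sc(N)_{(N)} \hot_N V$, and invoke the corollary to Lemma \ref{lem:fin}, which embeds $\Sc(N)_{(N)}$ densely in $\lim_{\ot} \Sc(N)/\fn^i \Sc(N)$. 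Tensoring this embedding with $V$ over $N$ yields a continuous map with dense image $V_{(N)} \to \lim_{\ot} V/\overline{\fn^i V}$, so that any $\xi \in V_{(N)}^*$ extends to a continuous functional on the inverse limit and hence, by the fourth isomorphism already established, factors through some $V/\overline{\fn^i V}$, yielding $\fn^i \xi = 0$ as required.

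The main obstacle is justifying that the topology on $V_{(N)}$ genuinely agrees with the one induced from the inverse system $\{V/\overline{\fn^i V}\}$, so that Hahn-Banach legitimately produces the claimed extension of a functional from $V_{(N)}$ to the inverse limit. Via Lemma \ref{lem:GenFrobN} this reduces to the corresponding topological statement for $\Sc(N)_{(N)}$, which in turn rests on the finite codimension assertion of Lemma \ref{lem:fin} together with nuclearity of the ambient \Fre spaces, ensuring that the completed tensor product with $V$ inherits the expected topological behavior.
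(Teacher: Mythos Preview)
Your handling of the first, third, and fourth isomorphisms matches the paper's: it too labels the first and third as clear and attributes the last to NF--DNF duality.

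For the second isomorphism the paper takes a different route, and the difference is precisely the obstacle you flag. Rather than trying to show that the map $V_{(N)}\to \lim_{\ot} V/\overline{\fn^iV}$ is a topological embedding and then extending $\xi$ by Hahn--Banach, the paper fixes $\xi$ with $I_N\xi=0$ and argues pointwise. For each $v\in V$ the matrix-coefficient distribution $\xi_v\colon f\mapsto \xi(f\cdot v)$ on $N$ vanishes on $I_N$; via Lemma~\ref{lem:GenFrobN} (and the finite-codimension structure of Lemma~\ref{lem:fin}) one sees that $\xi_v$ is annihilated by some $\fn^{i(v)}$. Then
\[
V=\bigcup_{i\ge 1}\{v\in V:\xi(\fn^i\Sc(N)\cdot v)=0\}
\]
is a countable union of closed linear subspaces of a \Fre space, and the \emph{Baire category theorem} forces one of them to be all of $V$, yielding a uniform $i$ with $\fn^i\xi=0$.

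This Baire step is the idea your argument is missing. It replaces the delicate question of whether tensoring with $V$ over $N$ and passing to an inverse limit preserves the topological-embedding property --- which you correctly identify as unresolved --- by the much softer observation that a \Fre space is not a countable union of proper closed subspaces. Your proposed reduction of the obstacle to the $\Sc(N)_{(N)}$ case is not obviously sufficient: even granting the topological statement for $\Sc(N)_{(N)}$, one would still need that completed tensor product with $V$ and passage to $N$-coinvariants interact with the inverse limit in a topologically faithful way, and this is not established.
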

\begin{proof}
First and third isomorphism are clear. For the second one, by the Baire category theorem it is enough to show that for any $v\in V$ the corresponding distribution $\xi v$  on $N$ is killed by some power of $\fn$. This follows from the previous lemma. The last isomorphism follows from the duality equivalence between NF and DNF spaces.
\end{proof}

\begin{cor}\label{cor:lim}
The map $V_{(N)}\to \lim\limits_{\ot} V/\overline{\fn^iV}$ is an \DimaA{isomorphism}.
\end{cor}

\begin{lemma}\label{lem:GenFrob}
For \DimaA{any real algebraic group $G\supset N$ and any $V\in \Rep^{\infty}(G)$}, we have $$V_{(N)}\cong V\hot_G \Sc(G)_{(N)}$$ \end{lemma}
This is proven in the same way as Lemma \ref{lem:GenFrobN}.

\begin{lemma}\label{lem:pol}
We have $\Sc^*(N)^{(\fn)}\cong \C[N],$ where $\Sc^*(N)^{(\fn)}$ denotes the space of tempered distributions on $N$ on which $\fn$ acts locally nilpotently, and $\C[N]$ denotes the space of distributions consisting of products of a Haar measure on $N$ by polynomials on $N$.
\end{lemma}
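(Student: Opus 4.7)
The plan is to use the exponential map $\exp\colon \fn\to N$, which for nilpotent $N$ is a Nash diffeomorphism identifying Haar measure with Lebesgue measure up to a scalar, thereby transporting the statement to Fourier analysis on the vector space $\fn$. First I would check the easy containment $\C[N]\subseteq \Sc^*(N)^{(\fn)}$. Choose a basis of $\fn$ compatible with the descending central series $\fn=C_1\supseteq C_2\supseteq \cdots$ (where $C_{j+1}=[\fn,C_j]$), assign weight $j$ to basis vectors lying in $C_j\setminus C_{j+1}$, and extend this to a weighted grading on $\C[\fn]\cong \C[N]$. In exp coordinates the left-invariant vector field associated to $X\in C_j$ is a finite polynomial vector field on $\fn$ with leading term $\partial_X$ and correction terms built from $(\ad x)^k X\in C_{j+k}$; using $[C_a,C_b]\subseteq C_{a+b}$, one checks that $X^L$ strictly decreases weighted degree by at least $j$ on homogeneous polynomials. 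Hence $\fn^k\subset \cU(\fn)$ kills all polynomials of weighted degree less than $k$, giving local nilpotency of the $\fn$-action on $\C[N]$.

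For the reverse inclusion, take $\xi\in \Sc^*(N)^{(\fn)}$ with $\fn^i\xi=0$ for some $i$. The key step is to show that in exp coordinates the two filtrations coincide, $\fn^i\Sc(N)=\fn^i\Sc(\fn)$ as subspaces of $\Sc(\fn)=\Sc(N)$, where the left-hand side uses the invariant vector field action $X\mapsto X^L$ and the right-hand side uses the standard constant-coefficient action $X\mapsto \partial_X$. Granting this equality, Fourier transform carries $\fn^i\Sc(\fn)$ onto $\m_0^i\Sc(\fn^*)$, where $\m_0\subset \C[\fn^*]$ is the maximal ideal at the origin. Dualizing, $\{\xi\,\vert\,\fn^i\xi=0\}$ corresponds under Fourier to distributions on $\fn^*$ annihilated by $\m_0^i$, which are finite sums $\sum_{|\alpha|<i}c_\alpha \partial^\alpha\delta_0$ supported at $0$. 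Inverting Fourier gives polynomials on $\fn$ of ordinary degree less than $i$ times Lebesgue, which in exp coordinates lie in $\C[N]\cdot dn$, completing the proof.

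The main obstacle is proving the coincidence $\fn^i\Sc(N)=\fn^i\Sc(\fn)$ of the two filtrations in exp coordinates. The inclusion $\fn^i\Sc(\fn)\subseteq \fn^i\Sc(N)$ follows by expressing $\partial_X=X^L-(X^L-\partial_X)$ and iteratively rewriting the correction $X^L-\partial_X$, a polynomial vector field with coefficients vanishing at the origin, as products of other invariant vector fields acting on suitable Schwartz functions; for instance, in the Heisenberg group one has $\partial_x f=X^L f+Z^L((y/2)f)$, and the general case proceeds by induction on the nilpotency class, using that the correction terms built from $(\ad x)^k X$ for $X\in C_j$ land in $C_{j+k}$ and can be expressed as iterated brackets in $\fn^{k+1}\subset \cU(\fn)$. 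The reverse inclusion is handled symmetrically. Abstractly, the symmetrization isomorphism $S(\fn)\cong \cU(\fn)$ identifies $S^{\geq i}(\fn)$ with $\fn^i$ as filtered subspaces, and careful bookkeeping of the Baker--Campbell--Hausdorff corrections confirms that both filtrations define the same subspaces of $\Sc(\fn)$.
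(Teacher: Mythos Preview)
Your central claim that $\fn^i\Sc(N)=\fn^i\Sc(\fn)$ is false, and so is the abstract justification that symmetrization identifies $S^{\ge i}(\fn)$ with $\fn^i\subset\cU(\fn)$. In the three-dimensional Heisenberg algebra with $[X,Y]=Z$ one has $Z=XY-YX\in\fn^2\subset\cU(\fn)$, hence $Z^Lf=\partial_z f$ lies in $\fn^2\Sc(N)$ for every $f$. But $\fn^2\Sc(\fn)$ consists, after Fourier transform, of Schwartz functions vanishing to order at least two at the origin, whereas $\widehat{\partial_z f}=i\zeta\hat f$ vanishes only to first order when $\hat f(0)\neq 0$. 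Thus $\fn^2\Sc(N)\supsetneq\fn^2\Sc(\fn)$, and under symmetrization $Z\in\fn^2$ comes from $S^1(\fn)$, not $S^{\ge 2}(\fn)$. Your ``symmetric'' treatment of the reverse inclusion therefore cannot work.

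That said, your argument is salvageable: for the hard direction you only use the inclusion $\fn^i\Sc(\fn)\subseteq\fn^i\Sc(N)$, since if $\fn^i\xi=0$ in the Lie sense and this inclusion holds, then $\xi$ annihilates all order-$i$ constant-coefficient derivatives and is hence a polynomial by Fourier. Your sketch for this inclusion (rewriting $\partial_X$ as $X^L$ plus corrections valued deeper in the lower central series, absorbing polynomial factors by commuting them past the invariant vector fields, and iterating until the nilpotency terminates the process) is on the right track, but as written it is not a proof: you need to verify carefully that at each stage the polynomial factor can be moved inside the $\fn^i$-expression, which requires tracking commutators like $[X^L,p]$ and showing the resulting lower-order terms remain in $\fn^i\Sc(N)$.

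By contrast, the paper gives a much shorter argument by double induction: pick a central one-parameter subgroup $Z\subset N$, induct on $\dim N$ (the case $z\lambda=0$ descends to $N/Z$), and for general $\lambda$ with $z^{m+1}\lambda=0$ use the inductive hypothesis on $z\lambda$ to find a polynomial $q$ with $zq=z\lambda$, reducing to $z(\lambda-q)=0$. This completely avoids the comparison of filtrations.
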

\begin{proof}
We prove this by induction on the dimension of $\fn$.
For the base case if $\dim \fn =1,$ then $\fn$ is abelian, and the statement is standard.

For general nilpotent $\fn$, let $z$ be a non-zero central element, and
$Z$ be the corresponding one-dimensional central subgroup of $N$.
Let $\lambda \in [\Sc^*(N)]^{(\fn)}$. Then there exists $m\in \N$ such that $z^{m}\lambda =0.$ We want to prove that $\lambda \in \C[N]$. We will do it by induction on $m$: If $z\lambda=0$, then we can think of $\lambda$ as a distribution on $N/Z$ and hence it would be polynomial by our induction hypothesis on $\dim N$.  Now let us assume that the result is true up to some $m,$ i.e., if $z^{m}\lambda =0,$ then $\lambda \in \C[N]$ and assume that $\lambda \in [\Sc^{*}(N)]^{(\fn)}$ satisfies the equation
\[
z^{m+1}\lambda =0
\]
But then $z^m(z\lambda)=0$, thus $z\lambda \in \C[N]$ by the induction hypothesis.
Since $z$ is central we can check (using local coordinates so that $z$ corresponds to just taking derivative with respect to a local coordinate) that there exists a polynomial $q$ such that
\[
zq=z\lambda.
\]
But then
\[
z(\lambda-q)=0
\]
from which we conclude  that $\lambda -q,$ and hence $\lambda,$ is a polynomial.
\end{proof}

From Lemmas \ref{lem:GenFrob} and \ref{lem:pol} we obtain the following corollary.

\begin{cor}\label{cor:NUPol}
Let $U$ be a unipotent \DimaA{real algebraic} group and $N\subset U$ be a normal subgroup.
Then the natural map $\C[N]\otimes \Sc^*(U/N)\to \Sc^*(U)^{(N)}$ is an isomorphism.
\end{cor}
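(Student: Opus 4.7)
The strategy is to reduce the corollary to the combination of Lemma \ref{lem:pol} (the case $N=U$) and a tensor-product factorization arising from a Nash splitting $U\cong N\times U/N$; dualizing via Lemma \ref{lem:co*} then converts coinvariants into the desired space of locally nilpotent distributions.

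First, since $U$ is a unipotent Nash group and $N$ a Nash normal subgroup, Malcev coordinates yield a Nash section $s\colon U/N\to U$. The associated map $\Phi\colon N\times U/N\to U$, $(n,\bar u)\mapsto n\cdot s(\bar u)$, is a Nash diffeomorphism intertwining the left $N$-action on $U$ with left translation on the first factor (and the trivial action on the second). Hence $\Sc(U)\cong \Sc(N)\hot\Sc(U/N)$ as $N$-modules. Applying $(-)_{(N)}$ and using nuclearity of $\Sc(U/N)$ together with Lemma \ref{lem:fin} to commute generalized coinvariants past the completed tensor product, one obtains
$$\Sc(U)_{(N)}\cong \Sc(N)_{(N)}\hot\Sc(U/N).$$
Dualizing via Lemma \ref{lem:co*} gives
$$\Sc^*(U)^{(N)}\cong \Sc(U)_{(N)}^*\cong \Sc(N)_{(N)}^*\hot\Sc^*(U/N)\cong \Sc^*(N)^{(\fn)}\hot\Sc^*(U/N),$$
where the middle step uses that strong duals distribute over completed tensor products of nuclear Fr\'echet spaces. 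Now Lemma \ref{lem:pol} identifies $\Sc^*(N)^{(\fn)}\cong\C[N]$, and since $\C[N]$ is a strict inductive limit of its finite-dimensional subspaces of polynomials of bounded degree, $\C[N]\hot\Sc^*(U/N)=\C[N]\otimes\Sc^*(U/N)$. Tracing through the identifications shows the resulting isomorphism is inverse to the natural map in the statement (given by the choice of section, which realizes $p\otimes\eta$ as the distribution $(p\cdot dn)\times\eta$ on $U\cong N\times U/N$). Alternatively, one may formulate Step~2 intrinsically by applying Lemma \ref{lem:GenFrob} to $V=\Sc(U/N)$ viewed as a $U$-module through the quotient $U\to U/N$, which gives the same factorization without explicit coordinates.

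The main technical obstacle is the factorization of generalized coinvariants in the second paragraph: one must verify
$$\overline{I_N(\Sc(N)\hot\Sc(U/N))}=\overline{I_N\Sc(N)}\hot\Sc(U/N),$$
so that the quotient factorizes as claimed. This rests on Lemma \ref{lem:fin}, which ensures that each $\fn^i\Sc(N)$ has finite codimension and therefore closed image in $\Sc(N)$, combined with nuclearity of $\Sc(U/N)$ to transfer this closedness through the tensor product and the intersection defining $I_N$. Everything else in the argument is formal.
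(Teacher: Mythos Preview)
Your argument is correct and follows essentially the same route as the paper: compute $\Sc(U)_{(N)}$, dualize via Lemma \ref{lem:co*}, and apply Lemma \ref{lem:pol}. The only difference is packaging: where you choose a Nash section to obtain $\Sc(U)_{(N)}\cong\Sc(N)_{(N)}\hot\Sc(U/N)$ and then discuss the technical point of commuting $(\,\cdot\,)_{(N)}$ past $\hot\,\Sc(U/N)$, the paper absorbs this step into Lemma \ref{lem:GenFrobN}/\ref{lem:GenFrob}, writing the target as $\Sc(U)\hot_N\Sc(N)_{(N)}$ and declaring the natural map to be its dual. Your ``alternative'' formulation via Lemma \ref{lem:GenFrob} is thus exactly what the paper does, and your explicit section merely unwinds the proof of that lemma in this special case.
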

\begin{proof}
By Lemmas \ref{lem:co*} and \ref{lem:pol} we have $(\Sc(N)_{(N)})^*\cong\Sc^*(N)^{(\fn)}\cong  \C[N]$.  Thus the map in question is dual to the map $\Sc(U)_{(N)}\to \Sc(U)\hot_N\Sc(N)_{(N)}$
which is an isomorphism by Lemma \ref{lem:GenFrob}.
\end{proof}

\DimaA{
For any unitary character $\chi$ of $N$ and any $V\in \Rep^{\infty}(N)$, we define twisted generalized co-invariants $V_{(N,\chi)}:=(V\otimes \chi^{-1})_{(N)}$. 

\begin{lem}\label{lem:Mod}
Let $G$ be a real algebraic group that includes $N$ as an algebraic subgroup.
Let $\fg$ be the Lie algebra of $G$. Suppose that 
for every $X\in \fn$, $ad(X)$ is a nilpotent operator on $\fg$. Let $\chi$ be a unitary character of $N$. Let $V\in \Rep^{\infty}(G)$. Then  $V_{(N,\chi)}$ has a natural $\fg$-module structure.
\end{lem}
\begin{proof}
Let $\fn_{\chi}:=\{X-d\chi(X)\, \vert X\in \fn\}\subset \cU(\fn)$ and let $k$ be the natural number such that $ad(X)^k=0$ for any $X\in \fn$. Then $\fg\fn_{\chi}^i\cU(\fg)\subset \fn_{\chi}^{i-k}\cU(\fg)$ for any $i>k$. Then $\fg\overline{\fn_{\chi}^iV}\subset \overline{\fn_{\chi}^{i-k}V}$ for any $i>k$. Thus the action of $\fg$  on $\lim\limits_{\ot} V/\overline{\fn_{\chi}^iV}$ is well defined. Finally, $V_{(N,\chi)}\cong \lim\limits_{\ot} V/\overline{\fn_{\chi}^iV}$ by Corollary \ref{cor:lim}.
\end{proof}
}

 \begin{lemma}\label{lem:locnil}
Let $N\subset L$ be nilpotent such that $N$ is normal in $L$ with commutative quotient. Let $V\in \Sc\mod(L)$. Suppose that $\fl$ acts locally nilpotently on $(V_N)^*$. Then $\fl$ acts locally nilpotently on $(V_{(N)})^*$.
\end{lemma}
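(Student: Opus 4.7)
The plan is to exploit the description $(V_{(N)})^* = \bigcup_{i \geq 1} F^i$ from Lemma~\ref{lem:co*}, where $F^i := \{\xi \in V^* \mid \fn^i \xi = 0\}$, noting that $F^1 = (V_N)^*$. Since local nilpotency passes to arbitrary unions of invariant subspaces, it suffices to prove local nilpotency of $\fl$ on each $F^i$, and I will do so by induction on $i$; the base case $i=1$ is precisely the hypothesis.

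For the inductive step I first record two basic compatibilities that follow from $\fn$ being an ideal of $\fl$ (which in turn uses that $N$ is normal in $L$): (a) $\fl$ preserves each $F^i$, and (b) $\fn$ sends $F^i$ into $F^{i-1}$. For (a), the computation is that for $X \in \fl$ and $Y_1, \ldots, Y_i \in \fn$, repeated use of $Y_j X = X Y_j - [X, Y_j]$ together with $[X, Y_j] \in \fn$ yields
\[
Y_1 \cdots Y_i \, X \;=\; X\, Y_1 \cdots Y_i \;-\; \sum_{j=1}^{i} Y_1 \cdots Y_{j-1} [X, Y_j] Y_{j+1} \cdots Y_i,
\]
and every term on the right lies in $\cU(\fl) \cdot \fn^i$, so $\fn^i \xi = 0$ forces $\fn^i(X\xi) = 0$. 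Statement (b) is immediate from $\fn \cdot \fn^{i-1} \subseteq \fn^i$.

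These two facts let me define on the graded piece $\mathrm{gr}^i := F^i / F^{i-1}$ an injective $\fl$-equivariant map
\[
\mathrm{gr}^i \;\hookrightarrow\; \Hom(\fn^{\otimes (i-1)}, (V_N)^*), \qquad \bar\xi \longmapsto \bigl(Y_1 \otimes \cdots \otimes Y_{i-1} \;\mapsto\; Y_1 \cdots Y_{i-1} \xi\bigr),
\]
where $\fl$ acts on $\fn$ by the adjoint representation (hence on tensor powers by derivations) and on $(V_N)^*$ by the original action. The equivariance is a direct consequence of the commutator identity above, and injectivity follows because $\bar\xi$ mapping to zero means $\fn^{i-1} \xi = 0$, i.e.\ $\xi \in F^{i-1}$. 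Since this section concerns nilpotent groups, $\fl$ is nilpotent as a Lie algebra, so by Engel's theorem its adjoint action on the finite-dimensional space $\fn^{\otimes(i-1)}$ is nilpotent. Combined with the hypothesis that $\fl$ acts locally nilpotently on $(V_N)^*$, a routine Leibniz-rule argument on elementary tensors gives local nilpotency on the Hom space, and hence on $\mathrm{gr}^i$. Splicing this with the inductive hypothesis on $F^{i-1}$ completes the induction.

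The main technical point I expect to be delicate is checking that the action on $\mathrm{gr}^i$ induced from $V^*$ coincides, under the embedding above, with the natural tensor-type $\fl$-action on $\Hom(\fn^{\otimes(i-1)}, (V_N)^*)$; this is an exercise in tracking the commutator terms produced by the PBW-style expansion, and is precisely where normality of $N$ in $L$ is used. A minor secondary subtlety is that, depending on whether ``locally nilpotent'' is read in the strong sense (for every $\xi$ some power $\fl^k$ annihilates $\xi$) or the weak sense (each $X \in \fl$ acts locally nilpotently), the splicing step requires a short bookkeeping argument, but the proof goes through under either convention.
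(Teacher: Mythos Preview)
Your proof is correct but takes a different route from the paper's. The paper invokes Lemma~\ref{lem:GenFrobN} to identify $(V_{(N)})^*$ with $(\Sc^*(N)^{(\fn)}\hot V^*)^N \cong \Sc^*(N)^{(\fn)}\hot (V_N)^*$, and then uses Lemma~\ref{lem:pol} to rewrite the first factor as the polynomial space $\C[N]$; local nilpotency then follows because $\fl$ acts nilpotently on $\C[N]$ (nilpotency of $L$) and locally nilpotently on $(V_N)^*$ (hypothesis). You instead work directly with the exhaustive filtration $F^i$ supplied by Lemma~\ref{lem:co*} and embed each graded piece $\mathrm{gr}^i$ into $\Hom(\fn^{\otimes(i-1)},(V_N)^*)$, reducing to the same tensor-product argument one degree at a time. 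The two arguments are closely related---the degree filtration on $\C[N]$ has associated graded $\Sym(\fn^*)$, whose pieces sit inside the duals of your $\fn^{\otimes(i-1)}$---but yours is more elementary in that it bypasses the analytic identifications of Lemmas~\ref{lem:GenFrobN} and~\ref{lem:pol}, needing only Lemma~\ref{lem:co*} and PBW-type bookkeeping. The paper's version is shorter and more conceptual once those lemmas are available. Neither argument actually uses the commutativity of $L/N$, so that hypothesis appears to be superfluous for this lemma as stated.
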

\begin{proof}
By Lemma \ref{lem:GenFrobN}, it is enough to show that $\fl$ acts locally nilpotently on the space $(\Sc^*(N)^{(N)}\hot V^*)^N\cong \Sc^*(N)^{(N)}\hot (V_N)^*$. By Lemma \ref{lem:pol} this is isomorphic to $\C[N]\hot (V_N)^*$. Since $\fl$ acts locally nilpotently on $(V_N)^*$ by the assumption, and on $\C[N]$ by the nilpotency of $L$, the lemma follows.
\end{proof}

\begin{lemma}\label{lem:SvN}
Let $H=W\times Z$ be a Heisenberg group, and let $L,R\subset W$ be Lagrangian subspaces. Let $\chi$ be a non-trivial unitary character of $Z$. Then  $\Sc(H)_{(L\times Z,\chi)}\simeq \Sc(H)_{(R\times Z,\chi)}$ as representations of $H$, where $H$ acts on itself by left shifts, and $L$,$R$ and $Z$ act on $H$ by right shifts.
\end{lemma}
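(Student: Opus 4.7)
My plan is to use the categorical equivalence of Theorem \ref{thmB} to reduce the problem to identifying both sides as realizations of a single canonical representation in $\Rep^{\infty}(H)^{\times}$. The natural candidate is $I(\varpi_{(\chi)})$, where $\varpi_{(\chi)} \in \Rep^{\infty}(Z)^{\times}$ is the power-series-at-$\chi$ representation from the example following Corollary \ref{cor:w}; by Theorem \ref{thmB} this representation does not depend on the choice of Lagrangian used to realize $I$, a fact already recorded at the level of $\varpi$ itself in Proposition \ref{prop:LagIndep}.

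First I would verify $\Sc(H)_{(L\times Z,\chi)} \in \Rep^{\infty}(H)^{\times}$. Since $Z$ is central in $H$, the residual left $Z$-action on the generalized coinvariants coincides with the (multiplication-by-$\chi$-deformed) right action, and Lemma \ref{lem:co*} applied in the $Z$-direction identifies this $Z$-module structure with $\varpi_{(\chi)}$, which is non-singular.

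Next I would establish the key identification
\[
\Sc(H)_{(L\times Z,\chi)} \simeq \ind_{L\times Z}^H E(\varpi_{(\chi)}).
\]
Two approaches are natural. The first uses Lemma \ref{lem:GenFrobN} in its twisted form (the same proof goes through) to write the left-hand side as $\Sc(L\times Z)_{(L\times Z,\chi)}\hot_{L\times Z}\Sc(H)$, then invokes a Schwartz-induction-by-tensoring identity $\Sc(H)\hot_{L\times Z}V\simeq \ind_{L\times Z}^H V$ in the spirit of Lemma \ref{lem:Frob}, reducing matters to Fourier-analyzing $\Sc(L\times Z)_{(L\times Z,\chi)}$ and showing it collapses to $E(\varpi_{(\chi)})$ after the tensor product. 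The second, more robust, approach applies the functor $C$ from Theorem \ref{thmB} to $\Sc(H)_{(L\times Z,\chi)}$ directly: by the proof of Theorem \ref{thmB} we have $C(\tau)\cong\tau_M$ for any Lagrangian $M$, and choosing $M=L$ one checks by a direct Fourier computation that the further $L$-coinvariants absorb the formal $\widehat{L}$-deformations and return exactly $\varpi_{(\chi)}\in\Rep^{\infty}(Z)^{\times}$.

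Once $\Sc(H)_{(L\times Z,\chi)}\simeq I(\varpi_{(\chi)})$ is established, the same argument with $R$ in place of $L$ yields $\Sc(H)_{(R\times Z,\chi)}\simeq I(\varpi_{(\chi)})$, and the proof of Proposition \ref{prop:LagIndep} applied verbatim to $\varpi_{(\chi)}$ (using the averaging $f\mapsto\check{f}$ with $\check{f}(g)=\int_{r\in R}f(rg)\,dr$) provides the explicit isomorphism between the two realizations. The main obstacle is the Fourier-theoretic bookkeeping in the key identification, especially ensuring that the formal deformations in the $\widehat{L}$ direction of $\Sc(L\times Z)_{(L\times Z,\chi)}$ are correctly absorbed; the second approach via Theorem \ref{thmB} and the functor $C$ circumvents this most of this bookkeeping, at the cost of an appeal to the full equivalence.
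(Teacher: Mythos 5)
Your reduction via Theorem~\ref{thmB} is the right idea, and to that extent your strategy matches the paper's, but the key identification on which your whole argument rests, namely
\[
\Sc(H)_{(L\times Z,\chi)}\ \simeq\ \ind_{L\times Z}^{H}E(\varpi_{(\chi)})\ =\ I(\varpi_{(\chi)}),
\]
is false, and neither of your two routes repairs it. You correctly sense that the formal deformations in the $\widehat{L}$-direction may cause trouble, but the truth is that they are \emph{not} absorbed: they survive. To see this, choose a Lagrangian $L'$ complementary to $L$ and apply $C$, realized as $\tau\mapsto\tau_{L'}$. Since $L'\backslash H\cong L\times Z$ compatibly with the right $L\times Z$-action, one finds
\[
\left(\Sc(H)_{(L\times Z,\chi)}\right)_{L'}\ \cong\ \Sc(L\times Z)_{(L\times Z,\chi)}\ \cong\ \Sc(L)_{(L)}\hot \varpi_{(\chi)},
\]
which via Fourier transform is a space of formal power series in $\dim L+1$ variables (at the point $(0,\chi)\in\widehat{L}\times\widehat{Z}$), with $Z$ acting only on the $\varpi_{(\chi)}$ factor. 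This is strictly larger than $\varpi_{(\chi)}$ whenever $\dim W>0$. Since $C$ is an equivalence, $\Sc(H)_{(L\times Z,\chi)}\cong I\left(\Sc(L)_{(L)}\hot\varpi_{(\chi)}\right)$, not $I(\varpi_{(\chi)})$; your second approach via $C$ runs into exactly the same obstruction rather than circumventing it.

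The fix, and what the paper actually does, is not to name the induced model at all. Fix a Lagrangian $L'$ complementary to \emph{both} $L$ and $R$; by Theorem~\ref{thmB} (with $C$ realized via $L'$) it suffices to produce a $Z$-equivariant isomorphism between the $L'$-coinvariants of the two sides. The computation above gives $\Sc(L\times Z)_{(L\times Z,\chi)}$ for the $L$-side and $\Sc(R\times Z)_{(R\times Z,\chi)}$ for the $R$-side, and any linear isomorphism $L\to R$ extended by the identity on $Z$ yields the required $Z$-equivariant isomorphism. If you want to keep your formulation, the correct version is $\Sc(H)_{(L\times Z,\chi)}\cong I\left(\Sc(L)_{(L)}\hot\varpi_{(\chi)}\right)$; Lagrangian-independence then still follows because $\Sc(L)_{(L)}$ depends only on $\dim L=\dim R$, but the extra factor must be carried along rather than discarded.
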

\begin{proof}
Let $L'\subset W$ be a Lagrangian subgroup complementary to both $L$ and $R$.
By Theorem \ref{thmB}, it is enough to establish an isomorphism between coinvariants of  $\Sc(H)_{(L\times Z,\chi)}$ and $\Sc(H)_{(R\times Z,\chi)}$
under $L'$. Since $L'\backslash H\cong L\times Z$, $(\Sc(H)_{(L\times Z,\chi)})_{L'}\cong \Sc(L\times Z)_{(L\times Z,\chi)}$. Similarly, $(\Sc(H)_{(R\times Z,\chi)})_{L'}\cong \Sc(R\times Z)_{(R\times Z,\chi)}$. Since the groups $L\times Z$ and $R\times Z$ are isomorphic, we have $\Sc(L\times Z)_{(L\times Z,\chi)}\cong \Sc(R\times Z)_{(R\times Z,\chi)}$. The lemma follows.
\end{proof}

\begin{prop}[{\cite[Proposition 3.0.1]{GGS2}}]\label{prop:LocNilp}
Let $P_2(\R)$ denote the group of affine transformations of the line. Let $N$ denote the unipotent
radical of $P_2(\R)$. Let $B_0$ denote the connected component
of the identity in $P_2(\R)$. Let $V_2\in \Rep^{\infty}(B_0)$. Suppose that $V$ is not generic, i.e. $V_{N,\psi}  = 0$ for
any non-trivial unitary character  $\psi$ of $N$. Then the  Lie algebra  of $N$ acts locally nilpotently on $V^*$.
\end{prop}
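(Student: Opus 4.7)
The plan is to reformulate the conclusion via Lemma~\ref{lem:co*}, which identifies the space of $\mathfrak{n}$-locally nilpotent functionals in $V^*$ with $V_{(N)}^*$. Hence local nilpotence of $\mathfrak{n}$ on $V^*$ is equivalent to the equality $V = V_{(N)}$, i.e.\ $\overline{I_N V}=0$, where $I_N=\bigcap_i \mathfrak{n}^i \Sc(N)$. I would in fact establish the stronger identity $I_N V = 0$. To set this up, I would first pass via the Fourier isomorphism $\Sc(N)\cong\Sc(\widehat N)$ to view the restriction $V|_N$ as a differentiable $\Sc(\widehat N)$-module; under this identification Theorem~\ref{thm:U} identifies $I_N$ with the subalgebra $\Sc(\widehat N^\times)\subset \Sc(\widehat N)$ of Schwartz functions flat at $0$, and the twisted coinvariant $V_{N,\psi}$ is precisely the fiber of $V$ at $\psi\in \widehat N$. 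The non-genericity hypothesis then reads: the fiber of $V$ vanishes at every $\psi\in \widehat N^\times$, and the goal becomes $\Sc(\widehat N^\times)\cdot V = 0$.

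The main step uses the $A_0$-structure. The group $A_0\cong\R_{>0}$ acts on $\widehat N\cong\R$ by scaling, partitioning $\widehat N^\times$ into two open $B_0$-orbits $\widehat N_\pm$, each with stabilizer $N$. Applying the imprimitivity equivalence (Theorem~\ref{thm:imprimitivity0}) to $B_0$ acting on each $\widehat N_\pm$, the Schwartz submodule $\Sc(\widehat N_\pm) V$ corresponds to an $N$-representation on a fiber $V_{N,\psi_\pm}$, which vanishes by hypothesis; hence $\Sc(\widehat N_\pm)V=0$. Now fix $\phi\in\Sc(\widehat N^\times)$, $v\in V$, and $\varepsilon>0$, and choose a smooth partition of unity $1=\phi_+ + \phi_- + \phi_\varepsilon$ on $\widehat N$ subordinate to the open cover $\{\widehat N_+, \widehat N_-, (-\varepsilon,\varepsilon)\}$. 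Writing $\phi\cdot v = (\phi_+\phi)\cdot v + (\phi_-\phi)\cdot v + (\phi_\varepsilon\phi)\cdot v$, the first two terms vanish since $\phi_\pm\phi\in \Sc(\widehat N_\pm)$. For the third, the flatness of $\phi$ at $0$ combined with $\supp(\phi_\varepsilon\phi)\subset(-\varepsilon,\varepsilon)$ forces every Schwartz seminorm of $\phi_\varepsilon\phi$ to be $O(\varepsilon^k)$ for all $k$, so $(\phi_\varepsilon\phi)\cdot v\to 0$ in $V$ as $\varepsilon\to 0$ by continuity of the action. Since $\phi\cdot v$ does not depend on $\varepsilon$ and $V$ is Hausdorff, $\phi\cdot v=0$.

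This gives $\Sc(\widehat N^\times)\cdot V=0$, hence $I_N V = 0$, hence $V = V_{(N)}$; Lemma~\ref{lem:co*} then yields $V^* = V_{(N)}^* = \{\xi\in V^* : \mathfrak{n}^i\xi =0 \text{ for some } i\}$, which is precisely the desired local nilpotence. The main obstacle is the partition-of-unity step in the second paragraph: the $A_0$-equivariance is essential to realize $\widehat N_\pm$ as homogeneous $B_0$-spaces with stabilizer $N$ (so that Theorem~\ref{thm:imprimitivity0} converts fiber vanishing into submodule vanishing), and the Schwartz decay of $\phi$ near $0$ is what controls the residual term. Without the $B_0$-structure one retains only the $\Sc(\widehat N)$-module picture, which by itself does not force exact (as opposed to closure-) vanishing of $\Sc(\widehat N^\times)\cdot V$.
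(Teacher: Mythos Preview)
The paper does not prove this proposition; it is quoted from \cite{GGS2} and used as a black box in the proof of Theorem~\ref{thm:proWhit}. So there is no ``paper's own proof'' to compare against. That said, your argument is essentially correct and has the virtue of being self-contained within the framework of this paper (Lemma~\ref{lem:co*}, Theorem~\ref{thm:U}, and the imprimitivity theorem).

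Two remarks. First, the partition-of-unity step with the $\varepsilon$-limit is unnecessary. Since $\widehat N^\times=\widehat N_+\sqcup\widehat N_-$ is a disjoint union of open Nash submanifolds, one has $\Sc(\widehat N^\times)=\Sc(\widehat N_+)\oplus\Sc(\widehat N_-)$ directly (a function flat at $0$ restricts to a Schwartz function on each half-line, again flat at $0$). Hence once you know $\Sc(\widehat N_\pm)V=0$, you get $\Sc(\widehat N^\times)V=0$ immediately, with no residual term to estimate.

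Second, in your application of Theorem~\ref{thm:imprimitivity0} you should be a little more careful about which object you apply it to. The set $\Sc(\widehat N_+)V$ need not carry an obvious Fr\'echet topology, so it is not literally an object of $\Sc\mod(B_0,\widehat N_+)$. The clean fix is to apply the theorem to the Fr\'echet quotient $W/K$, where $W=\Sc(\widehat N_+)\hot V$ (with $\Sc(\widehat N_+)$ acting on the first factor and $B_0$ diagonally) and $K=\ker(\alpha\colon W\to V)$ is closed. Then $W/K\in\Sc\mod(B_0,\widehat N_+)$, its fiber at $\psi_+$ is $\Sc(\widehat N_+)V/m_{\psi_+}^{(+)}V$, and a two-line computation using $V=m_{\psi_+}V$ and factorization shows this fiber vanishes; the imprimitivity equivalence then gives $W/K=0$, i.e.\ $\Sc(\widehat N_+)V=0$. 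With this adjustment your proof goes through.
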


\begin{remark}
In the notation of Lemma \ref{lem:SvN},  $\Sc(H)_{(L\times Z,\chi)}$ and $\Sc(H)_{(R\times Z,\chi)}$ are not isomorphic as right $\fh$-modules. Indeed, $\fr$ acts locally nilpotently on the dual of $\Sc(H)_{(R\times Z,\chi)}$, but not on the dual of $\Sc(H)_{(L\times Z,\chi)}$. They will become isomorphic if we twist the right action of $\fh$ by an automorphism given by a symplectomorphism of $W$ that maps $L$ to $R$.
\end{remark}

\section{Pro-Whittaker models}\label{sec:Whit}
\DimaA{In this section we prove Theorem \ref{thm:Int_proWhit}. We will use the notation of \S \ref{subsec:IntproWHit}. }
Let $G$ be a real reductive group, and let $(S,\varphi)$ be a Whittaker pair for $G$.
Set $\fu:=\fg^S_{\geq 1}$ and let $\omega_{\varphi}$ denote the anti-symmetric form on $\fu$ given by $\omega_{\varphi}(X,Y):=\varphi([X,Y])$.
Choose a maximal isotropic subspace $\fl \subset \fu$, and let $U:=\Exp(\fu), \, L:=\Exp(\fl)$, and let $\chi$ be the character of $L$ defined by $\varphi$. For any $\pi\in \Sc\mod(G)$, define $\pi_{(S,\varphi)}:=\pi_{(L,\chi)}$.
Let $\pro\cW_{S,\varphi}^L:=\ind_U^G \Sc(U)_{(L,\chi)}\cong \Sc(G)_{(L,\chi)}$.

From the previous section we obtain:
\begin{lemma}\label{lem:SphiInd}
$\pi_{(S,\varphi)}\cong\pi\hot_G \pro\cW^L_{S,\varphi}\cong \pi \hot_G \ind_U^G  \Sc(U)_{(L,\chi)}$.
\end{lemma}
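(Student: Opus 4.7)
The second isomorphism in the statement is immediate from the definition $\pro\cW^L_{S,\varphi}:=\ind_U^G \Sc(U)_{(L,\chi)}$, so the content is the first isomorphism $\pi_{(S,\varphi)}\cong\pi\hot_G \pro\cW^L_{S,\varphi}$. My plan is to derive it by combining a twisted version of Lemma \ref{lem:GenFrob} with Frobenius reciprocity (Lemma \ref{lem:Frob}), matching the strategy the section has been building toward.

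First I would apply Lemma \ref{lem:GenFrob} with $G$ replaced by $U$, the subgroup $N$ replaced by $L$, and $V$ replaced by $\pi|_U$, in its twisted form, to obtain
$$\pi_{(L,\chi)} \;\cong\; \pi \hot_U \Sc(U)_{(L,\chi)}.$$
The twisted analogue follows from the stated Lemma \ref{lem:GenFrob} by writing $\pi_{(L,\chi)}=(\pi\otimes\chi^{-1})_{(L)}$ (using that $\chi$ is a unitary character of $L$), applying the untwisted lemma to $\pi\otimes\chi^{-1}$, and then transferring the $\chi^{-1}$ across the $\hot_U$ tensor product onto the $\Sc(U)$ factor, which converts $\Sc(U)_{(L)}$ into $\Sc(U)_{(L,\chi)}$ for the right-translation $L$-action. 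The fact that this manipulation is compatible with the $U$-module structure on $\Sc(U)_{(L,\chi)}$ is guaranteed by Lemma \ref{lem:Mod}, since $L$ acts on $\fu$ by nilpotent operators (being contained in the nilpotent group $U$).

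Next I would invoke Frobenius reciprocity (Lemma \ref{lem:Frob}) with $H=U$ and $\tau=\Sc(U)_{(L,\chi)}$. The modular factor $\delta_U\delta_G^{-1}$ is trivial, because $U$ is nilpotent (hence unimodular) and $G$ is reductive (hence unimodular). This yields
$$\pi \hot_U \Sc(U)_{(L,\chi)} \;\cong\; \pi \hot_G \ind_U^G \Sc(U)_{(L,\chi)} \;=\; \pi \hot_G \pro\cW^L_{S,\varphi}.$$
Chaining the two isomorphisms gives $\pi_{(S,\varphi)} = \pi_{(L,\chi)} \cong \pi\hot_G \pro\cW^L_{S,\varphi}$, as desired.

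The only mild obstacle is bookkeeping the twisted form of Lemma \ref{lem:GenFrob}; once one notes that the proof of Lemma \ref{lem:GenFrobN} (via the two commuting $N$-actions on $\Sc(N)\hot V$) is insensitive to replacing $V$ by $V\otimes\chi^{-1}$, this is routine, and no new analytic input is needed. Everything else is formal application of Frobenius reciprocity and the definitions.
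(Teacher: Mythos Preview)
Your proposal is correct and matches the paper's intended argument. The paper gives no explicit proof, simply writing ``From the previous section we obtain'' the lemma; the tools from that section are precisely the (twisted) Lemma~\ref{lem:GenFrob} and Frobenius reciprocity (Lemma~\ref{lem:Frob}), which you apply in exactly the way the paper has set up---indeed the paper's preceding identification $\pro\cW_{S,\varphi}^L:=\ind_U^G \Sc(U)_{(L,\chi)}\cong \Sc(G)_{(L,\chi)}$ is itself the combination of these two steps applied to $\pi=\Sc(G)$.
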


\begin{lemma}\label{lem:swap}
$\Sc(U)_{(L,\chi)}$ does not depend on the choice of $L$ as a \Fre space.
\end{lemma}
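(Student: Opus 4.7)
The plan is to identify $\Sc(U)_{(L,\chi)}$ as a Fr\'echet space depending only on the two integers $\dim U$ and $\dim L$. The lemma will then follow because all maximal isotropic subspaces of $\fu$ with respect to $\omega_\varphi$ share the common dimension $(\dim\fu + \dim\fr)/2$, where $\fr\subset\fu$ is the radical of $\omega_\varphi$.

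First, I fix a linear complement $\fm$ of $\fl$ in $\fu$. The exponential coordinate map $\fm\times L\to U$, $(X,\ell)\mapsto\Exp(X)\ell$, is a Nash diffeomorphism and intertwines right translation by $L$ on $U$ with right translation on the second factor alone. Passing to Schwartz functions yields
$$\Sc(U) \;\cong\; \Sc(\fm)\hot \Sc(L)$$
as right $L$-modules. Since $\Sc(\fm)\hot(-)$ is exact on nuclear Fr\'echet spaces, this identification descends to
$$\Sc(U)_{(L,\chi)} \;\cong\; \Sc(\fm)\hot \Sc(L)_{(L,\chi)},$$
and the factor $\Sc(\fm)\cong\Sc(\R^{\dim\fm})$ manifestly depends only on $\dim\fm=\dim U-\dim L$.

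For the remaining factor I will use Lemma \ref{lem:co*} to pass to the dual: $(\Sc(L)_{(L,\chi)})^*\cong\Sc^*(L)^{(\fl,\chi)}$, the tempered distributions on $L$ on which $\fl$ acts locally nilpotently with twist $\chi$. Multiplication by $\chi$ is a topological automorphism of $\Sc^*(L)$ intertwining the twisted and untwisted actions, and by Lemma \ref{lem:pol} we obtain $\Sc^*(L)^{(\fl,\chi)}\cong\chi\cdot\C[L]$. In exponential coordinates the action of any $X\in\fl$ on a polynomial on $L$ equals $\partial_X$ modulo strictly lower-order Baker--Campbell--Hausdorff corrections, so the kernels of the iterates of $\fl$ match polynomial degree and, using Lemma \ref{lem:fin}, the finite-dimensional quotients $\Sc(L)/I^i$ have the abelian dimensions $\binom{d+i-1}{d}$ with $d=\dim L$. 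Dualising, $\Sc(L)_{(L,\chi)}$ as a Fr\'echet space is isomorphic to $\C[[t_1,\ldots,t_d]]$, which depends only on $d$.

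The main obstacle is this last step: making precise that the BCH correction is strictly lower order in the polynomial filtration, and that both the twist by $\chi$ and the Lie structure of $\fl$ enter the inverse-limit Fr\'echet topology only through $\dim L$. Once this is granted, combining the two factors shows that $\Sc(U)_{(L,\chi)}$ is determined as a Fr\'echet space by the pair $(\dim U,\dim L)$, and the constancy of $\dim L$ across maximal isotropic subspaces completes the proof.
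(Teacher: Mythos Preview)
Your approach differs substantially from the paper's. The paper first quotients by $K_\chi$, the kernel of $\chi$ on the radical $V$ of $\omega_\varphi$, obtaining $\Sc(U)_{(L,\chi)}\cong\C[[K_\chi]]\hot\Sc(U/K_\chi)_{(L,\chi)}$ via Corollary~\ref{cor:NUPol}; since $U/K_\chi$ is a Heisenberg group, independence of the Lagrangian is then supplied by Lemma~\ref{lem:SvN}, which ultimately rests on the Stone--von Neumann equivalence of Theorem~\ref{thmB}. You instead peel off a complement $\fm$ of $\fl$ and try to identify $\Sc(L)_{(L,\chi)}$ directly as a Fr\'echet space depending only on $\dim L$.

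There is a genuine gap in your last step. The claim that the BCH corrections are ``strictly lower order'' in the polynomial filtration is false when $L$ is non-abelian, and in the Whittaker setting $\fl$ need not be abelian (it contains $[\fu,\fu]$ but that is all; for instance when $\varphi=0$ one has $\fl=\fu$). Already for the three-dimensional Heisenberg group with basis $X,Y,Z=[X,Y]$ one has $Z=XY-YX\in\fl^2$ inside $\cU(\fl)$, so any $p\in\Ker\fl^2$ satisfies $\partial_z p=0$; a short computation gives $\dim\Ker\fl^2=3$, not the abelian value $4$. Thus the finite quotients $\Sc(L)/\fl^i\Sc(L)$ do \emph{not} have the abelian dimensions, and your identification of $\Sc(L)_{(L,\chi)}$ with $\C[[t_1,\dots,t_d]]$ by matching the filtration pieces does not go through as written. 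The conclusion may still be salvageable --- any nuclear Fr\'echet space arising as an inverse limit of surjections between finite-dimensional spaces of unbounded dimension is abstractly $\C^{\N}$ --- but you would need to argue this directly, and also to show that $\Sc(L)_{(L)}\to\varprojlim\Sc(L)/\fl^i\Sc(L)$ is onto, which the paper only records as an embedding with dense image.

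Finally, note that the paper's route via Lemma~\ref{lem:SvN} actually produces an isomorphism of left $U$-modules, and this equivariance is what is used to build the maps in \eqref{=modSeq}. Even if completed, your argument would yield only an abstract Fr\'echet isomorphism, which is enough for the literal statement of the lemma but not for its application.
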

\begin{proof}

Let $\fv\subset \fu$ denote the radical of $\omega_{\varphi}$, and $V:=\Exp(\fv)$.  Let $K_{\chi}$ denote the kernel of $\chi$ on $V$. By Corollary \ref{cor:NUPol}, $$\Sc(U)_{(K_\chi)}\cong \C[[K_{\chi}]]\hot \Sc(U/K_{\chi}).$$
Now,
$$\Sc(U)_{(L,\chi)}\cong (\Sc(U)_{(K_\chi)})_{(L,\chi)}\cong (\C[[K_{\chi}]]\hot \Sc(U/K_{\chi}))_{(L,\chi)}\cong \C[[K_{\chi}]]\hot(\Sc(U/K_{\chi})_{(L,\chi)})$$
Since $U/K_{\chi}$ is a Heisenberg group, and $\Sc(U/K_{\chi})_{(L,\chi)}$
is the localization at $\chi$ of the generalized oscillator representation, it does not depend on the choice of $L$ by Lemma \ref{lem:SvN}.
\end{proof}
Thus we will sometimes drop the superindex $L$, and denote the pro-Whittaker model corresponding to $(S,\varphi)$ just by $\pro\cW_{S,\varphi}$. For a neutral pair $(h,\varphi)$, we will denote $\pro\cW_{h,\varphi}$ by $\pro\cW_{\varphi}$.
\Dima{As in \S \ref{subsec:IntproWHit}, we define $\WO(\pi)$ to be the set of all nilpotent orbits $O$ such that $\pi_{(\varphi)}\neq 0$ for any $\varphi\in O$, and by $\WS(\pi)$ the set of maximal elements of $\WS(\pi)$ under the closure order.
This definition is equivalent to the one used in \cite{GGS,GGS2} by the following lemma.
\begin{lem}
The following are equivalent
\begin{enumerate}[(i)]
\item $\pi_{(L,\chi)}=0$\label{proWhit0}
\item $\pi_{L,\chi}=0$\label{Whit0}
\item $\pi_{V,\chi}=0$\label{preWhit0}
\end{enumerate}
\end{lem}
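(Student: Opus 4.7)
The plan is to decompose the three-way equivalence into the two independent equivalences (i)$\Leftrightarrow$(ii) and (ii)$\Leftrightarrow$(iii), and attack them separately.

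For (i)$\Leftrightarrow$(ii), the forward implication is immediate from Lemma \ref{lem:GenFrob}: the evaluation-at-$\chi$ surjection $\Sc(L)_{(L,\chi)}\twoheadrightarrow \C_\chi$ induces $\pi_{(L,\chi)}\cong\pi\hot_L\Sc(L)_{(L,\chi)}\twoheadrightarrow \pi\hot_L\C_\chi\cong \pi_{L,\chi}$. For the converse I would dualize via Lemma \ref{lem:co*}, so that $(\pi_{(L,\chi)})^{*}$ consists of those $\xi\in\pi^{*}$ killed by some power of the shifted subspace $\fl_\chi=\{X-d\chi(X):X\in\fl\}\subset \cU(\fl)$, while $(\pi_{L,\chi})^{*}$ is the subspace killed by $\fl_\chi$ itself. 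Given a nonzero $\xi$ in the former with $\fl_\chi^{i}\xi=0$ for $i$ minimal, either $i=1$ and $\xi\in(\pi_{L,\chi})^{*}$, or some $\eta=Y_1\cdots Y_{i-1}\xi$ with $Y_j\in\fl_\chi$ is nonzero; then $Y_0\eta\in\fl_\chi^{i}\xi=0$ for every $Y_0\in\fl_\chi$, exhibiting a nonzero vector of $(\pi_{L,\chi})^{*}$. This is a direct minimal-counterexample argument and does not require Engel's theorem or any nilpotency assumption on $\fl$.

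For (ii)$\Leftrightarrow$(iii), since the radical $\fv$ of $\omega_\varphi$ is contained in every maximal isotropic subspace, $V\subset L$ and thus $\pi_{L,\chi}$ is a quotient of $\pi_{V,\chi}$, giving (iii)$\Rightarrow$(ii). For the reverse direction I would observe that $K_\chi$ acts trivially on $\pi_{V,\chi}$ and is normal in $U$ (by the very definition of the Heisenberg quotient used in the proof of Lemma \ref{lem:swap}), so $\pi_{V,\chi}$ descends to a representation of $H=U/K_\chi$ with central subgroup $Z=V/K_\chi$ acting by the non-trivial character $\chi'$ induced by $\chi$; in particular $\pi_{V,\chi}\in\Rep^{\infty}(H)^{\times}$. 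The eigenvalue condition $\mathrm{ad}^{*}(S)\varphi=-2\varphi$ forces $\varphi|_{\fg^{S}_{1}}=0$ and $\fg^{S}_{\ge 2}\subset\fv$, which lets me choose a linear lift $\tilde\fl\subset\fg^{S}_{1}$ of $\fl/\fv\subset \fu/\fv$ on which $\chi$ is trivial. Under this lift, $\pi_{L,\chi}\cong(\pi_{V,\chi})_{\tilde L}$, the untwisted coinvariants under the Lagrangian $\tilde L\subset W=H/Z$ of the Heisenberg group. Theorem \ref{thmB} then concludes the argument, since the functor of $\tilde L$-coinvariants is an equivalence $\Rep^{\infty}(H)^{\times}\xrightarrow{\sim}\Rep^{\infty}(Z)^{\times}$ and therefore preserves nonvanishing.

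The main obstacle lies in Step~2, specifically in the bookkeeping needed to realize $\pi_{V,\chi}$ as a nuclear Fréchet, moderate-growth, non-singular $H$-module, to verify that $K_\chi$ is normal in $U$ and that $\chi'$ is well-defined on the center of $H$, and to match the identification $\pi_{L,\chi}\cong(\pi_{V,\chi})_{\tilde L}$ with the equivalence of Theorem \ref{thmB}. Once these topological and character-theoretic compatibilities are settled --- which is where the machinery of Sections \ref{sec:alg}--\ref{sec:cat} is indispensable --- the conclusion follows formally from the equivalence of categories.
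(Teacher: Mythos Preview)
Your proposal is correct and follows essentially the same two-step strategy as the paper: dualize and use a ``nilpotent operators have kernels'' argument for (i)$\Leftrightarrow$(ii), then invoke Theorem~\ref{thmB} for (ii)$\Leftrightarrow$(iii). The paper's proof is extremely terse (two sentences, with apparent typos in the cross-references), whereas you have unpacked the details---in particular the verification that $K_\chi\lhd U$, that $\pi_{V,\chi}$ lands in $\Rep^{\infty}(H)^{\times}$ because $Z=V/K_\chi$ acts by a single nontrivial character, and that $\pi_{L,\chi}\cong(\pi_{V,\chi})_{\tilde L}$ for a Lagrangian $\tilde L$---all of which are exactly the bookkeeping points the paper leaves implicit. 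Your minimal-$i$ argument for (ii)$\Rightarrow$(i) is the concrete content behind the paper's one-line appeal to ``every nilpotent operator has a non-trivial kernel,'' and is arguably cleaner than routing through Lemma~\ref{lem:locnil}.
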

\begin{proof}
The equivalence of \eqref{proWhit0} and \eqref{Whit0} follows from Lemma \ref{lem:locnil} by dualizing the spaces, since every nilpotent operator has a non-trivial kernel.

The equivalence of \eqref{Whit0} and \eqref{preWhit0} follows from Theorem \ref{thmB}.
\end{proof}

 }

\DimaA{Now we would like to prove Theorem \ref{thm:Int_proWhit}, in a way analogous to the proof of \cite[Theorem D]{GGS2}.
Let $(h,\varphi)$ be a neutral Whittaker pair with $[S,h]=0$. 
It exists by \cite[Lemma 3.0.2]{GGS}. Let $z:=S-h$, and for any $t\in [0,1]$ define $S_t=h+tz$. As in \cite[\S 3.2]{GGS}, we define a finite sequence of critical points $t_0=0<t_1<\dots<t_n=1$ by saying that $t$ is critical \Raul{if and only if} there exists $X\in \fg$ such that $[S_t,X]=X$ and $[z,X]\neq 0$.
For each $i$ define $\fu_i:=\fg^{S_{t_i}}_{\geq 1}$. 

Consider the space $\{X\in \fg \mid  [z,X]=0, [S,X]=X\}$. Then the form $\omega_{\varphi}$ restricts to a sympletic form on this space, and we choose a Lagrangian subspace $\fm$.

For every $i$ we define two nilpotent subalgebras $\fl_i,\fr_i\subset \fg$ by 
 \begin{equation}\label{=lt}
 \fl_i:=\fm+(\fu_{t_i}\cap \fg^{Z}_{< 0})+\Ker(\omega_{\varphi}|_{\fu_{t_i}})\text{ and }\fr_t:=\fm+(\fu_t\cap \fg^{Z}_{> 0})+\Ker(\omega_{\varphi}|_{\fu_{t_i}}).
\end{equation}

By \cite[Lemma 3.2.7]{GGS}, the $\fl_i$ and $\fr_i$ are maximal isotropic subspaces of $\fu_i=\fg^{S_{t_i}}_{\geq 1}$, and we have $r_{i}\subset l_{i+1}$ for every $i<n$. Thus we have a natural map 
$\pro\cW_{S_i,\varphi}^{R_{i}}\onto \pro\cW_{S_{i+1},\varphi}^{L_{i+1}}$.

Lemma \ref{lem:swap} gives a sequence of epimorphisms
\begin{equation}\label{=modSeq}
\pro\cW_{\varphi}=\pro\cW_{S_0,\varphi}^{R_0}\onto \pro\cW_{S_1,\varphi}^{L_1}\cong \pro\cW_{S_1,\varphi}^{R_1}\onto \dots \onto \pro\cW_{S_n,\varphi}^{L_n}=\pro\cW_{S,\varphi}.
\end{equation}
By tensoring over $G$ with a representation $\pi\in \Rep^{\infty}(G)$, the sequence \eqref{=modSeq} gives us the sequence of maps
\begin{equation}\label{=modSeqpi}
\pi_{(\varphi)}=\pi_{(R_0,\chi_\varphi)}\to \pi_{(L_1,\chi_\varphi)}\cong \pi_{(R_1,\chi_\varphi)}\to \dots \to \pi_{(L_n,\chi_\varphi)}=\pi_{(S,\varphi)}.
\end{equation}

\begin{thm}\label{thm:proWhit}
Let $\pi\in \Rep^{\infty}(G)$, and let $(S,\varphi)$ be a Whittaker pair with $\Dima{G\cdot}\varphi\in \WS(\pi)$. Then the map $\pi_{(\varphi)}\to \pi_{(S,\varphi)}$ given by \eqref{=modSeqpi} is an \DimaA{isomorphism}.
\end{thm}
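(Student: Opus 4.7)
My plan is to show that every arrow in the chain \eqref{=modSeqpi} is an injection with dense image; the composition will then inherit both properties, which is exactly what the theorem asserts.

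The isomorphisms $\pi_{(L_i,\chi_\varphi)}\cong \pi_{(R_i,\chi_\varphi)}$ at a fixed critical time $t_i$ are the easy arrows. They are immediate from Lemma \ref{lem:swap} and Lemma \ref{lem:SphiInd}: at the level of pro-Whittaker models, swapping the Lagrangian in the Heisenberg quotient of $\fu^{S_{t_i}}$ yields an isomorphism in $\Sc\mod(G)$, and tensoring with $\pi$ over $G$ preserves isomorphisms. This is essentially the whole point of having developed the generalized Stone--von Neumann equivalence (Theorem \ref{thmB}): it is what makes the chain well defined in the non-unitary, pro setting.

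For the deformation arrows $\pi_{(R_i,\chi_\varphi)}\to \pi_{(L_{i+1},\chi_\varphi)}$, where $\fr_i\subset \fl_{i+1}$, I would treat density and injectivity separately. Density is formal: by Frobenius reciprocity (Lemma \ref{lem:Frob}) the map is induced by a surjection $\Sc(G)_{(R_i,\chi_\varphi)}\onto \Sc(G)_{(L_{i+1},\chi_\varphi)}$ obtained by further quotienting by the extra isotropic directions, and the coinvariants functor $\pi\,\hot_G\,(-)$ preserves cokernels of continuous surjections, hence dense images.

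The main obstacle, as in \cite{GGS2}, is injectivity across a critical value; this is where the hypothesis $G\cdot\varphi\in \WS(\pi)$ is used. My plan is to follow the deformation scheme of \cite{GGS2}: first put a $\fg$-module structure on the relevant kernel via Lemma \ref{lem:Mod}, then identify this kernel with a pro-Whittaker-type coinvariant space attached to a Whittaker pair $(S',\varphi')$ whose nilpotent orbit $G\cdot\varphi'$ strictly dominates $G\cdot\varphi$ in the closure order. Concretely, the quotient $\fl_{i+1}/\fr_i$ carries an action of an $\fsl_2$-triple producing a copy of the unipotent radical of $P_2(\R)$, so Proposition \ref{prop:LocNilp} applies: either the generic quotient of the kernel is non-zero, which contradicts the maximality of $G\cdot\varphi$ in $\WS(\pi)$, or the associated Lie algebra acts locally nilpotently on the dual. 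Lemma \ref{lem:locnil} then promotes this local nilpotence from ordinary to generalized coinvariants, forcing the kernel itself to vanish. Combining the four steps---isomorphism of swap arrows, density of deformation arrows, vanishing of kernels of deformation arrows, and composition---yields the theorem.
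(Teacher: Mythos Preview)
Your proposal is essentially correct and follows the same route as the paper: treat the swap arrows via Lemma~\ref{lem:swap}, and handle the deformation arrows by showing that $\fl_{i+1}/\fr_i$ acts locally nilpotently on the relevant dual, using the maximality hypothesis through Proposition~\ref{prop:LocNilp} and then promoting from ordinary to generalized coinvariants via Lemma~\ref{lem:locnil}. The paper does exactly this, citing \cite[\S 4.2]{GGS2} for the local nilpotence on $(\pi^*)^{\fr_i,\chi_\varphi}$.

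Two small points of divergence are worth noting. First, the paper does not separate density and injectivity: once $\fl_{i+1}/\fr_i$ acts locally nilpotently on $(\pi^*)^{(\fr_i,\chi_\varphi)}$, the inclusion $(\pi^*)^{(\fl_{i+1},\chi_\varphi)}\subset (\pi^*)^{(\fr_i,\chi_\varphi)}$ is an equality, so the deformation arrow is an \emph{isomorphism}, not merely an embedding with dense image. Your two-step argument is fine but slightly less sharp. Second, your framing before ``Concretely'' is a detour: neither a $\fg$-module structure on the kernel (Lemma~\ref{lem:Mod}) nor an identification of that kernel with a pro-Whittaker space for a larger orbit is actually used. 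What is used is that the \emph{generic} twisted coinvariants of $\pi_{R_i,\chi_\varphi}$ (not of the kernel) are quasi-Whittaker quotients for strictly larger orbits, hence vanish by maximality; this is precisely the input to Proposition~\ref{prop:LocNilp}. Once you drop that framing, your ``Concretely'' paragraph is the paper's proof.
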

\begin{proof}

We have to show that for every $i<n$, the natural map $\pi_{(R_i,\chi_{\varphi})}\to \pi_{(L_{i+1},\chi_{\varphi})}$ is an isomorphism. For this, it is enough to show that the dual map  is an isomorphism, {\it i.e.}, that every generalized $(R_i,\chi_{\varphi})$-equivariant functional on $\pi$ is, automatically, a generalized $(L_{i+1},\chi_{\varphi})$-equivariant \Raul{functional too}. For this, it is enough to show that  the Lie algebra $\fl_{i+1}/\fr_i$ acts locally nilpotently on $(\pi^*)^{(\fr_i,\chi_{\varphi})}$. \Raul{But then it suffices} to show that for any $\pi\in \Rep^{\infty}(G)$ with $\varphi\in \WS(\pi)$, the Lie algebra $\fl_{i+1}/\fr_i$ acts locally nilpotently on the space of equivariant functionals $(\pi^*)^{\fr_i,\chi_{\varphi}}$.

This is shown in \cite[\S 4.2]{GGS2}, using the condition $\varphi\in \WS(\pi)$, Proposition \ref{prop:LocNilp}, and the notion of quasi-Whittaker coefficient defined in {\it loc. cit.}. More precisely, this follows from \cite[Lemma 4.9, Proposition 4.12 and Corollary 4.20]{GGS2}.
\end{proof}}



\begin{remark}
\begin{enumerate}[(i)]
\item
By Lemma \ref{lem:Mod}, $\pi_{(\varphi)}$ and $\pi_{(S,\varphi)}$ have natural structures of $\fg$-modules. However, they are in general not isomorphic as $\fg$-modules unless $\fr_0\subset \fr_n$. Indeed, the dual $\pi_{\varphi}^*$ has locally finite action of $\fr_0$, \Raul{whereas} $\pi_{(S,\varphi)}^*$
has locally finite action of $\fr_n$.
\item It is easy to see that if we drop the condition  $\varphi\in \WS(\pi)$, the map  $\pi_{(\varphi)}\to \pi_{(S,\varphi)}$ given by \eqref{=modSeqpi} will still be onto, since every map in the sequence is either an isomorphism or a quotient.

However, it will not be a monomorphism in general. For example, let $G=\GL_2(\R)$, $\varphi = 0$ and let  $S$ define the unipotent radical $N$ of a Borel subgroup.
Then for every $\pi$, we have $\pi_{(\varphi)}=\pi$ and $\pi_{(S,\varphi)}=\pi_{(N)}$.
Let $\bar B$ denote the opposite Borel, and let $\pi$ be the principal series representation $\Ind_{\bar B}^G1$.
Then we can view $\pi$ as the  space of infinitely smooth functions on $N$ \DimaA{that are bounded at infinity as well as all their derivatives}. The group $N$ acts on this space by shifts.
Thus the kernel of the map $\pi \to \pi_{(N)}$ includes the space of Schwartz functions that for any $i$ are $i$-th derivatives of Schwartz functions. These are the Fourier transforms of Schwartz functions that are flat at zero.
Note that this kernel does not intersect the Harish-Chandra module of $\pi$.
\end{enumerate}
\end{remark}

\appendix

\section{Generalized Kashiwara's equivalence}\label{app:Kash}

Let $k$ be a field of characteristic zero. Let $(W,\omega)$ be a symplectic vector space over $k$, and let $\fh:=W\oplus k$ be the corresponding Heisenberg Lie algebra. Denote by $Z\cong k$ the center of $\fh$, and by $z$ the element of $Z$ corresponding to $1\in k$.
Let $\fl \subset W$ be a Lagrangian subspace.
\Dima{
As in \S\ref{subsec:A}, we denote by
$\cM_{\fl}(\fh)$  the category of $\fh$-modules on which $\fl$ acts locally nilpotently, by $\cM_{\fl}(\fh)^{\times}$ the subcategory of modules on which $z$ acts invertibly, by $\cM(Z)$ the category of $Z$-modules, and by $\cM(Z)^{\times}$ the subcategory of modules on which $z$ acts invertibly.

In \S\ref{subsec:A} we defined the following pair of functors between these categories:
\[   F:  \cM_{\fl}(\fh)^{\times}\leftrightarrows \cM(\fz)^{\times}: G,\quad  F(M):=M^\fl,\; G(N):=\cU(\fh)\otimes_{\cU(\fp)}E(N). \]

For any $M\in \cM(\fh)$ one has a natural map $\cU(\fh)\otimes_{\cU(\fh \oplus Z)}M\to M$, given by the action map $\cU(\fh)\times M\ \to M$. Restricting to the subspace $F(M)\subset M$, we get a natural map $\alp:G(F(M))\to M$.

Theorem \ref{thmA} says that the functors $F$ and $G$ are quasi-inverses of each other, and thus establish an equivalence of categories.
To prove this we will need
 the following proposition, that we will prove in the next subsection.
\begin{prop}\label{prop:Kash}
For any $M\in \cM_{\fl}(\fh)^{\times}$, the map $\alp$ is an isomorphism.
\end{prop}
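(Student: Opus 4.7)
My plan is to fix a Lagrangian complement $\fl'$ of $\fl$ in $V$ together with dual bases $x_1,\dots,x_n$ of $\fl$ and $y_1,\dots,y_n$ of $\fl'$ satisfying $\omega(x_i,y_j)=\delta_{ij}$. Applying PBW to the decomposition $\fh=\fl'\oplus\fp$ gives a vector space identification $G(N)\cong S(\fl')\otimes N\cong k[y_1,\dots,y_n]\otimes N$. Using the relations $[x_i,y_j]=\delta_{ij}z$ and the triviality of $\fl$ on $E(N)$, a short commutation calculation shows that $x_i$ acts on this model as the ``quantum derivation'' $z\cdot\partial/\partial y_i$, that is, $x_i(y^a\otimes n)=a_i\,y^{a-e_i}\otimes zn$ for any multi-index $a$. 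Since $z$ is invertible on $N$, taking joint kernels of the $x_i$ gives $F(G(N))=1\otimes N$, so the natural unit $N\to F(G(N))$ is already an isomorphism; this settles one half of Theorem~\ref{thmA} directly.

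For the main assertion that $\alp:G(F(M))\to M$ is an isomorphism, I will compare two natural filtrations: the $\fl'$-degree filtration $G_k:=\bigoplus_{|a|<k}y^a\otimes F(M)$ on the source, and the $\fl$-nilpotency filtration $M_k:=\{m\in M:\fl^k m=0\}$ on the target, where $\fl^k$ denotes the $k$-th power of the augmentation ideal of $\cU(\fl)$. The local nilpotency hypothesis gives $M=\bigcup_k M_k$, and the quantum derivation formula yields $\alp(G_k)\subseteq M_k$ immediately. It therefore suffices to prove that each induced map $\bar\alp_k:G_k/G_{k-1}\to M_k/M_{k-1}$ is an isomorphism, for a standard induction on the filtration then transfers the conclusion to $\alp$ itself.

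To analyze the graded pieces, I observe that $G_k/G_{k-1}\cong S^{k-1}(\fl')\otimes F(M)$, while the assignment $m\mapsto(\xi\mapsto\xi m)$ defines an injection $\psi_k:M_k/M_{k-1}\hookrightarrow\Hom(S^{k-1}\fl,F(M))$; the target lands in $F(M)$ because $\fl\cdot\xi m\subset\fl^k m=0$ for $\xi\in\fl^{k-1}$ and $m\in M_k$. Under the symplectic duality $S^{k-1}(\fl')\simeq\Hom(S^{k-1}\fl,k)$ given by the pairing $\langle y^a,x^b\rangle=a!\,\delta_{a,b}$, a direct calculation shows that $\psi_k\circ\bar\alp_k$ becomes multiplication by $z^{k-1}$ on $S^{k-1}(\fl')\otimes F(M)$, hence is an isomorphism since $z$ acts invertibly on $F(M)\subset M$ and $\charc k=0$. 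It follows that $\psi_k$ is surjective (hence an isomorphism), and therefore $\bar\alp_k$ is an isomorphism. Concretely, a preimage of $[m]\in M_k/M_{k-1}$ is given by the explicit formula $\sum_{|b|=k-1}(b!)^{-1}z^{-(k-1)}\,y^b\otimes(x^b m)$.

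The main technical point, and the one place where the condition $z\in\End(M)^{\times}$ is truly indispensable, is the graded surjectivity step: constructing the explicit preimage requires inverting $z^{k-1}$. Everything else is routine bookkeeping with the two filtrations. Conceptually, Proposition~\ref{prop:Kash} may be viewed as Kashiwara's equivalence applied to the localization $\cU(\fh)[z^{-1}]$, a family of Weyl algebras parametrized by values of $z$, with the invertibility of $z$ playing the role of the non-vanishing of Planck's constant in the classical setting.
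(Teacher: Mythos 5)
Your proof is correct, and it takes a genuinely different route from the paper's. The paper isolates a one-variable Key Lemma — for a single dual pair $x\in\fl$, $y\in\fr$ it proves $\Ker x^{i+1}=\bigoplus_{j\le i}y^j\Ker x$ using the operator $yx-iz$ and the identity $[x,y^i]=izy^{i-1}$ — and then bootstraps to the full statement by an induction on $\dim\fl$, building up the nested Heisenberg subalgebras $\fh_j=\langle x_1,\dots,x_j,y_1,\dots,y_j,z\rangle$. You instead treat all $n$ variables simultaneously: you equip $G(F(M))$ with the $\fl'$-degree filtration, equip $M$ with the $\fl$-nilpotency filtration $M_k=\Ker\fl^k$, and reduce everything to the observation that, under the pairing $\langle y^a,x^b\rangle=a!\,\delta_{a,b}$ and the injection $\psi_k:M_k/M_{k-1}\into\Hom(S^{k-1}\fl,F(M))$, the composite $\psi_k\circ\bar\alp_k$ is multiplication by $z^{k-1}$. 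The explicit preimage formula $\sum_{|b|=k-1}(b!)^{-1}z^{-(k-1)}y^b\otimes(x^b m)$ is a nice by-product that the paper's decomposition argument does not exhibit directly. Both approaches turn on exactly the same algebraic facts — the commutator $[x_i,y^a]=a_iz\,y^{a-e_i}$, the invertibility of $z$, local nilpotency to make the filtration exhaustive, and $\charc k=0$ — so the proofs are morally equivalent; yours trades the paper's dimension induction and single-variable eigenspace decomposition for a global associated-graded comparison, which is arguably slicker and avoids the induction on $n$. One small remark: to see that $M=\bigcup_k M_k$ you need that local nilpotency of each $x_i$ on $m$ forces $\fl^N m=0$ for some $N$; this uses commutativity of $\fl$ and is worth a sentence, but it is straightforward.
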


\begin{proof}[Proof of Theorem \ref{thmA}]
By Proposition \ref{prop:Kash}, $\alp$ is an isomorphism $G\circ F\simeq \Id$. For the isomorphism $F\circ G\simeq \Id$, let $N\in \cM(\fz)^{\times}$ and note that  $\fl$ acts locally nilpotently on $G(N)$.  Choose a  Lagrangian $\fr \subset W$ complementary to $\fl$. Note that we have a natural vector space isomorphism
\begin{equation}\label{=TS}
G(N)\cong S(\fr)\otimes_k N \,,
\end{equation} where $S$ denotes the symmetric algebra. This isomorphism is defined by decomposing $\cU(\fh)\cong S(\fr)\otimes_k \cU(\fl\oplus Z)$, and applying $\cU(\fl \oplus Z)$ to $N$ (where $\fl$ acts trivially). Here, we use that $\fl \oplus Z$ is a commutative normal subalgebra of $\fh$. Using \eqref{=TS} we get a natural isomorphism $F(G(N))\cong N$.
\end{proof}

\begin{remark}
Let $\mathfrak{i}\subset \fh$ denote the ideal generated by $z-1$. Then
the universal enveloping algebra $\cU(\fh/\mathfrak{i})$ is isomorphic to the Weyl algebra of $\fl$, {\it i.e.} the algebra of differential operators on $\fl$\ with polynomial coefficients. Thus, restricting Theorem \ref{thmA} to the subcategories on which $z$ acts by 1 we obtain the  affine spaces special case of the famous Kashiwara's equivalence from the theory of D-modules. The Kashiwara's equivalence is a natural equivalence between the category of D-modules on a closed submanifold and  the category of D-modules defined on the ambient space, but supported on the submanifold.
 Our proof also mimics the proof of Kashiwara's equivalence (see {\it e.g.} \cite[Theorem 1.6.1]{HTT}).
\end{remark}
}
\subsection{Proof of Proposition \ref{prop:Kash}}

 We will prove the proposition by induction on $\dim \fl$.
\DimaA{Choose a  Lagrangian $\fr \subset W$ complementary to $\fl$.} Let $x\in \fl$ and $y\in \fr$ such that $[x,y]=z$. Let $M\in \cM_{\fl}(\fh)^{\times}$. For any $i\geq 0$, define two vector subspaces of $M$:
\begin{equation}\label{=Filt}
M_i:=\Ker x^{i+1}, \quad \quad N_i:=y^i \Ker x
\end{equation}

\begin{lemma}[Key lemma]\label{lem:key}For any $i\geq 0$ we have
$M_i=\bigoplus_{j=0}^i N_j$

\end{lemma}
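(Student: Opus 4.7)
The plan is to prove the Key Lemma by exploiting the Heisenberg commutation relation $[x,y]=z$ together with the invertibility of $z$ and local nilpotence of $x$. Everything will follow from a single explicit formula for how $x^k$ acts on $y^j m$ when $xm=0$.

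First I would establish, by a straightforward induction on $k$ using $[x,y]=z$, the identity
\[
x^k y^j m \;=\; \frac{j!}{(j-k)!}\, z^k\, y^{j-k} m \quad \text{for } m \in \Ker x,\; k \le j,
\]
and $x^k y^j m = 0$ for $k > j$. The case $k=1$ is immediate from $xy = yx + z$, and the inductive step uses that $z$ is central. A direct consequence is $x^{j+1}(y^j m) = 0$ for $m \in \Ker x$, so $N_j \subseteq M_j \subseteq M_i$ whenever $j \le i$, which gives one inclusion.

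Next I would prove directness of the sum. Suppose $\sum_{j=0}^{i} y^j m_j = 0$ with $m_j \in \Ker x$. Applying $x^i$ and using the displayed formula, only the $j=i$ term survives, yielding $i!\, z^i m_i = 0$. Since $k$ has characteristic zero and $z$ acts invertibly on $M$, this forces $m_i = 0$; iterating (applying $x^{i-1}$, etc.) gives $m_j = 0$ for all $j$.

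For the inclusion $M_i \subseteq \bigoplus_{j=0}^i N_j$ I would use induction on $i$. The base case $i=0$ is the tautology $M_0 = \Ker x = N_0$. For the inductive step, given $m \in M_i$, I would define
\[
m_i := \tfrac{1}{i!}\, z^{-i}\, x^i m,
\]
which makes sense because $z$ is invertible. Then $x m_i = \tfrac{1}{i!} z^{-i} x^{i+1} m = 0$, so $m_i \in \Ker x$, and by the displayed formula $x^i(y^i m_i) = i!\, z^i m_i = x^i m$. Hence $m - y^i m_i \in M_{i-1}$, and the inductive hypothesis finishes the proof. The only technical point to watch is the order of operations (using that $z$ is central so that $z^{-i}$ commutes with $x$ and $y$), but there is no real obstacle; the invertibility of $z$ is exactly what makes the explicit formula for $m_i$ available.
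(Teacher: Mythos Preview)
Your proof is correct. Both your argument and the paper's proceed by induction on $i$ and rely on the same ingredients (the relation $[x,y]=z$, centrality and invertibility of $z$, characteristic zero), but the execution differs in a mildly interesting way. The paper packages the combinatorics into the auxiliary operator $z^{-1}yx$: it observes that $N_i$ lies in the $i$-eigenspace of this operator (giving directness for free) and that $(yx-iz)M_i\subset M_{i-1}$, from which the inclusion $M_i\subset\bigoplus_{j\le i}N_j$ follows by a short manipulation. Your route is more explicitly constructive: you derive the closed formula $x^ky^jm=\tfrac{j!}{(j-k)!}z^ky^{j-k}m$ once and then read off both directness (by applying $x^i$) and the inclusion (by writing down the top component $m_i=\tfrac{1}{i!}z^{-i}x^im$). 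Your approach has the virtue of producing the explicit projection onto $N_i$, while the paper's eigenspace viewpoint makes the direct-sum statement conceptually transparent without any computation; substantively the two arguments are equivalent.
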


To prove this lemma we will use the following one.

\begin{lemma}\label{lem:filt}
Define $M_{-1}:=N_{-1}:=0$. Then for any $i\geq 0$ we have
\begin{enumerate}[(i)]
\item \label{help:Gp}$(yx-iz)(N_i)=0$.
\item \label{help:x} $xN_{i}\subset N_{i-1}$.
\item \label{help:GM}$N_i\subset M_{i}$.
\item \label{help:MGp}$(yx-iz)(M_{i})\subset M_{i-1}$.
\end{enumerate}
\end{lemma}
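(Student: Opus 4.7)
The plan is to derive all four parts from two commutator identities in $\cU(\fh)$, each obtained by a straightforward induction on $i$ from $[x,y] = z$ combined with the centrality of $z$:
\begin{equation*}
[x, y^i] = i\, y^{i-1} z, \qquad [x^i, y] = i\, x^{i-1} z.
\end{equation*}
These are the only nontrivial algebraic facts needed; everything else is bookkeeping.

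With these in hand, I would handle (i) and (ii) together. Both concern $v \in \Ker x$, and the first identity immediately rewrites $x y^i v$ as $i\, y^{i-1} z v$. Since $z$ is central it preserves $\Ker x$, so $x(y^i v) \in y^{i-1}\Ker x = N_{i-1}$, which gives (ii); the case $i = 0$ is absorbed into the convention $N_{-1} = 0$. Multiplying the same equation on the left by $y$ and pulling $z$ through by centrality yields $yx(y^i v) = iz(y^i v)$, i.e.\ (i).

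Part (iii) is then an immediate iteration of (ii):
\[
x^{i+1} N_i \subset x^i N_{i-1} \subset \cdots \subset x N_0 \subset N_{-1} = 0,
\]
so $N_i \subset \Ker x^{i+1} = M_i$. Part (iv) is the dual calculation: for $v \in M_i$, applying $x^i$ to $(yx - iz)v$ and using the second commutator identity gives
\[
x^i(yx - iz)v = y x^{i+1} v + i\, x^i z v - i\, z x^i v,
\]
in which the last two terms cancel by centrality of $z$ and the first vanishes since $v \in M_i$. Hence $(yx - iz)v \in \Ker x^i = M_{i-1}$.

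There is no substantive obstacle here --- this is a pure $\cU(\fh)$-calculation, and the only care required is the bookkeeping for the boundary cases $i = 0$ via $M_{-1} = N_{-1} = 0$. I would emphasize that the hypothesis that $z$ acts \emph{invertibly} on $M$ is not used in this lemma; it enters only in the subsequent Key Lemma~\ref{lem:key}, where the four facts above are leveraged to decompose $M_i = \bigoplus_{j=0}^i N_j$ (roughly: invertibility of $z$ makes the operator $x^i y^i$ on $\Ker x$ into an isomorphism, by a chain of applications of (i)--(ii)).
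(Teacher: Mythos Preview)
Your proof is correct and essentially the same as the paper's: both rely on the identity $[x,y^i]=izy^{i-1}$ for (i)--(iii), and for (iv) the paper uses the one-step identity $x(yx-iz)=(yx-(i-1)z)x$ iteratively, which is just another way of packaging your identity $[x^i,y]=ix^{i-1}z$. Your observation that invertibility of $z$ is not used in this lemma is also accurate.
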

\begin{proof} For $i=0$ this holds by definition. For $i>0$ we use the identity $[x,y^i]=izy^{i-1}$.
\begin{enumerate}[(i)]
\item $(yx-iz)y^i=y^{i+1}x$, which vanishes on $\Ker x$.
\item $xy^{i}\Ker x=[x,y^i](\Ker x)=izy^{i-1}(\Ker x)=y^{i-1}(\Ker x)=N_{i-1}$.
\item Follows from  \eqref{help:x} by induction on $i$.
\item $x(yx-iz)=-(i-1)xz+yx^2=(yx-(i-1)z)x$. The statement follows now by induction on $i$.
\end{enumerate}
\end{proof}

\begin{proof}[Proof of Key Lemma \ref{lem:key}]
By Lemma \ref{lem:filt}\eqref{help:Gp}, $N_i$ lies in the $i$-eigenspace of the operator $z^{-1}yx$, thus the sum of all $N_i$ is direct. By Lemma \ref{lem:filt}\eqref{help:GM}, $\bigoplus_{j=0}^i N_j\subset M_i$.
Let us now prove the converse inclusion by induction. The base $i=0$ holds by definition. For the induction step let $i> 0$. Then by Lemma \ref{lem:filt}\eqref{help:MGp} and the induction hypothesis we have
\begin{equation}\label{=key}
(yx-iz)M_{i}\subset M_{i-1}=\bigoplus_{j=0}^{i-1} N_j.
\end{equation}
Since $xM_{i}\subset M_{i-1}$, and $yN_{j}=N_{j+1}$, the induction hypothesis implies  $yxM_i\subset  \bigoplus_{j=0}^{i}N_j.$ From this and \eqref{=key} we obtain $izM_i\subset \bigoplus_{j=0}^{i}N_j.$
Since $iz$ is invertible and commutes with $x$ and $y$, $(iz)^{-1}$ preserves $N_j$. Thus $M_i\subset \bigoplus_{j=0}^{i}N_j.$
\end{proof}

\DimaA{\begin{cor}\label{cor:key}
\begin{enumerate}[(i)]
\item $M=\oplus N_i$. \label{it:dsum}
\item $y$ has no kernel on $M$. \label{it:1to1}
\end{enumerate}
\end{cor}
\begin{proof}
\eqref{it:dsum} follows from Lemma \ref{lem:key}. For \eqref{it:1to1} it is enough to show that $y$ has no kernel on $N^i$ for any $i$. Let $m\in N^i$ such that $ym=0$. Then by Lemma \ref{lem:filt}\eqref{help:Gp} we have $$yxm=izm=i(xy-yx)m=-iyxm \Rightarrow (i+1)yxm=0 \Rightarrow yxm=0\Rightarrow zm=0\Rightarrow m=0.$$
\end{proof}
}
\begin{proof}[Proof of Proposition \ref{prop:Kash}]
For any $i\geq 0$ define $M^{(i)}:=S^i(\fl)F(M)$.
We claim that
\begin{equation}
M=\bigoplus_{i\geq 0}M^{(i)}
\end{equation}
and that for any $i$, the linear epimorphism
\begin{equation}
\alp_i: S^i(\fr)\otimes_k F(M)\to M^{(i)}
\end{equation}
given by $\alp$ is an isomorphism.
These two statements imply the proposition.

Both statements follow from \DimaA{Corollary \ref{cor:key}}  by induction on $n=\dim \fl$. For this, we choose a basis $x_1,\dots x_n$ for $\fl$, and let $y_1,\dots y_n\in \fr$ be the dual basis. Then for any $1\leq j \leq n$ we let $\fh_j$ be the subalgebra of $\fh$ generated by $x_1,\dots, x_j,y_1,\dots y_j,z$, and $N_j$ be the $\fh_j$-submodule of $M$ generated by $F(M)$. Then the two statements for $N_j$ follow by induction on $j$ from \DimaA{Corollary \ref{cor:key}}.
\end{proof}
\begin{remark}
The proposition does not hold for infinite-dimensional $\fh$. Indeed, let $W$ be of countable dimension, and let $I$ be the left ideal of $\cU(\fh)$ generated by $\fl$ and by $z-1$. Let $M$ be the direct product of infinitely many copies of $\cU(\fh)/I$. Then, the proposition does not hold for $M$.

We wonder whether the proposition  holds for infinite-dimensional $\fh$, but finitely generated $M$.
\end{remark}


\begin{thebibliography}{999999}

\bibitem[AG08]{AG_Sc} A. Aizenbud, D.
Gourevitch, {\it Schwartz functions on Nash Manifolds,}
International Mathematics Research Notices, Vol. 2008, n.5,
Article ID rnm155, 37 pages. DOI: 10.1093/imrn/rnm155.
 
\bibitem[{AG10}]{AGRhamShap}A. Aizenbud,  D.
Gourevitch: {\it De-Rham theorem and Shapiro lemma for Schwartz functions on Nash manifolds.} Israel Journal of Mathematics  \textbf{177}, Number 1, 155-188 (2010). 

\bibitem[AG13]{AG_ST} A. Aizenbud, D. Gourevitch:
{\it Smooth Transfer of Kloosterman Integrals,} American Journal of Mathematics
{\bf 135}, n. 1, pp. 143-182 (2013). DOI:10.1353/ajm.2013.0000.

\bibitem[CHM00]{CHM}W. Casselman; H. Hecht; D. Mili\v{c}i\'{c}: \textit{Bruhat
filtrations and Whittaker vectors for real groups}. The mathematical legacy of
Harish-Chandra (Baltimore, MD, 1998), 151-190, Proc. Sympos. Pure Math.,
\textbf{68}, Amer. Math. Soc., Providence, RI, (2000).

\bibitem[dCl91]{dCl} F. du Cloux: \emph{Sur les repr\'esentations diff\'erentiables des groupes de Lie alg\'ebriques,}  Ann. Sci. \'Ecole Norm. Sup. (4) \textbf{24}, no. 3, 257--318 (1991).

\bibitem[Fol89]{Fol} G. B. Folland: \emph{Harmonic analysis in phase space. Annals of Mathematics Studies,} \textbf{122}. Princeton University Press, Princeton, NJ  (1989). x+277 pp. ISBN: 0-691-08527-7; 0-691-08528-5.

\bibitem[GGS17]{GGS} R. Gomez, D. Gourevitch, S. Sahi: {\it Generalized and degenerate Whittaker models,} Compositio Mathematica \textbf{153}, n. 2, pp. 223-256 (2017).

\bibitem[GGS21]{GGS2} R.Gomez, D. Gourevitch, S. Sahi: {\it Whittaker supports for representations of reductive groups,} Annales de l'Institut Fourier, {\bf 71} no. 1, pp. 239-286 (2021).
\href{https://doi.org/10.5802/aif.3372}{DOI:10.5802/aif.3372}

\bibitem[HTT08]{HTT} R. Hotta, K. Takeuchi, Kiyoshi, T. Tanisaki:{\it
D-modules, perverse sheaves, and representation theory.}
Translated from the 1995 Japanese edition by Takeuchi. Progress in Mathematics, 236. Birkhauser Boston, Inc., Boston, MA, 2008. xii+407 pp. ISBN: 978-0-8176-4363-8.

\bibitem[Kas70]{Kas}
M. Kashiwara: {\it Algebraic study of systems of partial differential equations.} Master's Thesis, Tokyo Univ., 1970.

\bibitem[LV80]{LV}
G. Lion, M. Vergne: {\it
The Weil representation, Maslov index and theta series,}
Progress in Mathematics {\bf 6}. Birkhauser, Boston, Mass. (1980). vi+337 pp. 

\bibitem[Seg59]{Seg}
I. E. Segal: {\it Foundations of the theory of dynamical systems of infinitely
many degrees of freedom I,} Mat.-Fys. Medd. Dansk. Vid. Selsk. {\bf 31} n. 12 (1959).

\bibitem[Sha62]{Sha} D. Shale, Linear symmetries of free Boson fields, Trans. Amer. Math. Soc. {\bf 103}, pp 149-167 (1962).

\bibitem[Tre67]{Tre} F. Treves: {\it Topological vector spaces, distributions and kernels}, Academic Press, New York-London  xvi+624 pp (1967).

\bibitem[Wall88]{WaJI} N. R. Wallach: \emph{Lie Algebra Cohomology and Holomorphic Continuation of Generalized Jacquet Integrals}, Advanced Studies in Math, vol 14,  123--151 (1988).

\bibitem[Wei64]{Wei} A. Weil: {\it Sur certaines groupes d'operateurs unitaires}, Acta Math. {\bf 111}, pp. 143-211 (1964).

\bibitem[Weis03]{Weis} M. Weissman: {\it The Fourier-Jacobi map and small representations}, Representation Theory {\bf 7}, pp. 275-299 (2003).



\end{thebibliography}
\end{document}